\numberwithin{equation}{section}
\newcommand{\R}{\mathbb{R}}
\newcommand{\E}{\mathbb{E}}
\newcommand{\Pb}{\mathbb{P}}
\newcommand{\Z}{\mathbb{Z}}
\newcommand{\N}{\mathbb{N}}
\newcommand{\C}{\mathcal{C}}
\newcommand{\B}{\mathcal{B}}
\newcommand{\I}{\mathcal{I}}
\newcommand{\Lv}{\mathcal{L}}
\newcommand{\T}{\mathcal{T}}
\newcommand{\X}{\mathcal{X}}
\newcommand{\W}{\mathcal{W}}
\newcommand{\K}{\mathcal{K}}
\newcommand{\be}{\begin{equation}}
\newcommand{\ee}{\end{equation}}
\newtheorem{theorem}{Theorem}[section]
\newtheorem{lemma}[theorem]{Lemma}
\newtheorem{proposition}[theorem]{Proposition}
\newtheorem{corollary}[theorem]{Corollary}
\newtheorem*{assumption(pp)}{Assumption HT}
\newtheorem*{assumption(d)}{Assumption L}
\newtheorem*{assumption(pp*)}{Assumption $\widetilde{\text{HT}}$}
\newtheorem*{assumption(d*)}{Assumption $\tilde{\text{L}}$}
\theoremstyle{definition}
\newtheorem{definition}[theorem]{Definition}
\theoremstyle{remark}
\newtheorem{remark}[theorem]{Remark}
\numberwithin{equation}{section}
\def\deg{\mathop{\mathrm{deg}}\nolimits}
\def\IIC{{\textrm {\tiny IIC}}}
\def\IPC{{\textrm {\tiny IPC}}}
\def\<{\langle}
\def\>{\rangle}
\begin{document}

\title{Backbone scaling limits for random walks on random critical trees}

\author{G\'erard Ben Arous \thanksref{t1}}
\thankstext{t1}{G.B.A.~is supported NSF DMS1209165, BSF 2014019}
\address{G. Ben Arous\\
  Courant Institute of Mathematical Sciences\\
  New York University\\
  251 Mercer Street\\
  New York, NY 10012, USA}

\author{Manuel Cabezas\thanksref{t2}}
\thankstext{t2}{M.C.~is supported by Fondecyt fellowship \#1201090.} 
\address{Manuel Cabezas\\
  Pontificia Universidad Cat\'olica de Chile\\
  Avenida Vicu\~na Mackenna 4860\\
  Santiago, Chile}

\author{Alexander Fribergh\thanksref{t3}}
\thankstext{t3}{A.F.~is supported by NSERC discovery grant and FRQNT jeune chercheur.}
\address{Alexander Fribergh\\Universit\'e de Montr\'eal, DMS\\
Pavillon Andr\'e-Aisenstadt\\     2920, chemin de la Tour Montr\'eal (Qu\'ebec),  H3T 1J4}

\pagestyle{plain}
\begin{abstract}
We prove the existence of scaling limits for the projection on the backbone of the random walks on the Incipient Infinite Cluster and the Invasion Percolation Cluster on a regular tree. We treat these projected random walks as Randomly trapped random walks (as defined in~\cite{rtrw}) and thus describe these scaling limits as spatially subordinated Brownian motions.
\end{abstract}

\maketitle
\section{Introduction}

We study here the sub-diffusive behavior of standard random walks on random critical trees. 
More precisely, we aim to derive and describe scaling limits for these random walks, and relate them to the class of processes called Spatially Subordinated Brownian Motions (SSBM), and the limit theorems introduced in~\cite{rtrw}.
We consider this question on two classes of random subtrees of $\mathbb{T}_2$, the rooted infinite binary tree-graph, namely the Incipient Infinite Cluster (the IIC) and the Invasion Percolation Cluster (the IPC).
Firstly, we consider the case of random walks on the critical percolation cluster on $\mathbb{T}_2$. Following the beautiful early work by Kesten(\cite{Kesten}), we consider the simple random walk on the critical percolation cluster of the root, conditioned to be infinite (the infinite incipient cluster). This random walk is subdiffusive and Barlow and Kumagai (\cite{BarlowKumagai2006}) have established sharp sub-diffusive heat kernel estimates in this context. The IIC~is an infinite tree with a unique simple path to infinity, the backbone. We prove here a scaling limit for the projection of the random walk  on the backbone. We see this projection as a Randomly Trapped Random Walk. This allows us to use the results of~\cite{rtrw}, describe precisely this scaling limit and show that it belongs to the class of SSBMs.
In fact, there is an alternate way to study the random walk on critical percolation clusters, and to understand its sub-diffusivity. We can condition the finite cluster of the root to be of size $N$, and let $N$ tend to infinity. This random tree, properly rescaled, converges to the Continuum Random Tree (CRT) introduced by Aldous (\cite{AldousCRT1}). Furthermore, D. Croydon (\cite{rwrt}) proved that the random walk, properly rescaled, converges to the Brownian motion on the CRT (introduced by Krebs (\cite{krebs1995brownian})). 
We relate these two approaches. First we show how the SSBM scaling limit can be obtained as the projection of the Brownian Motion on the Continuum Random Forest  to its backbone. We then introduce a richer class of processes, the SSBMs on trees, and show that, if one picks $K$ points at random  on the percolation cluster of the root conditioned to be large, and project the random walk on the geodesic tree defined by these $K$ points, the scaling limit exists and belongs to the class of SSBMs on the geodesic tree defined by $K$ points picked at random on the CRT.

Secondly, we can also treat the case of random walks on the invasion percolation cluster (IPC) on $\mathbb{T}_2$. This is a well known case of self organized criticality, see for instance the recent works (\cite{ipc} and \cite{AngelGoodmanMerle2013}) which give a scaling limit for the IPC~itself. We show that the random walk projected on the backbone and properly normalized converges to a slight variant of an SSBM.

\section{Statement of Results}

\subsection{Results for the Incipient Infinite Cluster}
Let $\mathbb{T}_2$ be a rooted binary tree, i.e., $\mathbb{T}_2$ is an infinite tree in which every vertex has degree $3$, except for the root $\rho$ which has degree $2$. Denote by $\C_\rho$ the connected component of the root $\rho$ under critical percolation in $\mathbb{T}_2$. Let also $\C_{\rho}^n$ be a random tree having the law of $\C_{\rho}$ conditioned on intersecting the boundary of a ball of radius $n$ (centered at the root, with the graph-distance on $\mathbb{T}_2$).
The \emph{Incipient Infinite Cluster} (IIC) (which we will denote by $\C^\infty$) is an infinite random tree which is obtained as the limit as $n\to\infty$ of $\C_{\rho}^n$. For details of the definition we refer to \cite{Kesten}. We will denote $(\mathcal{X},\mathcal{G},\boldmath{P})$ the probability space in which $\C_{\rho}^\infty$ is defined.

It is a known fact that the IIC possesses a single path to infinity, i.e., there exists a unique nearest-neighbor, non-self intersecting path starting at the root which is unbounded. This path is called the \textit{backbone}. Obviously, the backbone is isomorphic, as a graph, to $\N$. Hence, the IIC can be seen as $\N$ adorned with finite branches. The $k$-th branch (that is, the branch emerging from the $k$-th vertex of the backbone) will be denoted $\B_k$ and the $k$-th vertex of the backbone will be regarded as the root of $\B_k$. In \cite{Kesten} it is showed that $(\B_k)_{k\in\N}$ is distributed as an i.i.d.~sequence of critical percolation clusters on $\mathbb{T}_2^\ast$, where $\mathbb{T}_2^\ast$ is an infinite rooted tree in which each vertex has degree $3$ except for the root which has degree $1$.

We will study the random walk on the IIC projected to the backbone.
Let $(Y^{\IIC}_k)_{k\in\N_0}$ be a discrete time, nearest neighbor, symmetric random walk on $\C^\infty$ starting at the root. Denote $\Phi:\C^\infty\mapsto\N$ the projection of $\C^\infty$ onto the backbone. That is, $\Phi(x)=k$ iff $x\in\B_k$.
Finally, let  $(X^{\IIC}_t)_{t\geq0}$ be the simple random walk on the IIC projected to the backbone, i.e., $X^{\IIC}_t:=\Phi(Y^{\IIC}_{\lfloor t \rfloor})$. For each $\omega\in \mathcal{X}$ let $\boldmath{P}_\omega$ denote the law of $X^{\IIC}$ for a fixed realization $\C_{\rho}^{\infty}(\omega)$ of the IIC. We define the annealed law of $X^{\IIC}$ as the semi-direct product $\Pb:=\boldmath{P}\times\boldmath{P}_\omega$. More precisely,
\[\Pb[A]=\int_{\mathcal{X}} P_w(A) P(dw),\]
for all $A$ Borelian of $D(\R_+)$ endowed with the uniform convergence, where $D(\R_+)$ denotes the space of c\`{a}dl\`{a}g paths.
The first main theorem of this article is the following:
\begin{theorem}\label{thm:IIC}
 There exists a random process $(Z^{\IIC}_t)_{t\geq0}$ such that \[(\epsilon^{1/3} X^{\IIC}_{\lfloor \epsilon^{-1} t\rfloor })_{t\geq0}{\to} (Z^{\IIC}_t)_{t\geq0} \quad \text{as }\epsilon \to 0,\]
 in $\Pb$-distribution, and the convergence takes place in the space $D(\R_+)$ endowed with the topology of uniform convergence.
\end{theorem}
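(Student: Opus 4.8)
\noindent\emph{Proof strategy.} The plan is to view $(X^{\IIC}_t)_{t\ge 0}$ as a randomly trapped random walk in the sense of \cite{rtrw} and to verify the hypotheses of the scaling limit theorem proved there; $Z^{\IIC}$ will then be the corresponding spatially subordinated Brownian motion. Observe $Y^{\IIC}$ only at the times it visits the backbone: since every excursion of $Y^{\IIC}$ into a branch $\B_k$ starts and ends at the backbone vertex $k$, such excursions are invisible to the induced jump chain, so the backbone walk $\bar X$ is simply the nearest-neighbour symmetric walk on $\N$ reflected at the root. The process $X^{\IIC}$ is the time-change of $\bar X$ whose clock at site $k$ is fed by the $Y^{\IIC}$-excursions into $\B_k$: each visit of $\bar X$ to $k$ consumes a physical time $1+\sum_{i=1}^{G}\sigma_i$, where $G$ is a geometric number of failed attempts to leave $k$ along the backbone and, conditionally on $\B_k$, the $\sigma_i$ are i.i.d.\ copies of the return time to $k$ of the simple random walk on $\B_k\cup\{k\}$. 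As $(\B_k)_{k\ge1}$ is i.i.d.\ by \cite{Kesten}, the conditional laws of the clock increments form an i.i.d.\ sequence of random probability measures on $[0,\infty)$ --- exactly the data defining a randomly trapped random walk in an i.i.d.\ environment.

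The heart of the proof is the identification of the limiting environment. Conditionally on $\B_k$, the mean clock increment at $k$ is comparable to the number of edges of the finite tree $\B_k$, hence to $|\B_k|$, because the mean return time of a simple random walk to a vertex equals twice the number of edges over its degree. Since $\B_k$ is a critical percolation cluster on $T^\ast$ (a critical Galton--Watson tree), $\Pb(|\B_k|\ge m)\sim c\,m^{-1/2}$, so the trap depths have tail index $1/2$. Writing $n=\epsilon^{-1/3}$ for the spatial scale, the matching depth scale is $n^2$ (as $1/\alpha=2$), and the marked point process $\sum_k\delta_{(k/n,\,|\B_k|/n^2)}$ converges to a Poisson point process on $\R_+\times(0,\infty)$ of intensity proportional to $\mathrm{d}x\otimes v^{-3/2}\,\mathrm{d}v$. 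What \cite{rtrw} really requires is the convergence of the \emph{entire} rescaled clock law at a deep trap, not only of its mean: conditioning $\B_k$ to be large and invoking Croydon's invariance principle \cite{rwrt} (simple random walk on a large critical Galton--Watson tree, space rescaled by $\asymp n^{-1}$ and time by $\asymp n^{-3}$, converges to Brownian motion on the continuum random tree), one shows that the physical time accumulated inside $\B_k$, seen as a function of the rescaled local time of $\bar X$ at $k$, converges to a mark-dependent space- and time-rescaled copy of the inverse local time at the root of Brownian motion on the CRT, a subordinator. Since the number of visits of $\bar X$ to a deep trap is of the same order as the square root of its size, this subordinator is genuinely random, so the limit is a true SSBM, not a FIN-type diffusion; it is the ``trap process'' attached to each atom of the limiting Poisson process.

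With $\bar X$ converging (space $\times\,n^{-1}$, time $\times\,n^{-2}$) to reflected Brownian motion $B$, and the trap environment converging as above, the hypotheses of the scaling limit theorem of \cite{rtrw} are satisfied, and it gives convergence under the annealed law $\Pb$ of $(\epsilon^{1/3}X^{\IIC}_{\lfloor\epsilon^{-1}t\rfloor})_{t\ge0}$ to $Z^{\IIC}=B\circ\phi^{-1}$, where $\phi(s)=s+\sum_i\tau_i\big(L^{x_i}_s\big)$, $(x_i)$ being the atoms of the limiting Poisson process, the $\tau_i$ independent rescaled copies of the inverse local time at the root of Brownian motion on the CRT, and $L^x$ the local time of $B$ at $x$. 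As $\phi$ is a strictly increasing c\`{a}dl\`{a}g function, its right-continuous inverse $\phi^{-1}$ is continuous, hence so is $Z^{\IIC}$; the Skorokhod convergence produced by \cite{rtrw} therefore upgrades to convergence in $D(\R_+)$ for the uniform topology, consistently with the fact that the prelimit jumps only by $\pm\epsilon^{1/3}$.

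The step I expect to be the main obstacle is the identification of the limiting environment: one must lift Croydon's invariance principle for the walk on a critical Galton--Watson tree to an invariance principle for its local time and inverse local time at the root --- with enough control on the overshoot past each newly reached level to get convergence in the uniform topology --- and, at the same time, justify replacing ``one visit of $\bar X$ to $k$ spawns a geometric number of i.i.d.\ $\B_k$-excursions'' by the clean subordinator picture, through a law of large numbers holding uniformly over the $\asymp n$ visits to each of the $\asymp n$ relevant sites. The remaining ingredients --- the tail estimate $\Pb(|\B_k|\ge m)\sim c\,m^{-1/2}$ and the convergence of $\bar X$ to reflected Brownian motion --- are classical.
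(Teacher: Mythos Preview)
Your approach is essentially the paper's: verify assumptions HT and L of \cite{rtrw} for the i.i.d.\ RTRW $X^{\IIC}$, with HT coming from the $m^{-1/2}$ tail of $|\B_k|$ and L from lifting Croydon's invariance principle \cite{rwrt} to convergence of the (inverse) local time at the root of the walk on a large critical tree, then upgrade the resulting Skorokhod convergence to uniform via continuity of the limit. The only technical wrinkle you do not anticipate is that the convergence theorem of \cite{rtrw} is stated for RTRWs on $\Z$, so the paper first extends the environment to a two-sided i.i.d.\ version $X^{\IIC\ast}$, applies the theorem there, and then restricts to the positive axis to recover the reflected picture; your description of the limiting clock $\phi(s)=s+\sum_i\tau_i(L^{x_i}_s)$ also has a spurious drift term $s$ (the backbone carries no mass in the limit) and omits the $y_i^{3/2}$, $y_i^{-1/2}$ scalings of the subordinators that encode the trap depths.
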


\begin{remark} We believe that this result should hold for critical Galton-Watson trees under some moment condition (finite, positive variance). We restrict ourselves to the case of a binary tree for simplicity and readability. \end{remark} 

 We remark here that the process $Z^{\IIC}$ belongs to a class of processes introduced in \cite{rtrw} called \emph{Spatially Subordinated Brownian motions} (SSBM) and which are obtained as highly non-trivial time changes of a Brownian motion.
  Now we prepare the ground for a detailed description of $Z^{\IIC}$.
Let $(\bar{x}_i,\bar{y}_i)_{i\in\N}$ be an inhomogeneous Poisson point process on $\R\times \R_+$ with intensity measure $\frac{1}{2}\pi^{-1/2}y^{-3/2}dxdy$. Define a random measure $\mu_{\IIC}$ as
\begin{align}\label{eq:varrho}
\mu_{\IIC}=\sum_{i\in\N} \bar{y}_i\delta_{\bar{x}_i}.
\end{align}
Let $\left((S^i(t))_{t\geq0}\right)_{i\in\N}$ be an i.i.d.~sequence of random processes having the annealed law of the inverse local time at the root of the Brownian motion on the \emph{Continuum Random Tree} (CRT) (see display \eqref{annealedlocaltime} in Section \ref{s:BMCRT} for the definition of that process). We also assume that the $\left((S^i(t))_{t\geq0}\right)_{i\in\N}$ are independent of $\mu_{\IIC}$.

Finally, let $(B^+_t)_{t\geq0}$ be a one-dimensional, standard Brownian motion reflected at the origin independent of everything else and let $l^+(x,t)$ be its local time.
Define
\begin{equation}\label{eq:defphiiic}
\phi^{\IIC}_t:=\sum_{i\in\N}\bar{y}_i^{3/2}S^i(\bar{y}_i^{\scriptscriptstyle-\frac{1}{2}}l^+(\bar{x}_i,t))
\end{equation}
and its right-continuous generalized inverse
\[
\psi^{\IIC}_t:=\inf\{s\geq0:\phi^{\IIC}_s > t\}.
\]
The following theorem provides a description of $Z^{\IIC}$.
\begin{theorem}\label{t:descriptionofziic}
The process $Z^{\IIC}$ can be defined as the SSBM
\[
Z^{\IIC}_t:=B^+_{\psi^{\IIC}_{t}}, \quad t\geq0.
\]
\end{theorem}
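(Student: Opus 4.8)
The plan is to identify the projected walk $X^{\IIC}$ as a randomly trapped random walk (RTRW) in the sense of \cite{rtrw} and then invoke the general convergence theorem from that reference, the bulk of the work being the verification of its hypotheses. First I would observe that, because the branches $(\B_k)_{k\in\N}$ are i.i.d.\ critical percolation clusters and the walk $Y^{\IIC}$ is a symmetric nearest-neighbor walk, the projection $X^{\IIC}$ is a continuous-time random walk on $\N$ (after the natural time change) whose jump rates and holding-time laws at site $k$ depend only on the branch $\B_k$: each time the backbone walk sits at vertex $k$ it performs an excursion into $\B_k$ whose duration has the law of (a constant times) the return time to the root of the simple random walk on the finite tree $\B_k$. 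This is exactly the RTRW structure, with an i.i.d.\ trap environment $(\tau_k)_{k\in\N}$ encoding the excursion-time distributions. So Theorem~\ref{thm:IIC} will follow from the RTRW scaling-limit result provided the trap environment lies in the domain of attraction of the appropriate stable-type object.

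The key analytic input is the tail behavior of a single trap: I would show that the Laplace transform (or the distribution function) of the root-return time of the walk on a critical percolation branch $\B_k$, suitably rescaled, converges to that of the inverse local time at the root of Brownian motion on the CRT, with the heavy tail producing the exponent $1/3$ in the statement. Concretely, Kesten's description gives that $\B_k$ conditioned to be large looks like a critical Galton--Watson tree, and Aldous's convergence of critical GW trees to the CRT together with Croydon's convergence of the random walk to Brownian motion on the CRT \cite{rwrt} should give that the normalized excursion time converges in law to $S^i(\cdot)$ evaluated at an appropriate local-time clock; the factors $\bar{y}_i^{3/2}$ and $\bar{y}_i^{-1/2}$ in the definition of $\phi^{\IIC}$ are precisely the space/time rescalings dictated by the CRT scaling (mass $\sim n$, height/return-time $\sim n^{1/2}$, so excursion time $\sim n^{3/2}$). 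The heavy-tailed point process $\mu_{\IIC}$ with intensity $\tfrac12\pi^{-1/2}y^{-3/2}\,dx\,dy$ then arises as the scaling limit of the environment: the $y^{-3/2}$ exponent reflects that $\Pb[\,|\B_k| > m\,] \asymp m^{-1/2}$ for critical percolation on the binary tree, and the constant $\tfrac12\pi^{-1/2}$ is the matching normalization.

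Having set this up, the proof of Theorem~\ref{t:descriptionofziic} itself is then a matter of matching the abstract SSBM produced by \cite{rtrw} to the explicit formula $Z^{\IIC}_t = B^+_{\psi^{\IIC}_t}$. I would recall from \cite{rtrw} that the RTRW scaling limit is a time change of a (reflected, since we start at the root and the backbone is $\N$) Brownian motion $B^+$, where the additive functional is built by integrating, against the local time $l^+(x,t)$ of $B^+$, the contributions of the traps indexed by the limiting Poisson process: each trap at position $\bar{x}_i$ with mass $\bar{y}_i$ contributes a subordinator-type clock $\bar{y}_i^{3/2} S^i(\bar{y}_i^{-1/2} l^+(\bar{x}_i,t))$, exactly $\phi^{\IIC}_t$; inverting this clock gives $\psi^{\IIC}$ and hence $Z^{\IIC}$. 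The identification of the individual trap clock with the annealed inverse local time of Brownian motion on the CRT is where the branch-level scaling limit (Croydon's theorem, in the annealed form referenced in Section~\ref{s:BMCRT}) is used once more.

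The main obstacle I anticipate is establishing the branch-level convergence with enough uniformity and in a strong enough topology to feed the RTRW machinery: one needs not just convergence in distribution of a single rescaled excursion time, but joint convergence of the whole environment (position, mass, and excursion-law of every trap simultaneously) together with tightness estimates ruling out that the walk loses a macroscopic amount of time in the small (unrescaled) traps or escapes to a single very deep trap in an uncontrolled way. This requires quantitative control on the return-time tails of the walk on a critical percolation branch conditioned on its size, uniformly in the size — essentially a functional, annealed version of the Barlow--Kumagai-type estimates — and careful bookkeeping to show that the discrete additive functional $t \mapsto$ (real time elapsed by $Y^{\IIC}$ when $X^{\IIC}$ has taken $t$ backbone-steps) converges to $\phi^{\IIC}$ after rescaling. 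Once that functional-level convergence is in hand, passing to the inverse and composing with $B^+$ is routine and the two theorems follow together.
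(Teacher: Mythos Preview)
Your proposal is correct and follows the same route as the paper: cast $X^{\IIC}$ as an i.i.d.\ RTRW, verify the heavy-tail hypothesis HT (via $\Pb[|\B_k|>m]\sim cm^{-1/2}$) and the single-trap Laplace-exponent convergence L (via Aldous--Croydon convergence of the walk on a large critical tree to Brownian motion on the CRT and its local time at the root), and then read off the explicit SSBM description from the general theorem of \cite{rtrw}. One clarification: the joint-environment and tightness obstacles you anticipate are largely absorbed by the RTRW machinery once HT and L are checked at the level of a \emph{single} trap, so no separate uniform Barlow--Kumagai type estimates are needed; the only extra bookkeeping in the paper is to apply the theorem of \cite{rtrw} first on $\Z$ (where it is stated) and then restrict to the half-line to obtain the reflected limit.
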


This theorem is directly obtained when proving Theorem~\ref{thm:IIC}.

Note that $Z^{\IIC}$ is a time-change of $B^+$ in which each site $\bar{x}_i$ plays the role of a \textit{trap} where $Z^{\IIC}$ will spend a positive amount of time. The time spent on $\bar{x}_i$ will depend on $S^i$ (which is the inverse local time at the root of a Brownian motion on the CRT), on $\bar{y}_i$ (which, as we will see later, can be thought of as being the \textit{depth of the trap} at $\bar{x}_i$) and on $l(\bar{x}_i,t)$ (which, in some sense, measures the ``number of times that $Z^{\IIC}$ has passed through the trap at $\bar{x}_i$'').

We provide yet another, alternative, representation of $Z^{\IIC}$ as the \emph{Brownian motion in the Continuum Random Forest projected to the backbone}. The Continuum Random Forest can be informally described as a collection of Continuum Random Trees glued to $\R_+$ and can be constructed as follows: Let $(\bar{x}_i,\bar{y}_i)_{i\in\N}$ be as in \eqref{eq:varrho} and $((\T_i,d_i,\mu_i))_{i\in\N}$ be an i.i.d.~sequence of Continuum Random Trees independent of the $(\bar{x}_i,\bar{y}_i)_{i\in\N}$ (for the definition of the CRT see Definition \ref{definitioncrt} in Section \ref{s:crt}). Then we glue the root of the rescaled random trees $(\T_i, \bar{y}_i^{\scriptscriptstyle\frac{1}{2}}d_i,\bar{y}_i\mu_i)$ to the backbone $\R_+$ at positions $\bar{x}_i$. The tree $\mathcal{F}$ obtained in this way is the Continuum Random Forest. We endow $\mathcal{F}$ with a distance $d$ which is obtained from the Euclidean distance in $\R_+$ and the $\bar{y}_i^{\scriptscriptstyle\frac{1}{2}}d_i,i\in\N$ in the obvious way. We also endow $\mathcal{F}$ with a measure $\mu$ which is the sum of the $\bar{y}_i\mu_i,i\in\N$. Note that we are assigning $\mu$-measure $0$ to the backbone $\R_+$.

   In \cite{AldousCRT2}, Aldous defined the \emph{Brownian motion} on locally compact continuous trees as a strong Markov process satisfying a set of properties (We recall his definition in Section \ref{s:BMCRT}, Definition \ref{definitionofbmonadendrite}) and he also claimed that one can prove such process is unique. Existence was first provided by Krebs in \cite{krebs1995brownian} (see also \cite{rwrt} and \cite{kigami1995harmonic} for a different approach). This allow us to consider $(B^{\mathcal{F}}_t)_{t\geq0}$ the \emph{Brownian motion in the Continuum Random Forest}. Let $\pi: \mathcal F \to \R_+$ be the projection to the backbone.
 \begin{theorem}\label{prop:alternativeexrepssionforziic}
   The process $(Z^{\IIC}_t)_{t\geq0}$ is distributed as $(\pi(B_t^{\mathcal{F}}))_{t\geq0}$.
\end{theorem}
\begin{remark}
We would like to point out that Theorem \ref{prop:alternativeexrepssionforziic} suggests a different way of proving Theorem \ref{thm:IIC}. Suppose we can show that the random walk on the IIC scales to the Brownian motion in the Continuum random forest. To our knowledge, the latter has never been rigorously proved, but Theorem 7.2 in \cite{scaling}, or the arguments in \S 7.4 of \cite{athreya2017invariance} could be used to produce a proof. Assume in addition that, in the space where the convergence takes place, the discrete backbone coincides with the backbone of the IIC and that the projection to the backbone is a continuous function. Then, Theorem \ref{thm:IIC} would follow from the convergence of the RW on the IIC to the BM on the CRF by the continuous mapping theorem.
\end{remark}

\subsection{Results for the Invasion Percolation Cluster}
Now, we turn our attention to the \textit{Invasion Percolation Cluster} (IPC). The IPC was introduced in \cite{WilkinsonWillemsen1983} and is obtained through an \textit{invasion process} in the vertices of $\mathbb{T}_2$.
Let $(w_x)_{x\in \mathbb{T}_2}$ be an i.i.d.~sequence of random variables indexed by the vertices of $\mathbb{T}_2$ whose common distribution is uniform on $(0,1)$. Set $\I^0:=\rho$ and
\[\I^{n+1}:=\I^n\cup\left\{x:d(x,\I^n)=1\textrm{ and } w_x=\min\{w_z:d(\I ^n,z)=1\}\right\}\]
where $d$ is the graph distance in $\mathbb{T}_2$.
That is, $\I^{n+1}$ is obtained from $\I^n$ by adding the vertex $x$ on the outer boundary of $\I^n$ with smaller ``weight'' $w_x$.
The \textit{ Invasion Percolation Cluster} (IPC) on $\mathbb{T}_2$ is defined as $\I^{\infty}:=\cup_{n\in\N}\I^n$. We denote by $(\mathcal{X},\mathcal{G},\boldmath{P})$ the probability space in which $\I^{\infty}$ is defined.

It was shown in \cite[\S1.3]{ipc} that, similarly to the IIC, the IPC has a single path to infinity. Thus, as the IIC, the IPC can also be seen as a backbone $\N$ adorned with finite branches, but in this case the branches are not i.i.d. In fact, denoting $\Lv_k$ the branch which emerges from the $k$-th vertex of the backbone, we have that $\Lv_k$ is distributed as a sub-critical percolation cluster with a percolation parameter which depends on $k$ and tends to the critical value as $k\to\infty$ (see \cite{ipc} for a proof of that fact).
 A detailed description of the scaling limit through its contour function is given in \cite{AngelGoodmanMerle2013}

Next, we define the projection on the backbone of the simple random walk on the IPC.
Let $\Phi:\I^\infty\mapsto\N$ be the projection to the backbone on $\I^{\infty}$.
Let $(Y^{\IPC}_k)_{k\in\N}$ be a discrete-time, nearest neighbor, symmetric random walk on $\I^\infty$ starting at the root. Let $(X^{\IPC}_t)_{t\geq 0}$ be defined by setting $X^{\IPC}_t:=\Phi(Y^{\IPC}_{\lfloor t\rfloor})$. For each $\omega\in \mathcal{X}$ let $\boldmath{P}_w$ denote the law of $X^{\IPC}$ for a fixed realization $\I^{\infty}(\omega)$ of the IPC. We define the annealed law of $X^{\IPC}$ as the semi-direct product $\Pb:=\boldmath{P}\times\boldmath{P}_\omega$.

\begin{theorem}\label{thm:IPC}
There exists a random process $(Z^{\IPC}_t)_{t\geq0}$ such that \[(\epsilon^{1/3} X^{\IPC}_{\lfloor \epsilon^{-1} t\rfloor })_{t\geq0}\to (Z^{\IPC}_t)_{t\geq0} \quad \text{as }\epsilon \to 0,\]
in $\Pb$-distribution, and the convergence takes place in the space $D(\R_+)$ of c\`{a}dl\`{a}g paths endowed with the topology of uniform convergence.
\end{theorem}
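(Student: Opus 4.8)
The plan is to follow the same strategy as for Theorem~\ref{thm:IIC}, treating $X^{\IPC}$ as a randomly trapped random walk on $\N$ in the sense of~\cite{rtrw} and verifying the hypotheses that yield a scaling limit to a spatially subordinated Brownian motion. First I would record the structural description of the IPC: it is the backbone $\N$ decorated with branches $(\Lv_k)_{k\in\N}$, where conditionally on the weights $\Lv_k$ is a subcritical percolation cluster with parameter $p_k \to 1/2$. The quantity that governs the trapping is, for each $k$, the Green-function weight of $\Lv_k$ seen from its root, i.e.\ roughly the expected time spent in $\Lv_k$ per visit; equivalently one tracks the point process of rescaled branch masses. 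Unlike the IIC case the branches are not i.i.d., so one cannot directly invoke an i.i.d.\ RTRW limit theorem; instead one must show that, after the diffusive rescaling $x\mapsto \epsilon^{1/3}x$, $t\mapsto \epsilon^{-1}t$, the random environment converges to a (possibly inhomogeneous) limiting point process of traps, each trap contributing an independent copy of the inverse local time at the root of Brownian motion on the CRT.

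The key steps, in order, would be: (i) express the projected walk $X^{\IPC}$ exactly as a time change of a simple random walk on $\N$, with clock given by a sum over backbone sites of i.i.d.-given-the-environment excursion times into the branches, so that it fits the RTRW framework of~\cite{rtrw}; (ii) analyze the environment: show that the empirical measure $\sum_k \delta_{(\epsilon^{1/3}k,\, \epsilon \cdot (\text{trap depth at }k))}$ converges, as $\epsilon\to0$, to a limiting intensity on $\R_+\times\R_+$ — here one uses the precise asymptotics for the IPC branch laws from~\cite{ipc} (the percolation parameter along the backbone behaves like $1/2 + \Theta(1/k)$, so deep traps occur on the right spatial scale) to identify the limiting intensity, which will differ from the homogeneous $\frac12\pi^{-1/2}y^{-3/2}\,dx\,dy$ of the IIC by a deterministic spatially dependent factor, giving the ``slight variant'' of an SSBM announced in the introduction; (iii) identify the single-trap scaling limit: show that the inverse local time at the root of the simple random walk on a near-critical subcritical percolation cluster of the relevant size, suitably rescaled, converges to the annealed inverse local time at the root of Brownian motion on the CRT — this is the same computation as for the IIC branches (critical percolation clusters converge to the CRT), with a subcriticality correction that is negligible on the scale of the surviving deep traps; (iv) combine (ii) and (iii) via the continuous-mapping / convergence-of-RTRW machinery of~\cite{rtrw} (joint convergence of the reflected Brownian motion $B^+$, its local time, the limiting trap point process, and the i.i.d.\ subordinators $S^i$) to obtain convergence of the time-changed process, then invert the clock and conclude convergence of $X^{\IPC}$ in $D(\R_+)$ with the uniform topology, exactly as in Theorems~\ref{thm:IIC}--\ref{prop:alternativeexrepssionforziic}.

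I expect the main obstacle to be step (ii): controlling the non-i.i.d.\ environment of the IPC and proving the right convergence of the trap point process. One must show two things simultaneously — that the ``shallow'' traps (those of depth $o(\epsilon^{-1})$) aggregate, by a law-of-large-numbers argument along the backbone, into a deterministic time drift that is absorbed into the linear part of the clock, and that the ``deep'' traps (depth of order $\epsilon^{-1}$) form, after rescaling, a Poisson-type point process with the claimed inhomogeneous intensity. The dependence between branches in the IPC (through the global invasion mechanism and the shared weights) is the real difficulty: one needs the stochastic-domination and asymptotic-independence results for IPC branches from~\cite{ipc} (and possibly~\cite{AngelGoodmanMerle2013}) to decouple far-apart branches well enough to get a Poissonian limit, and to show that the finitely many deep traps in any compact space-time window behave as if independent. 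A secondary technical point is bookkeeping the normalization constant: matching the $\epsilon^{1/3}$ spatial and $\epsilon^{-1}$ temporal scalings requires that a typical excursion into $\Lv_k$ costs $\Theta(1)$ time while the backbone walk travels $\Theta(\epsilon^{-1/3})$ sites per $\epsilon^{-1}$ units of time and the deepest trap encountered has depth $\Theta(\epsilon^{-1})$; verifying these are mutually consistent (so that the limit is a genuine SSBM-type process and not degenerate) uses the Barlow--Kumagai-type estimates together with the IPC branch-size tail from~\cite{ipc}. Once the environment convergence is established, the remaining steps are close to verbatim adaptations of the IIC argument.
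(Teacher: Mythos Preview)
Your overall framework---treat $X^{\IPC}$ as a RTRW and verify the non-i.i.d.\ analogues of assumptions HT and L from~\cite{rtrw}---is exactly what the paper does. But your description of the limiting environment in step~(ii) is off in a way that would derail the argument. The limit is \emph{not} a Poisson point process whose intensity differs from the IIC one by a deterministic spatial factor; it is a genuinely random measure $\mu_{\IPC}$ built in two layers. First, the rescaled backbone parameters $(k(2M_{\lceil kt\rceil}-1))_{t>0}$ converge (by~\cite{ipc}) to the lower envelope $E_t$ of a homogeneous Poisson process on $(0,\infty)^2$. Second, \emph{conditionally} on the weights, the branches are independent subcritical clusters, and the rescaled partial sums of their sizes converge to a concatenation of \emph{inverse Gaussian} subordinators whose parameters are determined by the piecewise-constant values of $E$. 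So the right route is not a decoupling/Poisson-approximation argument for far-apart branches, but rather: condition on $(M_k)$, pass to the Skorohod coupling in which the rescaled weights converge a.s.\ to $E$, and then compute Laplace transforms branch-by-branch (Lemma~\ref{laplacetransformofcardinality} gives exactly the inverse Gaussian exponent in the limit). There is no ``shallow-trap drift'' term to separate out; the limiting clock measure is purely atomic.

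Your step~(iii) also overcomplicates matters. There is no subcriticality correction to control: for any $p\in(0,1)$, the law of a percolation cluster on $T^\ast$ \emph{conditioned to have exactly $n$ vertices} is uniform over $n$-vertex subtrees and hence independent of $p$. Thus $\pi^a$ in assumption~$\tilde{\text{L}}$ coincides with the critical case, and the IIC computation (Lemma~\ref{assumptionL}) applies verbatim. Once $\widetilde{\text{HT}}$ and $\tilde{\text{L}}$ are in hand, Theorem~\ref{RTRWIPC} gives the $J_1$ limit for the two-sided extension $X^{\IPC\ast}$, and the restriction to $\R_+$ plus the upgrade to uniform topology go through exactly as in Section~\ref{sect_zzz}.
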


Contrary to $Z^\IIC$, the process $Z^{\IPC}$ is not an SSBM in the strict sense of~\cite{rtrw}. However, the construction of the process is very similar and we will refer to this sort of process as an SSBM as well. The only difference is that the measure $\mu_{\IIC}$ (see display \eqref{eq:varrho}) used to choose $Z^\IIC$ will be replaced by a slightly more complex random measure $\mu_{\IPC}$ which is neither independent nor i.i.d. 

Let $E_t$ be the lower envelope of an homogeneous Poisson point process in $(0,\infty)\times(0,\infty)$. More specifically,
let $\mathcal{P}$ be a Poisson point process on $(0,\infty)\times(0,\infty)$ with intensity 1 and take $t>0$, we define
\begin{equation} \label{lt}
E_t:=\min\{y : (x,y)\in {\mathcal P} \textrm{ for some } x\leq t\}.
\end{equation}
Note that the process $E$ is decreasing and piecewise constant.

 Let $\delta>0$, $\gamma\geq0$ and $(I^{\delta,\gamma}_t)_{t\geq0}$ be the subordinator characterized by
\[\E[\exp(-\lambda I^{\delta,\gamma}_t)]=\exp(-t\delta(\sqrt{2\lambda+\gamma^2}-\gamma))\quad \text{for all }\lambda>0.\]
The process $(I^{\delta,\gamma}_t)_{t\geq0}$ is called inverse Gaussian subordinator of parameters $\delta,\gamma$. For more details we refer to \cite{levy}, example 1.3.21.

 For each realization of $E$, let $(b_i)_{i\in\N}$ be an enumeration of the points of discontinuity of $E$ and $a_i:=\max\{b_j: b_j<b_i\}$ so that $E$ is constant on the intervals $[a_i,b_i)$, $i\in\N$.
Also let $((I^i_t)_{t\geq0})_{i\in\N}$ be an independent family of inverse Gaussian subordinators, each one with parameters $\delta=1/\sqrt{2}$ and $\gamma=\sqrt{2}E_{a_i}$.
Let $\mu_{\IPC}^i$ be the random Lebesgue-Stieltjes measure associated to $I^i$.
Finally, we define\footnote{Alternatively, $\mu_{\IPC}$ can be described as a Cox process directed by the random measure $\xi=\sum_i1_{[a_i,b_i)}(x) dx \zeta^i(dy) $, where $\zeta^i(dy)$ is the L\'{e}vy measure of $I^i$.} $\mu_{\IPC}(A):=\sum_{i\in\N}\mu_{\IPC}^i(A\cap [a_i,b_i))$ for each Borelian set $A$.
Since the inverse Gaussian subordinators are pure jump processes, we have that $\mu_{\IPC}$ is a purely atomic measure. Hence, we can write
\be\label{ipclimitmeasure}
\mu_{\IPC}=\sum_{i\in\N}\tilde{y}_i\delta_{\tilde{x}_i}.
\ee

Let $((S^i(t))_{t\geq0})_{i\in\N}$ be an i.i.d.~sequence of random processes having the (annealed) law of the inverse  local time at the root of the Brownian motion on the CRT and independent of $\mu_{\IPC}$. Let $(B^+_t)_{t\geq0}$ be a one-dimensional standard Brownian motion reflected at the origin independent of everything else and let $l^+(x,t)$ be its local time.
Define
\[
\phi^{\IPC}_t:=\sum_{i\in\N}\tilde{y}_i^{ 3/2}S^i(\tilde{y}_i^{\scriptscriptstyle-1/2}l^+(\tilde{x}_i,t))
\]
and its right-continuous generalized inverse
\[
\psi^{\IPC}_t:=\inf\{s\geq0:\phi^{\IPC}_s >t\}.
\]

 
 The next theorem provides the description of $Z^{\IPC}$.
\begin{theorem}\label{t:descriptionofzipc}
The process $Z^{\IPC}$ can be defined as the SSBM
\[
Z^{\IPC}_t:=B^+_{\psi^{\IPC}_{t}}, \quad t\geq0.
\]
\end{theorem}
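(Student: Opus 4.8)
The plan is to extract this description from the proof of Theorem~\ref{thm:IPC}, along the same lines as Theorem~\ref{t:descriptionofziic} is extracted from the proof of Theorem~\ref{thm:IIC}: one realizes $X^{\IPC}$ as a randomly trapped random walk on $\N$ in the sense of~\cite{rtrw} and identifies the three ingredients of the associated spatially subordinated Brownian motion, namely the spatial motion, the trap environment, and the law of the clock inside a single trap. Since $X^{\IPC}$ is the backbone coordinate of the simple random walk $Y^{\IPC}$ on $\I^\infty$, it moves on the backbone $\cong\N$ and, between consecutive backbone steps, accumulates time during excursions into the branch $\Lv_k$ hanging at the current vertex $k$; thus each $\Lv_k$ is a trap and the overall structure is the same as in the IIC case. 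The spatial motion is the scaling limit of the walk on $\N$ started at $0$, i.e.\ the reflected Brownian motion $B^+$, and the clock inside a single trap is governed, as before, by the random walk internal to $\Lv_k$. The only genuinely new feature is the law of the sequence of traps $(\Lv_k)_k$, which for the IPC is neither i.i.d.\ nor independent.

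Next I would identify the trap environment. By the description of the IPC from~\cite{ipc}, the branch $\Lv_k$ is a subcritical percolation cluster on $T^\ast$ whose percolation parameter approaches $p_c=1/2$ at a rate controlled by the forward maximal-weight process along the backbone; under the scaling of Theorem~\ref{thm:IPC}, and after the appropriate rescaling of the backbone, this process converges to the lower envelope $E$ of a unit-intensity Poisson point process on $(0,\infty)^2$ defined in~\eqref{lt}. On a plateau $[a_i,b_i)$ of $E$ (an interval on which the limiting subcriticality is constant, proportional to $E_{a_i}$), the partial sums over $k$ of the suitably rescaled ``sizes'' of the branches $\Lv_k$ converge to an inverse Gaussian subordinator; the identification of the parameters as $\delta=1/\sqrt2$ and $\gamma=\sqrt2\,E_{a_i}$ comes from computing the Laplace transform of the size of a single subcritical cluster, raising it to the power equal to the rescaled number of branches in an infinitesimal sub-interval, and recognizing the limiting exponent $\sqrt{2\lambda+\gamma^2}-\gamma$. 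Patching the plateaux together yields the measure $\mu_{\IPC}=\sum_i\tilde y_i\delta_{\tilde x_i}$ of~\eqref{ipclimitmeasure} as the scaling limit of the rescaled point process of trap positions and trap depths. As a sanity check, letting $E\to0$ collapses the inverse Gaussian to the $1/2$-stable subordinator and recovers $\mu_{\IIC}$ of~\eqref{eq:varrho}, consistent with the IPC being asymptotically critical.

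I would then show that the clock inside a single deep trap has the same scaling limit as in the IIC case. A subcritical percolation cluster on $T^\ast$ conditioned to be of the large size relevant here converges, once rescaled, to the Continuum Random Tree; hence, by Croydon's convergence of the simple random walk on large critical Galton--Watson trees to the Brownian motion on the CRT~\cite{rwrt}, the time accumulated by $Y^{\IPC}$ inside a trap of rescaled depth $\tilde y_i$, regarded as a function of the number of visits to the root of that branch, i.e.\ of $l^+(\tilde x_i,\cdot)$, converges to $\tilde y_i^{3/2}\,S^i(\tilde y_i^{-1/2}\,\cdot\,)$, where $S^i$ is the annealed inverse local time at the root of the Brownian motion on the CRT. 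Feeding the converged trap environment $\mu_{\IPC}$ and the converged single-trap clocks into the randomly trapped random walk convergence scheme of~\cite{rtrw}, as adapted in the proof of Theorem~\ref{thm:IIC}, produces $\phi^{\IPC}$, its inverse $\psi^{\IPC}$, and the limit $Z^{\IPC}_t=B^+_{\psi^{\IPC}_t}$, which is precisely the claimed description; Theorem~\ref{t:descriptionofzipc} then follows from the uniqueness in law of the limit in Theorem~\ref{thm:IPC}.

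The main obstacle, and the place where the IPC argument genuinely departs from the IIC one, is that the branches $(\Lv_k)_k$ are correlated through the invasion procedure and the subcriticality drifts along the backbone, so the i.i.d.-environment limit theorem of~\cite{rtrw} cannot be invoked as a black box. Two things must be controlled: first, that the dependence between distinct branches is asymptotically negligible at the level of the scaling limit — this I would handle using the coupling of the IPC with independent percolation below the running forward-maximal weight, which makes the branches conditionally independent given that envelope and reduces the problem to the behaviour of the envelope itself; and second, the joint convergence of the envelope to $E$ together with the accumulation of trap mass on each of its plateaux, including control of the contribution of the infinitely many small traps so that only the part captured by $\mu_{\IPC}$ survives in the limit, along with the usual tightness estimates in $D(\R_+)$ with the topology of uniform convergence.
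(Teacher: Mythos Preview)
Your proposal is correct and follows essentially the same route as the paper: verify the non-i.i.d.\ analogues of the two RTRW assumptions (assumption~$\widetilde{\text{HT}}$ via the convergence of the forward-maximal weight process to the envelope $E$ and the identification of inverse Gaussian subordinators on its plateaux, and assumption~$\tilde{\text{L}}$ for the single-trap clock), then apply Theorem~\ref{RTRWIPC} and restrict to the positive half-line exactly as in the IIC case. The one place where the paper is more direct than your sketch is assumption~$\tilde{\text{L}}$: rather than arguing that a large subcritical cluster conditioned on its size converges to the CRT, the paper simply observes that the law of a percolation cluster on $T^\ast$ conditioned on having $n$ vertices is uniform over $n$-vertex subtrees and hence does not depend on $p$, so assumption~$\tilde{\text{L}}$ is literally the same statement as Lemma~\ref{assumptionL} already proved for the IIC.
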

The theorem above will be obtained together with Theorem \ref{thm:IPC}, therefore, for its proof we refer to the proof of Theorem \ref{thm:IPC}.

 We also get a representation of $Z^{\IPC}$ as the Brownian motion in a Random Forest projected to
  the backbone. Let $(\tilde{\mathcal{F}},\tilde{d},\tilde{\mu})$ be the tree constructed exactly as the Continuum Random Forest $\mathcal{F}$, with the only difference that instead of choosing the locations and sizes of the trees according to $(\bar{x}_i,\bar{y}_i)_{i\in\N}$ as in \eqref{eq:varrho}, we use $(\tilde{x}_i,\tilde{y_i})_{i\in\N}$ as in \eqref{ipclimitmeasure}. Let $(B^{\tilde{\mathcal{F}}}_t)_{t\geq0}$ be the Brownian motion in $(\tilde{\mathcal{F}},\tilde{d},\tilde{\mu})$ and $\pi:\tilde{\mathcal{F}}\to\R_+$ be the projection to the backbone.
 \begin{theorem}\label{prop:alternativeexrepssionforzipc}
 The process $(\pi(B^{\tilde{\mathcal{F}}}_t))_{t\geq0}$ is distributed as $(Z^{\IPC}_t)_{t\geq0}$.
 \end{theorem}
 
 This theorem is obtained as a by-product of the proof of Theorem~\ref{thm:IPC}.

\subsection{Scaling limits on large random trees} \label{Sect_finite_SSBM}

We study in Section 8 a problem closely related to the scaling limit questions discussed above. Instead of studying infinite trees and projecting the random walk on the backbone, we consider random walks on finite random trees conditioned to be large. We show how the notion of SSBM can be usefully extended to this context. It has been shown by Croydon~\cite{rwrt} that the random walk on a critical Galton-Watson tree conditioned to be large converges, once properly normalized, to the Brownian motion on the CRT.
In this context, the notion of backbone is not as immediate as in the case of the IIC. A simple substitute is to pick one point at random in the critical discrete tree and look at the projection of the random on the geodesic linking this point to the origin, i.e.~the ancestry line of this point. As we will see, the scaling limit of this projection requires a straightforward generalization of the notion of SSBM.

We can then extend this construction in an interesting way. Pick now $K$ points at random in the large finite critical tree, and consider the geodesic tree defined by the root and these $K$ points, i.e.~the genealogical tree. We show that the projection of the random walk on this geodesic tree with $K$ leaves converges to an interesting generalization of the notion of SSBM, which we call a SSBM on a finite tree.

The convergence for all $K$ to SSBMs on trees contains roughly the same information as the convergence to the Brownian motion on the CRT up to tightness considerations.  More precisely, if a discrete sequence of processes on the tree is such that, for any $K$, its projection onto the $K$-skeleton converges to the $K$-SSBM, then, the only possible scaling limit for the sequence of processes is the BM on the CRT.
 This new notion opens up the possibility of proving scaling limits along the line opened in~\cite{highdimensionallabyrinth} but for models more difficult than the one considered in~\cite{simplelabyrinth}. 
 
 The discussion of SSMB on trees in Section \ref{sect_finite_SSBM} will have an informal tone. We will not prove any mayor theorem, and instead we will discuss some conjectures.



We begin here to introduce the notion of SSBM on a finite tree. Assume, that we are given a metric tree $\mathrm{T}_K$, which is composed of a finite number of edges $e_1,\ldots, e_K$, all of which have a given length $l_1,\ldots,l_K$ for some $K<\infty$. We can obtain a metric on $\mathrm{T}_K$ by defining the distance linearly along every edge. As in the case of the standard construction of SSBM, the ingredients of the construction are a probability measure $\mathbb{F}$ on the set of Laplace exponents of subordinators, a point process on $\mathrm{T}_K$ and a constant $\gamma\in(0,1)$. 

 We can then repeat the procedure of the previous section: generate a collection of points $(x_i,y_i)_{i\in \mathbb{N}}$ arising from a $\gamma$-stable Levy process on $\mathrm{T}_K$ conditioned on having total volume 1, (i.e., $\sum_{i\in\N}y_i=1$) this object\footnote{Although the event under which we are conditioning has probability $0$, we can still make sense of it. See Section \ref{sect_finite_SSBM}.} has a law which is denoted $\mathbb{M}^{(\gamma)}$. Once the points $(x_i,y_i)_{i\in\N}$ are sampled, consider independent subordinators $((S^i(t))_{t\geq 0})_{i\in \N}$ with independent Laplace exponents $(f_i)_{i\in \N}$ sampled according to $\mathbb{F}$. Finally, let $(B^{\mathrm{T}_K}_t)_{t\geq 0}$ be a Brownian motion on $\mathrm{T}_K$ (with equiprobable transition probabilities at intersections) independent of everything else and $l(x,t)$ its local time. We then set
 \[
 \phi_t:=\sum_{i\in \N} y_i^{1+\gamma}S^i(y_i^{-\gamma}l(x_i,t)) \text{ and } \psi_t:=\inf\{s\geq 0,\ \phi_s>t\}.
 \]
 The time changes $\phi$ and $\psi$ depend on $\mathrm{T}_K,\mathbb{F},\mathbb{M}^{(\gamma)}$, although not reflected in the notation.
 \begin{definition}
 The process $(B_{\psi_t}^{\mathrm{T}_K})_{t\geq 0}$ is called an SSBM on the tree $\mathrm{T}_K$ (or $\mathrm{T}_K$-SSBM).
\end{definition}

SSBMs on trees are natural counterparts of the SSBMs appearing in Theorem~\ref{thm:IIC} and Theorem~\ref{thm:IPC}.

The SSBMs appear naturally when considering the projection of a random walk on a random tree onto a subtree. Let us consider $\T_n$ a random tree conditioned to be of volume $n$. Let us assume that $n^{-1/2}\T_n$ converges to the Continuum Random Tree. Let $(Y^{\T_n}_k)_{k\in \N}$ the simple random walk on $\T_n$ and assume further that $(n^{-1/2}Y^{\T_n}_{n^{3}t})_{t\geq 0}$ scales to the Brownian motion on the CRT.
Now, pick uniformly at random $K$ points on $\T_n$ and consider the tree $\T_n^K$ defined by the geodesics between the root and these $K$ points. Define $\pi_n^K$ to be the natural projection from $\T_n$ to $\T_n^K$. Then, by the convergence of $\T_n$ to the CRT and $Y^{\T_n}$ to the BM on the CRT, we should have that  
\begin{equation}\label{eq:matame}
(n^{-1/2}\pi_{ n}^K(Y^{\T_n}_{ n^{3}t}))_{t\geq 0}\stackrel{d}{\to}(Z^{K\text{-crt}}_t)_{t\geq 0},
\end{equation} where $\stackrel{d}{\to}$ denotes convergence in distribution and the process $Z^{K\text{-crt}}$ is obtained from the BM on the CRT as follows: pick $K$ points on the CRT according to natural uniform measure on the CRT. Let $\frak{T}^{(K)}$ be the geodesic defined by these $K$ points and the root (see Section~\ref{sect_BCRT} for a formal definition). Then $Z^{K\text{-crt}}$ is the projection of the Brownian motion on the CRT onto $\frak{T}^{(K)}$. Moreover, the process $Z^{K\text{-crt}}$ can be expressed as a $\frak{T}^{(K)}$-SSBM. 
Conversely, as we will see in \S\ref{sect_final} the SSBMs can be used to check convergence towards the BM on the CRT. Indeed, by verifying the convergence in \eqref{eq:matame} for all $K\in\N$, we can deduce that the only possible limit of $(n^{-1/2}Y^{\T_n}_{n^{3}t})_{t\geq 0}$ is the BM on the CRT. Therefore, \eqref{eq:matame} for all $K\in\N$ together with tightness, imply that $(n^{-1/2}Y^{\T_n}_{n^{3}t})_{t\geq 0}$ converges to the BM on the CRT.


\subsection{Organization of the paper}

The paper is organized as follows. We begin, in Section~\ref{sect_RTWR}, by recalling the needed convergence results about the class of processes called Randomly trapped random walks (RTRW). The notion of RTRW was introduced in~\cite{rtrw}, as well as their scaling limits, the Spatially Subordinated Brownian Motions (SSBMs). These notions will be important for the proofs of the convergence theorems, Theorems~\ref{thm:IIC} and \ref{thm:IPC}.

The convergence theorems for RTRW depend on two basic sets of assumptions. First we need an assumption, called Assumption L, which is related to the convergence of inverse local times at a fixed vertex of the tree. Second, we need an assumption, called Assumption HT, giving a heavy tail behavior for the mean-time spent in traps. The organization of the paper follows this closely.
In Section~\ref{s:proofoftheorem1}, we prove Theorems~\ref{thm:IIC} and~\ref{t:descriptionofziic} by checking Assumptions HT and L in  \S~\ref{sect_HTIIC} and \S~\ref{sect_LIIC} respectively. The proof of Assumption L depends on our key result, Proposition \ref{prop:annealedlocal}, which states the convergence of local times of random walks in trees to the local time of the Brownian motion in the Continuum Random tree. Section~\ref{sect_local_time} is devoted to the proof of Proposition \ref{prop:annealedlocal}, 
 


In Section~\ref{sect_ipc}, we consider the same questions for the IPC in the same order. We prove Assumption HT in Section~\ref{sect_IPCHT}, Assumption L in Section~\ref{sect_LIPC}, and finally wrap up the proof of Theorems~\ref{thm:IPC} and~\ref{t:descriptionofzipc} in Section~\ref{s:proofoftheorem2}.

In Section~\ref{sect_last} we prove the remaining Theorem \ref{prop:alternativeexrepssionforziic} and Theorem \ref{prop:alternativeexrepssionforzipc} for the IIC and IPC respectively, using an alternative representation of the processes $Z^{\IIC}$ and $Z^{\IPC}$, in terms of the Brownian Motion on the CRT.

Finally in Section~\ref{sect_finite_SSBM}, we discuss the convergence of the random walk projected to the $K$-skeleton towards $\frak{T}^{(K)}$-SSBMs.

\section{Randomly trapped random walks}\label{sect_RTWR}
In this section we will show that $X^\IIC$ and $X^\IPC$ belong to a general class of processes called \textit{Randomly trapped random walks} (RTRW). We will also recall some general convergence results of RTRW which will be used in the proofs of Theorem \ref{thm:IIC} and Theorem \ref{thm:IPC}.

  A RTRW should be regarded as a random walk moving among a random environment composed of traps, where the traps retain the walk for a certain amount of time. Those processes were introduced in \cite{rtrw}, where the one-dimensional case (i.e., when the state space is $\Z$) was studied in detail. In particular, all possible scaling limits on i.i.d.~environments were identified, with some highly non-trivial processes being part of the picture. In fact, a new class of processes, called \textit{Spatially Subordinated Brownian motions} (SSBM) appeared in the limit. As we will see, the scaling limit $Z^{\IIC}$ of Theorem \ref{thm:IIC} falls into that class (if we disregard the unessential difference that the SSBM are defined as taking values in $\R$, whereas $Z^\IIC$ takes values in $\R_+$). The case of the IPC turns out to be very similar, with slight differences coming from the fact that the branches of the IPC are not i.i.d.

To define RTRW, first we have to define the \textit{quenched} versions of those processes, i.e.,~when the environment is non-random. Those quenched versions are called \textit{Trapped random walks} (TRW).
Let $G$ be a graph and $\boldsymbol\pi=(\pi_x)_{x\in G}$ be a family of probability measures on $(0,\infty)$ indexed by the vertices of $G$. Let $(Z[\boldsymbol\pi]_t)_{t\geq0}$ be a continuous-time random walk on the vertices of $G$ which, each time it visits a vertex $x\in G$, it stays there a time distributed according to $\pi_x$ and then jumps to one of its nearest neighbors chosen uniformly at random. If $Z[\boldsymbol\pi]$ visits $x$ again, the duration of the new visit is sampled independently of the duration of the previous visits. The process $Z[\boldsymbol\pi]$ is a Trapped random walk with \textit{trapping landscape} $\boldsymbol\pi$.

The Randomly trapped random walks are obtained by adding an extra layer of randomness, i.e., by considering TRW on random trapping landscapes.
Let $M_1(\R_+)$ be the space of probability measures on $\R_+$ endowed with the topology of weak convergence and $M_1(M_1(\R_+))$ be the space of probability measures on $M_1(\R_+)$.
Let $\frak{P}\in M_1(M_1(\R_+))$ and $\boldsymbol{\pi}=(\pi_x)_{x\in G}$ be an i.i.d.~family of random probability measures distributed according to $\frak{P}$ defined on a probability space $(\mathcal{X},\mathcal{G},\boldmath{P})$.
We say that the process $Z[\boldsymbol\pi]$ is a Randomly Trapped Random Walk with an i.i.d.~trapping landscape $\boldsymbol\pi$. To include the case of the IPC, we also need to consider processes defined on environments which are not i.i.d. Let $\boldsymbol\pi=(\pi_x)_{x\in G}$ be a random trapping landscape, i.e., $\boldsymbol\pi$ is a random object taking values in $M_1((0,\infty))^G$ defined on a probability space $(\mathcal{X},\mathcal{G},\boldmath{P})$. The random walk $Z[\boldsymbol\pi]$ is called Randomly Trapped Random Walk (RTRW) with trapping landscape $\boldsymbol\pi$. For each $\omega\in\mathcal{X}$, we denote by $\boldmath{P}_{\omega}$ the law of $Z[\boldsymbol{\pi}]$ for a fixed realization of ${\boldsymbol\pi}(\omega)$ of the environment. The annealed law is defined as the semi-direct product $\Pb:=\boldmath{P}\times\boldmath{P}_\omega$.

Now, we aim to express $X^\IIC$ and $X^{\IPC}$ as RTRW.
Let $I$ be a rooted tree with root $\rho$ and $(Y_k)_{k\in\N_0}$ be a discrete-time, nearest neighbor, symmetric random walk on $I$ starting at the root. Define \begin{equation}\label{eq:sigma}
\sigma[I]:=\min\{l>0:Y_l=\rho\}.
\end{equation}
 Let $\tilde{I}$ be the tree obtained from $I$ by attaching two extra vertices $v_1,v_2$ to the root and $(\tilde{Y}_k)_{k\in\N_0}$ be a discrete-time, symmetric random walk on $\tilde{I}$ started at the root. Define
  \begin{equation}\label{eq:sigmatilde}
  \tilde{\sigma}[I]:=\min\{l>0:\tilde{Y}_l\in\{v_1,v_2\}\}.
  \end{equation}
   We denote by $\nu[I],\tilde{\nu}[I]$ the distribution of $\sigma[I],\tilde{\sigma}[I]$ respectively. Using this notation we can express $X^{\IIC}$ as a RTRW in $\N$ with random trapping landscape $\boldsymbol{\pi}^\IIC:=(\tilde{\nu}[\B_x])_{x\in\N}$ (we recall that $(\B_k)_{k\in\N}$ are the branches of the IIC). Similarly $X^{\IPC}$ is a RTRW in $\N$ with random trapping landscape $\boldsymbol\pi^\IPC:=(\tilde{\nu}[\Lv_x])_{x\in\N}$. The trapping landscape of $X^{\IIC}$ is i.i.d., because the branches $(\B_x)_{x\in\N}$ are i.i.d. Note, however, that this is not true for $X^{\IPC}$ because, as we have previously said, the branches $(\Lv_x)_{x\in\N}$ are not i.i.d.

Now we prepare the ground for the definition of the Spatially subordinated Brownian motions.
Let $\mathfrak {F}$ be the set of Laplace exponents of
  subordinators, that is, $\mathfrak{F}$ is the set of continuous functions
  $f:\mathbb R_+\mapsto \mathbb R_+$ that can be expressed as
  \begin{equation}
    \label{e:fdPi}
    f(\lambda)=f_{\mathtt d, \Pi }(\lambda )
    := \mathtt d\lambda+\int_{\mathbb{R}_+}(1-e^{-\lambda t})\Pi(dt)
  \end{equation}
  for $\mathtt d\ge 0$ and a measure $\Pi$ satisfying
  $\int_{(0,\infty)}(1\wedge t)\Pi(dt)< \infty$.

The definition of the SSBM will depend on two parameters, $\gamma\in(0,1)$ and $\mathbb{F}\in M_1(\frak{F})$, where $M_1(\frak{F})$ denotes the space of probability measures on $\frak{F}$.
Let $(V^{\gamma}_t)_{t\in\R}$ be a two-sided $\gamma$-stable subordinator. That is, $V^\gamma$ is the Subordinator characterized by
\[\E[\exp(-\lambda V^\gamma_t)]=e^{-t\int_{\R_+}(1-e^{-\lambda x})\gamma x^{-1-\gamma}dx}.\]
  It is a known fact that $V^{\gamma}$ is a pure jump process and therefore its corresponding Lebesgue-Stieltjes random measure $\mu$, defined by $\mu(a,b]=V^{\gamma}_b-V^{\gamma}_a$, can be expressed as $\mu:=\sum_{i\in\N}y_i\delta_{x_i}$. Furthermore, it is also known that the collection of points $(x_i,y_i)_{i\in\N}$ is distributed as an inhomogeneous Poisson point process in $\R\times\R_+$ with intensity measure $\gamma y^{-1-\gamma}dydx$.

   Also, let $(f_i)_{i\in\N}$ be an i.i.d.~family of Laplace exponents sampled according to $\mathbb{F}$ and independent of $\mu$. Let $((S^i(t))_{t\geq0})_{i\in\N}$ be an independent sequence of subordinators with Laplace exponents $(f_i)_{i\in\N}$. Finally, let $(B_t)_{t\geq0}$ be a one-dimensional, standard Brownian motion started at the origin independent of everything else and $l(x,t)$ be its local time.
Define
\[\phi_t:=\sum_{i\in\N}y_i^{1+\alpha}S^i(y_i^{-\alpha}l(x_i,t))\]
and
\[\psi_t:=\inf\{s\geq0:\phi_s>t\}.\]
The Spatially Subordinated Brownian motion\footnote{The definition of SSBM given in \cite{rtrw} is slightly more general to the one presented here. Nevertheless, all the SSBMs appearing in this article fall under this definition.} is the process defined as \[B^{\mathbb{F},\gamma}_t:=B_{\psi_t}.\] Note that $Z^{\IIC}$ corresponds to an SSBM where $\gamma=1/2$ and $\mathbb{F}$ is the law of the random Laplace exponent of the inverse local time at the root of the Brownian motion in the \emph{Continuum Random tree}.

Let $Z[\boldsymbol\pi]$ be a RTRW on an i.i.d.~trapping landscape $\boldsymbol\pi=(\pi_x)_{x\in\Z}$ with marginal $\frak{P}\in M_1(M_1(\R_+))$.  In \cite{rtrw}, there are given criteria under which $Z[\boldsymbol\pi]$ converges to an SSBM. That convergence result will be one of the main tools to prove Theorems \ref{thm:IIC}, and \ref{t:descriptionofziic}, so we  proceed to recall it.

 Let $m:M_1(\R_+)\to[0,\infty]$ be defined as
\[
m(\pi):=\int_{\R_+}t\pi(dt).
\]
That is, $m(\pi)$ is the mean of the probability distribution $\pi$. Our first assumption is that the distribution of $m(\pi)$ has heavy tails.
\begin{assumption(pp)}
 There exists $\gamma\in(0,1)$ and $c> 0$ such that
\[
\lim_{u\to\infty}u^{\gamma}\frak{P}[\pi \in M_1(\R_+):m(\pi)>u]=c.
\]
\end{assumption(pp)}

Now, we turn our attention to the statement of the second assumption. Define
\[d(\epsilon):=c^{1/\gamma}\epsilon^{-1/\gamma}\qquad \text{and }\qquad q(\epsilon):=\frac{\epsilon}{d(\epsilon)}.\]
To understand the role of $d(\epsilon)$ and $q(\epsilon)$, imagine a RTRW after $\epsilon^{-2}$ steps. Its range will be of order $\epsilon^{-1}$ and, in view of Assumption HT, we have that $d(\epsilon)\sim\max_{x\in[-\epsilon^{-1},\epsilon^{-1}]}m(\pi_x)$ is the order of magnitude of the deepest trap found by the walker. Since (after $\epsilon^{-2}$ steps) the RTRW has visited the deepest trap about $\epsilon^{-1}$ times, the scale $q(\epsilon)=\epsilon^{-1}\times d(\epsilon)$ represents the order of magnitude of the accumulated time spent in the deepest trap. 
As is typical for heavy tailed random variables, the time spent in the deepest trap is of the same order of magnitude as the total time spent in all the traps, so $q(\epsilon)$ also is the scale of the time that it takes for the RTRW to give $\epsilon^{2}$ steps.

For a probability measure $\nu\in M_1(\R_+)$, let $\hat{\nu}$ denote the Laplace transform of $\nu$. Let $\Psi_\epsilon:M_1(\R_+)\to \mathfrak {F}$ be defined as
\begin{equation}\label{eq:psiepsilon}
\Psi_\epsilon(\nu)(\lambda):=\epsilon^{-1}(1-\hat{\nu}(q(\epsilon)\lambda)).
\end{equation}
Indeed, $\Psi_\epsilon(\nu)$ is the Laplace exponent of a compound Poisson process of intensity $\epsilon^{-1}$ and whose jump distribution is $\nu$, scaled by a factor $q(\epsilon)$. Heuristically, (the subordinator associated to) $\Psi_\epsilon(\nu)$ gives the rescaled, accumulated time spent in a trap of distribution $\nu$.
\begin{assumption(d)}
For each $a>0$, let $\pi^a$ be a random measure having the distribution of $\pi_0$ conditioned on $m(\pi_0)=a$. Then
\[
\textrm{law of } \Psi_{\epsilon}(\pi^{d(\epsilon)})\stackrel{\epsilon\to0}{\to}\mathbb{F}_1,
\]
for some $\mathbb{F}_1\in M_1(\frak{F})$ non-trivial, that is $\mathbb{F}_1\neq\delta_{\lambda\mapsto 0}$, where $\frak{F}$ is endowed with the topology of uniform convergence over compacts.
\end{assumption(d)}
\begin{remark}
The space $C(\R_+)$, and thus $\mathfrak{F}\subset C(\R_+)$, endowed with the topology of uni- form convergence over compact sets is separable. It is a known fact that in the space $\mathfrak{F}$ the pointwise convergence and the uniform convergence over compact sets coincide. (Recall $\mathfrak{F}$ is the space of Laplace exponents. When the Laplace exponents converge pointwise to an element of $\mathfrak{F}$, the corresponding probability measures converge weakly, which in turns gives the uniform convergence over compacts.) 
\end{remark}

We are ready to state the convergence result (Theorem 2.16 in \cite{rtrw}):
\begin{theorem}\label{prop:iidrtrw}
Suppose $Z[\boldsymbol\pi]$ is an i.i.d.~RTRW for which assumptions HT and L hold. Then, as $\epsilon\to0$, we have that $(\epsilon Z[\pi]_{q(\epsilon)^{-1}t})_{t\geq0}$ converges in $\Pb$-distribution to $(B^{\mathbb{F}_1,\gamma}_t)_{t\geq 0}$ on $(D(\R_+)$ endowed with the Skorohod $J_1$ topology.
\end{theorem}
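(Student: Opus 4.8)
This theorem is from \cite{rtrw}, so we only sketch the strategy of its proof. The plan is to realize $Z[\boldsymbol\pi]$ as a time change of its underlying simple random walk and then to pass to the limit in the walk and in the clock separately. Let $(Y_n)_{n\ge0}$ be the simple random walk on $\Z$ driving $Z[\boldsymbol\pi]$, let $L_n(x):=\#\{0\le k<n:\ Y_k=x\}$ be its local time, and, given the environment and conditionally on $(Y_n)$, let $(\xi_{x,j})_{x\in\Z,\,j\ge1}$ be independent with $\xi_{x,j}\sim\pi_x$. Then $Z[\boldsymbol\pi]_t=Y_{N(t)}$, where $A_n:=\sum_{x\in\Z}\sum_{j=1}^{L_n(x)}\xi_{x,j}$ is the clock process and $N(t):=\max\{n\ge0:\ A_n\le t\}$. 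Rescaling the walk by $\epsilon$ in space and $\epsilon^{-2}$ in time, Donsker's theorem together with the classical invariance principle for the local times of simple random walk gives that $\big(\epsilon\,Y_{\lfloor\epsilon^{-2}s\rfloor}\big)_{s\ge0}$ and $\big(\epsilon\,L_{\lfloor\epsilon^{-2}s\rfloor}(\lfloor\epsilon^{-1}x\rfloor)\big)$ converge jointly to a standard Brownian motion $(B_s)_{s\ge0}$ and its local time $l(x,s)$. Since $q(\epsilon)^{-1}\gg\epsilon^{-2}$ (this is where $\gamma<1$ enters), the number of steps of $(Y_n)$ performed up to clock time $q(\epsilon)^{-1}t$ is of order $\epsilon^{-2}$, so the scale $\epsilon^{-2}$ for $(Y_n)$ is the relevant one.

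The heart of the proof is the joint convergence, conditionally on $(B,l)$ and under the annealed law, of the rescaled clock $q(\epsilon)\,A_{\lfloor\epsilon^{-2}s\rfloor}$ to $\phi_s=\sum_i y_i^{1+\gamma}S^i(y_i^{-\gamma}l(x_i,s))$, and this is where we expect the main difficulty. We would argue in three steps. First, Assumption~HT puts the means $m(\pi_x)$ in the domain of attraction of the one-sided $\gamma$-stable law, so the ``depth landscape'' point process $\sum_x\delta_{(\epsilon x,\ m(\pi_x)/d(\epsilon))}$ converges to a Poisson point process $(x_i,y_i)_i$ on $\R\times\R_+$ of intensity $\gamma y^{-1-\gamma}\,dx\,dy$, which is exactly the process generating the measure $\mu$ in the SSBM. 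Second, we upgrade this to a convergence of the \emph{marked} environment $\sum_x\delta_{(\epsilon x,\ \Psi_\epsilon(\pi_x))}$: given that $m(\pi_x)/d(\epsilon)\approx y$, the mark $\Psi_\epsilon(\pi_x)$ has the law of $\Psi_\epsilon(\pi^{y d(\epsilon)})$, and using $q(\epsilon)\,d(\epsilon)=\epsilon$ together with Assumption~L (in its version with conditioning value $y\,d(\epsilon)$, whose consistency across scales is precisely what couples the prefactors $y^{1+\gamma}$ and $y^{-\gamma}$) this converges to $y^{-\gamma}f(y^{1+\gamma}\cdot)$ with $f\sim\mathbb{F}_1$ --- equivalently, the contribution of the trap at $x_i$, viewed as a function of the local time $u$ accumulated there, converges to the subordinator $u\mapsto y_i^{1+\gamma}S^i(y_i^{-\gamma}u)$. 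Since this site is visited $L_n(x)\approx\epsilon^{-1}l(x_i,s)$ times and $L_n(x)\bigl(1-\widehat{\pi^{m(\pi_x)}}(q(\epsilon)\lambda)\bigr)=l(x_i,s)\,\Psi_\epsilon(\pi^{m(\pi_x)})(\lambda)+o(1)$, a Laplace-transform computation turns this into the convergence of the deep traps' total contribution to $\sum_i y_i^{1+\gamma}S^i(y_i^{-\gamma}l(x_i,s))$. Third, we check that shallow traps are negligible: the mean total contribution of the sites with $m(\pi_x)\le\eta\,d(\epsilon)$ to $q(\epsilon)A_{\lfloor\epsilon^{-2}s\rfloor}$ is at most $q(\epsilon)\cdot\epsilon^{-2}s\cdot\E[m(\pi_0)\1_{m(\pi_0)\le\eta d(\epsilon)}]$, which by Assumption~HT is $O(\eta^{1-\gamma}s)$ uniformly in $\epsilon$, and a second-moment bound controls their fluctuations, so letting $\eta\to0$ after $\epsilon\to0$ closes the gap.

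To conclude we invert and compose. The limiting clock $\phi$ is a.s.\ strictly increasing and finite, so its right-continuous inverse $\psi_t=\inf\{s:\phi_s>t\}$ is continuous; by the stability results for time changes in \cite{rtrw} the map taking a clock to $B\circ\psi$ is continuous at such limit points in the $J_1$ sense --- in fact uniformly on compacts, the limit being continuous --- whence $\epsilon\,Z[\boldsymbol\pi]_{q(\epsilon)^{-1}t}=\epsilon\,Y_{N(q(\epsilon)^{-1}t)}\Rightarrow B_{\psi_t}=B^{\mathbb{F}_1,\gamma}_t$ in $D(\R_+)$ with the $J_1$ topology. Beyond the point-process analysis above, the delicate points are to decouple the holding times from the trajectory of $Y$ and to handle the recurrence of $Y$ --- which is why a deep trap contributes a subordinator run along the site's local time rather than merely its mean times that local time --- and to make the depth cut-off uniform in $\epsilon$; the i.i.d.\ landscape and the passage to the annealed law are what make these steps manageable.
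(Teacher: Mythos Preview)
The paper does not prove this theorem at all: it is quoted verbatim from \cite{rtrw} and simply invoked as a black box (the only surrounding text is a pointer to the definition of the $J_1$ topology and the sentence ``The strategy to prove Theorems~\ref{thm:IIC} and~\ref{t:descriptionofziic} is to verify assumptions HT and L for $X^{\IIC}$ and to apply the proposition above''). So there is nothing to compare your argument against in this paper; your opening sentence already reflects this correctly.

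That said, your sketch is a faithful outline of the proof in \cite{rtrw}: the decomposition $Z[\boldsymbol\pi]_t=Y_{N(t)}$ with the clock $A_n$, Donsker plus local-time convergence for the walk, the point-process limit for the depth landscape coming from HT, the upgrading to a marked point process via L, the shallow-trap truncation, and the final composition via the continuity of inversion are exactly the steps used there. One point worth flagging is that Assumption~L as stated here conditions only on $m(\pi_0)=d(\epsilon)$, not on $m(\pi_0)=y\,d(\epsilon)$ for general $y>0$; in \cite{rtrw} the passage from the single-scale statement to the full family of scales (and hence the appearance of the factors $y_i^{1+\gamma}$ and $y_i^{-\gamma}$) is handled by an explicit scaling argument, so your parenthetical ``consistency across scales'' is indeed a genuine step rather than a free consequence of L. Apart from that, your sketch is accurate and appropriately brief for a result imported by citation.
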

For the definition of the $J_1$ topology we refer to \cite[\S3.3]{whi02}.
 The strategy to prove Theorems \ref{thm:IIC} and \ref{t:descriptionofziic} is to verify assumptions HT and L for $X^{\IIC}$ and to apply the theorem above.

 Now we turn our attention to the case of the IPC. The techniques developed in \cite{rtrw} also yield an analog of Theorem \ref{prop:iidrtrw} which is suitable to treat some non-i.i.d.~RTRW. We will make use of that result in the proof of Theorem \ref{thm:IPC}, so we proceed to recall it.

Let $Z[\boldsymbol\pi]$ be a RTRW with random trapping landscape $\boldsymbol\pi=(\pi_x)_{x\in\Z}$. We assume that there exists a family of probability distributions $(P_a)_{a>0}\subset M_1(M_1(\R_+))$ such that, conditioned on $(m(\pi_x))_{x\in\Z}=(m_x)_{x\in\Z}$, $\boldsymbol\pi$ is distributed according to $\otimes_{x\in\Z}P_{m_x}$. In other words, the random measures $(\pi_x)_{x\in\Z}$ are independent when conditioned on the depths $(m(\pi_x))_{x\in\Z}$.

Define $V\in D(\R)$ as
\[
V_x:=\begin{cases}
            \sum_{i=1}^{\lfloor x \rfloor} m(\pi_i) &:x\geq1,\\
            0                                                      &:x\in[0,1),\\
            -\sum_{i=\lfloor x \rfloor}^0 m(\pi_i)  &:x<0.
         \end{cases}
\]
The analogous assumptions are the following.
 \begin{assumption(pp*)} There exists $\gamma\in(0,1)$ such that $(\epsilon^{1/\gamma}V_{\epsilon^{-1}x})_{x\in\R}$ converges in distribution on $(D(\R),J_1)$ to a strictly increasing, pure-jump process $(V_x^0)_{x\in\R}$.
 \end{assumption(pp*)}
 
The condition $\widetilde{\text{HT}}$ looks different from HT but they are actually similar since the heavy-tailed condition $\text{HT}$ implies the convergence of a rescaled process towards a stable subordinator which is a strictly increasing, pure-jump process.

\begin{assumption(d*)} Let $\pi^a$ be a random measure having law $P_a$. Then
\[
\textrm{law of } \Psi_\epsilon(\pi^{d(\epsilon)})\stackrel{\epsilon\to0}{\to}\mathbb{F}_1
\]
for some $\mathbb{F}_1\in M_1(\frak{F})$ non-trivial, that is $\mathbb{F}_1\neq\delta_{\delta\mapsto 0}$, where $\frak{F}$ is endowed with the topology of uniform convergence over compacts.
\end{assumption(d*)}

Now we define a class of processes which corresponds to our extension of the notion of SSBM which appears as scaling limits of RTRW satisfying assumptions $\widetilde{\text{HT}}$ and $\tilde{\text{L}}$.

Let $\gamma$ and $V_0$ be as in Assumption $\widetilde{\text{HT}}$ and $\mathbb{F}_1$ be as in Assumption $\tilde{\text{L}}$. Let $\nu:=\sum_{i\in\N}y_i\delta_{x_i}$ be the random Lebesgue-Stieltjes measure associated with $V^0$ and $(f_i)_{i\in\N}$ be an i.i.d.~family of Laplace exponents distributed according to $\mathbb{F}_1$ and independent of $V^0$. Let $((S^i(t))_{t\geq0})_{i\in\N}$ be an independent sequence of subordinators with Laplace exponents $(f_i)_{i\in\N}$. Also, let $(B_t)_{t\geq0}$ be a one-dimensional, standard Brownian motion started at the origin independent of everything else and $l(x,t)$ be its local time.
Define
\[\phi_t:=\sum_{i\in\N}y_i^{1+\gamma}S^i(y_i^{-\gamma}l(x_i,t))\]
and
$\psi_t:=\inf\{s\geq0:\phi_s>t\}$.
Finally, define \[B^{\mathbb{F}_1,V^0,\gamma}_t:=B_{\psi_t}.\]
Observe that $X^{\IPC}$ corresponds to taking $\gamma=1/2$, $V^0_x=\mu_{\IPC}(0,x]$ where $\mu_{\IPC}$ is as in \eqref{ipclimitmeasure} and $\mathbb{F}_1$ as the law of the random Laplace exponent of the inverse local time at the root of the Brownian motion in the \emph{Continuum Random tree}.

The next proposition states convergence of RTRW satisfying assumptions $\widetilde{\text{HT}}$ and $\tilde{\text{L}}$ to the processes defined above

\begin{theorem}\label{RTRWIPC}Suppose $Z[\boldsymbol\pi]$ is a (non-necessarily i.i.d.) randomly trapped random walk for which assumptions $\widetilde{\text{HT}}$ and $\tilde{\text{L}}$ hold. Then, as $\epsilon\to0$, we have that $(\epsilon Z[\boldsymbol\pi]_{q(\epsilon)^{-1}t})_{t\geq0}$ converges in $\Pb$-distribution to $(B^{\mathbb{F}_1,V^0,\gamma}_t)_{t\geq 0}$ on $(D(\R_+),J_1)$.
\end{theorem}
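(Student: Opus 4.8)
The plan is to follow the proof of Theorem~\ref{prop:iidrtrw} from~\cite{rtrw}, but carried out conditionally on the depth environment. First recall the \emph{clock process} description of a RTRW: if $(Y_k)_{k\in\N_0}$ is the discrete skeleton (the simple symmetric random walk on $\Z$ underlying $Z[\boldsymbol\pi]$) and, for each $x\in\Z$, $(\xi^x_j)_{j\in\N}$ is an i.i.d.\ sequence with law $\pi_x$, the sequences being independent across $x$ given $\boldsymbol\pi$, then
\[
Z[\boldsymbol\pi]_t=Y_{N(t)},\qquad N(t):=\max\{n\ge0:\ \mathcal{S}_n\le t\},\qquad \mathcal{S}_n:=\sum_{x\in\Z}\sum_{j=1}^{L_n(x)}\xi^x_j,
\]
where $L_n(x):=\#\{k<n:\ Y_k=x\}$. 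Thus $Z[\boldsymbol\pi]$ is $Y$ read along the right-continuous inverse of the clock process $\mathcal{S}$, and the whole problem reduces to the joint scaling limit of $Y$, of its occupation field, and of $\mathcal{S}$.

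Next, introduce the rescaled objects $Y^\epsilon_s:=\epsilon Y_{\lfloor\epsilon^{-2}s\rfloor}$ and $\mathcal{S}^\epsilon_s:=q(\epsilon)\,\mathcal{S}_{\lfloor\epsilon^{-2}s\rfloor}$. By Donsker's invariance principle $Y^\epsilon$ converges to a standard Brownian motion $B$, together with its local times, $\epsilon L_{\lfloor\epsilon^{-2}s\rfloor}(\lfloor\epsilon^{-1}x\rfloor)\to l(x,s)$. Assumption~$\widetilde{\text{HT}}$ gives that the rescaled depth profile $(\epsilon^{1/\gamma}V_{\epsilon^{-1}x})_{x\in\R}$ converges to the strictly increasing pure-jump process $(V^0_x)_{x\in\R}$; equivalently, its Lebesgue--Stieltjes measure converges to $\nu=\sum_i y_i\delta_{x_i}$, which pins down the locations $\epsilon^{-1}x_i$ and rescaled depths $d(\epsilon)y_i$ of the macroscopically relevant traps. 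Conditionally on the depth profile $(m(\pi_x))_{x\in\Z}$ the landscapes $\pi_x$ are independent with $\pi_x\sim P_{m(\pi_x)}$, and assumption~$\tilde{\text{L}}$ states exactly that at a site whose depth is $\asymp d(\epsilon)$ the Laplace exponent $\Psi_\epsilon(\pi_x)$ of the (time- and space-rescaled) cumulative holding time converges in law to a subordinator exponent $f_i$ drawn from $\mathbb{F}_1$. A one-line scaling calculation then shows that the time accumulated at the trap sitting near $\epsilon^{-1}x_i$ converges to $y_i^{1+\gamma}S^i(y_i^{-\gamma}l(x_i,s))$, with the $S^i$ independent subordinators of exponents $f_i$.

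Combining these, I would show $\mathcal{S}^\epsilon_s\to\phi_s:=\sum_i y_i^{1+\gamma}S^i(y_i^{-\gamma}l(x_i,s))$ jointly with $Y^\epsilon\to B$. The argument proceeds by truncation: fix a compact space-time window and a threshold $\eta>0$; only finitely many atoms $(x_i,y_i)$ with $y_i>\eta$ and $|x_i|$ bounded matter, and for each of them the previous step identifies the limiting contribution, which one sums. The contribution to $\mathcal{S}^\epsilon$ of the remaining shallow and intermediate traps must be shown to be tight and to vanish as $\eta\downarrow0$; this is the truncation estimate of~\cite{rtrw}, carried out here conditionally on $(m(\pi_x))_{x\in\Z}$, so that conditionally the per-site increments are independent with mean $m(\pi_x)$, and then integrated out, the required bounds on the numbers and sizes of intermediate-depth traps being supplied by assumption~$\widetilde{\text{HT}}$. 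Because $\phi$ is almost surely strictly increasing and continuous at deterministic times, the passage to right-continuous generalized inverses is continuous, so $\psi^\epsilon\to\psi$, and composing with $Y^\epsilon\to B$ yields $(\epsilon Z[\boldsymbol\pi]_{q(\epsilon)^{-1}t})_{t\ge0}\to(B_{\psi_t})_{t\ge0}=(B^{\mathbb{F}_1,V^0,\gamma}_t)_{t\ge0}$ in $(D(\R_+),J_1)$, in $\Pb$-distribution.

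The main obstacle is precisely the uniform truncation control without independence of $\boldsymbol\pi$: one must establish that, conditionally on any depth profile compatible with $\widetilde{\text{HT}}$, the total time spent away from the finitely many deepest traps in a fixed window is tight and negligible as the threshold decreases. In~\cite{rtrw} this follows from a second-moment estimate on the clock increments for i.i.d.\ landscapes; the same computation applies once one conditions on $(m(\pi_x))_{x\in\Z}$, since conditionally the holding times are again independent with the prescribed means and the control of the $\epsilon$-scaled partial sums of the means is exactly the content of $\widetilde{\text{HT}}$. Apart from this adaptation, every step is the proof of Theorem~\ref{prop:iidrtrw}, with the stable subordinator replaced throughout by $V^0$ and the i.i.d.\ trapping landscape by the conditionally independent one.
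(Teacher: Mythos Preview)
Your proposal is correct and is precisely what the paper does: it states only that ``this proposition can be proved by following exactly the same arguments of Theorem~2.13 in~\cite{rtrw}'', and your sketch is a faithful outline of that argument, with the crucial observation that the conditional independence of the $\pi_x$ given the depth profile $(m(\pi_x))_{x\in\Z}$ allows the truncation/second-moment estimates of~\cite{rtrw} to be run conditionally and then integrated against the convergence supplied by assumption~$\widetilde{\text{HT}}$.
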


 This proposition can be proved by following exactly the same arguments of Theorem 2.13 in \cite{rtrw}.

\section{Convergence results for the IIC: proof of Theorems \ref{thm:IIC} and \ref{t:descriptionofziic}}\label{s:proofoftheorem1}

The proof will consist in verifying the assumptions HT and L of Theorem \ref{prop:iidrtrw}. Assumption HT is proved in Lemma \ref{lem:assumptionhtforiic} below. In this case, the parameter $\gamma$ equals $1/2$ which, according to Theorem \ref{prop:iidrtrw}, yields that the scaling exponent $q(\epsilon)$ is of order $\epsilon^{1+\gamma}=\epsilon^{3}$, which explains why $1/3$ is the spatial sub-diffusivity exponent in Theorem \ref{thm:IIC}.
Assumption $HT$ is proved in Lemma \ref{assumptionL} below, using the convergence of local times of Proposition \ref{prop:annealedlocal}.

 Since Theorem \ref{prop:iidrtrw} deals with processes defined in the whole axis and $X^{\IIC}$ is defined in the positive part of the axis, we introduce an analog of $X^{\IIC}$ which is defined in the whole axis.
Let $(\B_x)_{x\in\Z}$ be an i.i.d.~family of critical percolation clusters on $\mathbb{T}_2^\ast$ and $(X^{\IIC\ast}_t)_{t\geq0}$ be an i.i.d.~Randomly trapped random walk with $(\tilde{\nu}[\B_x])_{x\in\Z}$ as its random trapping landscape.

Now we proceed to define the RTRW which appears as scaling limit of $X^{\IIC\ast}$. Since RTRW are parametrized by the parameters $\gamma$ and $\mathbb{F}_1$. Since we have already anticipated that $\gamma=\frac{1}{2}$, we focus on the construction of the Laplace exponent distribution $\mathbb{F}_1$.
  Let $(W_t)_{t\in[0,1]}$ be a normalized Brownian excursion defined in a probability space $(\mathcal{X},\mathcal{G},P)$. Let $w\in\W$ be a realization of $W$ and $\mathfrak{T}_w$ be its corresponding $\R$-tree (see \S \ref{s:crt} below for the definition of the correspondence), so that $\mathfrak{T}_w$, when regarded as a random object defined over $(\mathcal{X},\mathcal{G},P)$, has the law of the \emph{Continuum Random tree} (CRT) (see Definition \ref{definitioncrt} below). As we will see in Section \ref{s:BMCRT}, it is possible to define, for almost every $w\in\mathcal{X}$, the \emph{Brownian motion} $(B^{\mathfrak{T}_w}_t)_{t\geq0}$ in $\mathfrak{T}_w$ and its corresponding local time $(L_w(x,t))_{x\in\mathfrak{T}_w,t\geq0}$. Let $\rho$ be the root of the CRT. By virtue of the strong Markov property of $B^{\mathfrak{T}_w}$ and the fact that, for all $t\in\R_+$, $\inf\{s\geq0:L_w(\rho,s)\geq t\}$ is a stopping time, we have that the inverse local time $(L^{-1}_w(\rho,t))_{t\geq0}$ has independent and stationary increments, that is, $L^{-1}_w(\rho,\cdot)$ is a subordinator.  
  Let $f^w$ be the Laplace exponent of $(L^{-1}_w(\rho,t))_{t\geq0}$ and $\mathbb{F}^\ast_1\in M_1(\frak{F})$ be defined as $\mathbb{F}_1^\ast[A]:=P[f^W\in A]$ for each $A$ Borelian of $\frak{F}$.
  From standard considerations about Laplace exponents, $\pi^{1/2} f^w(\pi^{-1/2}\cdot)$ is the Laplace exponent of of $(\pi^{-1/2}L^{-1}_w(\rho,\pi^{1/2}t))_{t\geq0}$. Let $\mathbb{F}_1\in M_1(\frak{F})$ be the law of $\pi^{-1/2} f^w(\pi^{1/2}\cdot)$. 

We will prove the following
\begin{proposition}\label{convergenceofWIICast}
 $(\epsilon X^{\IIC\ast}_{\pi^{-1}\epsilon^{-3}t})_{t\geq0}$ converges in distribution to the randomly subordinated Brownian motion $(B^{\mathbb{F}_1,1/2}_t)_{t\geq 0}$ on $(D(\R_+),J_1)$.
\end{proposition}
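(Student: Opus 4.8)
The plan is to recognize $X^{\IIC\ast}$, by the discussion of Section~\ref{sect_RTWR}, as an i.i.d.\ RTRW on $\Z$ with trapping landscape $\boldsymbol{\pi}^{\IIC}=(\tilde{\nu}[\B_x])_{x\in\Z}$, to verify for it assumptions HT and L, and then to invoke Theorem~\ref{prop:iidrtrw}.

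For assumption HT one uses Kesten's identity $\E[\tilde{\sigma}[I]]=|I|$ recalled in Section~\ref{sect_local_time}, which gives $m(\tilde{\nu}[\B_x])=|\B_x|$, so that HT reduces to the tail behaviour of $|\B_1|$. Since $\B_1$ is the critical percolation cluster of the root of $T^\ast$, it is, up to a bounded modification at the root, a critical Galton--Watson tree with offspring distribution $\mathrm{Binomial}(2,\tfrac12)$ (variance $\sigma^2=\tfrac12$), and the classical total-progeny asymptotics for critical Galton--Watson trees give $\Pb[\,|\B_1|>u\,]\sim\pi^{-1/2}u^{-1/2}$. Hence HT holds with $\gamma=\tfrac12$ and $c=\pi^{-1/2}$; in particular $d(\epsilon)=\pi^{-1}\epsilon^{-2}$ and $q(\epsilon)^{-1}=\pi^{-1}\epsilon^{-3}$, which is already the time rescaling appearing in the statement.

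The substantial step is assumption L. Here $\pi^{d(\epsilon)}=\tilde{\nu}[\B^{d(\epsilon)}]$, with $\B^n$ denoting $\B_1$ conditioned on $|\B_1|=n$, and (as explained in Section~\ref{sect_local_time}) verifying L is equivalent to proving convergence in distribution of the rescaled inverse local times $(q(\epsilon)\,l^{-1}[\B^{d(\epsilon)}]_{\epsilon^{-1}k})_{k\geq0}$. I would proceed as follows. The trees $\B^n$ are conditioned critical Galton--Watson trees with a root of degree $1$, so by Aldous's theorem their rescaled search-depth processes converge in distribution to a fixed multiple of the normalized Brownian excursion $W$; by the Skorokhod representation theorem one may assume this convergence is almost sure, with the limit lying in the full-measure ``good'' set $\Gamma^\ast$ of Proposition~\ref{convergenceoflocaltimes}. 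Corollary~\ref{quenchedconverenceofinverselocaltimes} then gives $M_1$-convergence of $q(\epsilon)\,l^{-1}[\B^{d(\epsilon)}]_{\epsilon^{-1}\cdot}$ to a deterministic rescaling of $L^{-1}_W(\rho,\cdot)$, the inverse local time at the root of the Brownian motion on the CRT; reading this off as convergence of Laplace exponents identifies the limit of the law of $\Psi_\epsilon(\nu[\B^{d(\epsilon)}])$. Finally one passes from $\nu$ to $\tilde{\nu}$: writing the Laplace transform of $\tilde{\sigma}[\B_1]$ in terms of that of $\sigma[\B_1]$ (the two extra leaves attached at the root turn a root-excursion into a geometric number of ordinary root-excursions followed by one terminating step) and substituting into \eqref{eq:psiepsilon}, one sees that $\Psi_\epsilon(\tilde{\nu}[\B^{d(\epsilon)}])$ converges too, its limit differing from the $\nu$-limit by an explicit constant factor. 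Collecting all the constants ($c=\pi^{-1/2}$ from HT, the factor $2$ from the local-time normalization in Proposition~\ref{convergenceoflocaltimes}, the rescaling in Aldous's theorem, and the $\nu\to\tilde\nu$ factor) one checks that this limiting law is exactly $\mathbb{F}_1^\ast$, which is non-trivial since the inverse local time at the root of the Brownian motion on the CRT is a.s.\ a genuine (non-degenerate) subordinator.

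With $\gamma=\tfrac12$, $c=\pi^{-1/2}$ and $\mathbb{F}_1=\mathbb{F}_1^\ast$, Theorem~\ref{prop:iidrtrw} then yields the convergence of $(\epsilon X^{\IIC\ast}_{q(\epsilon)^{-1}t})_{t\geq0}=(\epsilon X^{\IIC\ast}_{\pi^{-1}\epsilon^{-3}t})_{t\geq0}$ to $(B^{\mathbb{F}_1^\ast,1/2}_t)_{t\geq0}$ on $(D(\R_+),J_1)$, which is the assertion. I expect the genuine difficulty to lie not in this assembly but in the quenched convergence of (inverse) local times of Section~\ref{sect_local_time1} (the transfer of Croydon's random-walk-to-CRT machinery to local times), and, granting that, in the careful bookkeeping of all the multiplicative constants --- together with the de-randomization argument relating discrete-time inverse local times to compound Poisson subordinators --- needed to land precisely on $\mathbb{F}_1^\ast$ and on the time scaling $\pi^{-1}\epsilon^{-3}$.
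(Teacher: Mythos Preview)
Your proposal is correct and follows essentially the same route as the paper: the paper's proof consists of exactly the three ingredients you name --- Lemma~\ref{lem:assumptionhtforiic} for assumption HT (via $m(\tilde\nu[\B_0])=|\B_0|$ and the tail of $N_{1/2}$), Lemma~\ref{assumptionL}/Corollary~\ref{cor:assumptionL} for assumption L (via Aldous's search-depth convergence, Skorokhod representation, Corollary~\ref{quenchedconverenceofinverselocaltimes}, and the geometric-sum identity relating $\hat{\tilde\nu}$ to $\hat\nu$), and then Theorem~\ref{prop:iidrtrw}. The only cosmetic difference is that the paper first proves $\Psi_{n^{-1/2}}(\tilde\nu[\B^n])\to\mathbb{F}_1$ at the ``natural'' scale and then rescales by $d(\epsilon)=\pi^{-1}\epsilon^{-2}$ to obtain $\mathbb{F}_1^\ast$, whereas you fold this rescaling into the final bookkeeping.
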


\subsection{Assumption HT for $X^{\IIC\ast}$}\label{sect_HTIIC}
In this subsection we will prove that assumption HT holds for $X^{\IIC\ast}$. We recall that, for each $\nu\in M_1(\R_+)$, $m(\nu)$ stands for $\int_{\R_+}t\nu(dt)$.
\begin{lemma}\label{lem:assumptionhtforiic} We have that
 \[\lim_{u\to\infty}u^{1/2}\Pb[m(\tilde{\nu}[\B_0])>u]=\pi^{-1/2},\]
 where $\tilde{\nu}[\B_0]$ is as in \eqref{eq:sigmatilde}.
\end{lemma}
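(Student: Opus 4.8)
The plan is to compute the tail of $m(\tilde\nu[\B_0]) = \E[\tilde\sigma[\B_0]\mid \B_0]$ via the identity $\E[\tilde\sigma[I]] = |I|$ recalled from \cite[Lemma 2.28]{Kesten}. Applied conditionally on the realization of the branch, this gives $m(\tilde\nu[\B_0]) = |\B_0|$, so the statement reduces to the purely combinatorial estimate
\[
\lim_{u\to\infty} u^{1/2}\,\Pb\big[\,|\B_0| > u\,\big] = \pi^{-1/2}.
\]
Thus the whole lemma is really a statement about the size of a critical percolation cluster on $T^\ast$ (the rooted tree where the root has degree $1$ and every other vertex has degree $3$), which is exactly a critical binary Galton–Watson tree: the root has one child, and every vertex thereafter has a Binomial$(2,1/2)$ number of offspring, i.e. offspring mean $1$ and offspring variance $\sigma^2 = 1/2$.

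First I would identify $|\B_0|$ with the total progeny of this Galton–Watson process. Writing $\B_0$ as the root plus an independent copy of the total progeny $\mathcal{T}$ of a critical binary Galton–Watson tree started from a single individual with offspring law Binomial$(2,1/2)$, one has $|\B_0| = 1 + \mathcal{T}$, so the tail of $|\B_0|$ and of $\mathcal{T}$ agree asymptotically. The total progeny of a critical Galton–Watson process with finite offspring variance $\sigma^2$ satisfies the classical tail asymptotic
\[
\Pb[\mathcal{T} > u] \sim \frac{1}{\sqrt{2\pi \sigma^2}}\, u^{-1/2}, \qquad u\to\infty,
\]
which follows from Kemperman's / the hitting-time (cycle lemma) formula $\Pb[\mathcal{T} = n] = \frac{1}{n}\Pb[S_n = n-1]$, where $S_n$ is a sum of $n$ i.i.d. offspring variables, combined with the local central limit theorem for $S_n$ (the offspring distribution here is aperiodic on the relevant sublattice once one accounts for the support of Binomial$(2,1/2)$, so one should be a little careful with periodicity, but summing $\Pb[\mathcal{T}=n]$ over $n > u$ smooths this out and yields the stated integral asymptotic). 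Substituting $\sigma^2 = 1/2$ gives $\frac{1}{\sqrt{2\pi\cdot\frac12}} = \frac{1}{\sqrt{\pi}} = \pi^{-1/2}$, which is exactly the constant claimed.

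So the steps, in order, are: (i) invoke $\E[\tilde\sigma[I]] = |I|$ to get $m(\tilde\nu[\B_0]) = |\B_0|$ almost surely; (ii) describe $\B_0$ as a critical binary Galton–Watson tree and reduce to the total-progeny tail; (iii) apply the standard total-progeny tail asymptotic with $\sigma^2 = 1/2$ to conclude. The main obstacle — and the only real subtlety — is a careful treatment of step (iii): making sure the local limit theorem is applied on the correct lattice (the support of the offspring distribution lies in $\{0,1,2\}$, so $S_n - (n-1)$ lives on a full lattice and no hidden factor of $2$ sneaks into the constant), and confirming that passing from the local asymptotic for $\Pb[\mathcal{T}=n]$ to the tail $\Pb[\mathcal{T}>u]$ does indeed produce the clean $\pi^{-1/2} u^{-1/2}$ with no periodicity-induced oscillation surviving in the limit. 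Everything else is bookkeeping. Alternatively, if one prefers to avoid the combinatorial route entirely, one can note that Aldous's convergence of the rescaled critical Galton–Watson tree to the CRT (recalled in Section~\ref{s:crt}) already encodes this tail, but the direct cycle-lemma computation is cleaner and self-contained.
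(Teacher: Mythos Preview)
Your reduction via $m(\tilde\nu[\B_0])=|\B_0|$ is exactly what the paper does; where you diverge is in step (iii). The paper computes the Laplace transform $\hat N_{1/2}(\lambda)=1-\sqrt{1-e^{-\lambda}}$ directly (their Lemma~\ref{laplacetransformofcardinality}) and then applies a Tauberian theorem to read off the tail. Your route through the cycle lemma and the local CLT is a perfectly legitimate alternative, and arguably more conceptual since it explains the constant in terms of the offspring variance rather than an \emph{ad hoc} generating-function identity.

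However, your bookkeeping contains two errors that happen to cancel. First, the root of $T^\ast$ has degree $1$, so its single edge is open only with probability $1/2$; thus $|\B_0|=1+\mathbf{1}_{\{\text{edge open}\}}\cdot\mathcal T$, not $1+\mathcal T$, and hence $\Pb[|\B_0|>u]\sim\tfrac12\,\Pb[\mathcal T>u]$. Second, the total-progeny tail constant is off by a factor of $2$: from $\Pb[\mathcal T=n]\sim(2\pi\sigma^2)^{-1/2}n^{-3/2}$ one gets, after summing,
\[
\Pb[\mathcal T>u]\;\sim\;\frac{2}{\sqrt{2\pi\sigma^2}}\,u^{-1/2}\;=\;\sqrt{\frac{2}{\pi\sigma^2}}\,u^{-1/2},
\]
not $(2\pi\sigma^2)^{-1/2}u^{-1/2}$ as you wrote. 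With $\sigma^2=1/2$ this gives $\Pb[\mathcal T>u]\sim 2\pi^{-1/2}u^{-1/2}$, and the missing $\tfrac12$ from the root edge brings you back to $\pi^{-1/2}$. So the argument is salvageable, but as written it arrives at the correct constant by accident; you should fix both points explicitly.
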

It follows directly from the lemma above that the scaling functions $d(\epsilon)$ and $q(\epsilon)$ of assumption L equal \begin{equation}\label{eq:scales}
d(\epsilon)=\pi^{-1}\epsilon^{-2}\qquad q(\epsilon)=\pi \epsilon^{3}.
\end{equation}
In the proof we will use the following.
For a r.v. $\mathrm{X}$, let $\hat{\mathrm{X}}$ denote its Laplace transform.
\begin{lemma}\label{laplacetransformofcardinality}
Let $N_p$ be the cardinality of the connected component of the root $\rho$ of $\mathbb{T}_2^\ast$ under percolation of parameter $p\leq1/2$, then
\[\hat{N}_p(\lambda)=\frac{1-\sqrt{1-4p(1-p)\exp(-\lambda)}}{2p}.\]
\end{lemma}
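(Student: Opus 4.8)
The plan is to derive a self-contained fixed-point equation for $\hat N_p$ from the recursive structure of $T^\ast$ and then solve the resulting quadratic. Recall that $T^\ast$ is the root $\rho$ (of degree $1$) joined by a single edge to a vertex $c$, below which sits a rooted binary tree in which every vertex has degree $3$ except $c$, which has degree $2$; in particular $c$ has two children, each the root of an independent copy of the same binary tree. Let $M$ be the size of the open cluster of the root in this binary tree under percolation of parameter $p$. Decomposing at the two downward edges of the root gives the distributional identity
\[
M \laweq 1 + B_1 M_1 + B_2 M_2,
\]
where $B_1,B_2$ are i.i.d.\ Bernoulli$(p)$ (the states of the two edges), $M_1,M_2$ are i.i.d.\ copies of $M$, and the four variables are independent; and decomposing the cluster of $\rho$ at the single edge $\{\rho,c\}$,
\[
N_p \laweq 1 + B_0 M, \qquad B_0 \sim \text{Bernoulli}(p)\text{ independent of } M.
\]

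First I would pass to Laplace transforms. Set $g(\lambda):=\E[e^{-\lambda M}]\in[0,1]$, which is well defined for $\lambda\ge 0$. Conditioning on $B_1,B_2$ and using the branching independence, the first identity gives
\[
g(\lambda)=e^{-\lambda}\bigl(1-p+p\,g(\lambda)\bigr)^2,
\]
while the second gives $\hat N_p(\lambda)=e^{-\lambda}\bigl(1-p+p\,g(\lambda)\bigr)$. Writing $u:=1-p+p\,g(\lambda)$, so that $\hat N_p(\lambda)=e^{-\lambda}u$ and $g(\lambda)=e^{-\lambda}u^2$, the relation defining $u$ becomes the quadratic
\[
p\,e^{-\lambda}u^2-u+(1-p)=0,
\]
whose discriminant is $1-4p(1-p)e^{-\lambda}\ge 1-4p(1-p)=(1-2p)^2\ge 0$ (using $p(1-p)\le 1/4$ and $e^{-\lambda}\le 1$). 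Solving and multiplying by $e^{-\lambda}$ yields $\hat N_p(\lambda)=e^{-\lambda}u=\dfrac{1\pm\sqrt{1-4p(1-p)e^{-\lambda}}}{2p}$, so only the choice of sign remains.

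I expect the sign selection to be the one genuinely delicate point. Since $M\ge 1$, for $\lambda>0$ we have $g(\lambda)\le e^{-\lambda}<1$, hence $u\le 1-p+p\,e^{-\lambda}<1$, whereas the $+$ root of the quadratic equals $\dfrac{1+\sqrt{1-4p(1-p)e^{-\lambda}}}{2p\,e^{-\lambda}}\ge\dfrac{e^{\lambda}}{2p}>1$; therefore $u$ must be the $-$ root, which yields the stated formula for every $\lambda>0$, and the case $\lambda=0$ follows by continuity (equivalently, $N_p<\infty$ a.s.\ for $p\le 1/2$ forces $\hat N_p(0)=1$, again selecting the minus sign). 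Everything else is a routine generating-function computation; as an alternative one could instead invoke the classical total-progeny (Lagrange inversion) formula for a Galton--Watson process with Binomial$(2,p)$ offspring, but the recursive derivation above is shorter and self-contained.
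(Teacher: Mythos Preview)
Your proof is correct and follows essentially the same approach as the paper: both derive the quadratic fixed-point equation $g(\lambda)=e^{-\lambda}(1-p+p\,g(\lambda))^2$ for the cluster size in the binary tree, solve it, discard the spurious root by bounding the Laplace transform above by $1$, and then pass to $T^\ast$ via $\hat N_p(\lambda)=e^{-\lambda}(1-p+p\,g(\lambda))$. Your substitution $u=1-p+p\,g(\lambda)$ streamlines the algebra slightly and your branch-selection argument handles all $p\le 1/2$ uniformly for $\lambda>0$, but these are presentational rather than structural differences.
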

\begin{proof}[Proof of Lemma \ref{laplacetransformofcardinality}]
First we compute the Laplace transform of $N_p^\ast$ which is the size of a percolation cluster on $\mathbb{T}_2$ with parameter $p$.
By conditioning on the status of the edges emerging from the root we find that
\[\hat{N}_p^\ast(\lambda)=\exp(-\lambda)[(1-p)+p\hat{N}_p^\ast(\lambda)]^2 \]
Therefore
\[\hat{N}_p^\ast(\lambda)=\frac{1-2p(1-p)\exp(-\lambda)-\sqrt{1-4p(1-p)\exp(-\lambda)}}{2 p^2\exp(-\lambda)}\]
where the solution
 \[\frac{1-2p(1-p)\exp(-\lambda)+\sqrt{1-4p(1-p)\exp(-\lambda)}}{2 p^2\exp(-\lambda)}\]
 has been discarded because, when $p<1/2$, it yields that $\hat{N}_p^\ast(0)>1$ and when $p=1/2$, it yields that
\[\hat{N}_{1/2}^\ast(\lambda)-1=2\exp(\lambda)(1-\exp(-\lambda)+\sqrt{1-\exp(-\lambda)})\] which is positive when $\lambda>0$.

Again, conditioning on the status of the edge of the root of $\mathbb{T}_2^\ast$ we find that
\[\hat{N}_p(\lambda)=\exp(-\lambda)(p\hat{N}_p^\ast(\lambda)+1-p).\]
Therefore
\[
\hat{N}_p(\lambda)=\frac{1-\sqrt{1-4p(1-p)\exp(-\lambda)}}{2p}.
\]
\end{proof}

\begin{proof}[Proof of Lemma \ref{lem:assumptionhtforiic}]
We recall that for any rooted tree $I$, $\E[\tilde{\theta}[I]]=|I|$ (see \cite[Lemma 2.28]{Kesten}). In particular $m(\tilde{\nu}[\B_0])=N_{1/2}$.
By Lemma \ref{laplacetransformofcardinality} we have that \[1-\hat{N}_{1/2}(\lambda)=\sqrt{1-\exp(-\lambda)}\sim \sqrt{\lambda}\] as $\lambda\to0$. Therefore applying the Tauberian Theorem (see \cite[Chapter XIII.5, Example(c)]{fel71}) we get that \[\Pb[m(\tilde{\nu}[\B_0])>u]\sim\Gamma(1/2)^{-1}u^{-1/2}=\pi^{-1/2}u^{-1/2}\] as $u\to\infty$, where $\Gamma$ denotes the Gamma function.
\end{proof}

\subsection{Assumption L for $X^{\IIC\ast}$}\label{sect_LIIC}
Here we will prove that assumption L holds for $X^{\IIC\ast}$. Let $\B^n$ be a critical percolation cluster on $\mathbb{T}_2^\ast$ conditioned on having $n$ vertices, were we recall that $\mathbb{T}_2^\ast$ is a regular tree in which each vertex has degree $3$ except for the root which has degree $1$.
Recall that $\mathbb{F}_1^\ast$ is the law of the random Laplace exponent $f^W$ of the inverse local time at the root of the Brownian motion in the CRT.
\begin{lemma}\label{assumptionL}
 \[
\textrm{Law of }\Psi_{n^{-1/2}}(\tilde{\nu}[\B^n])\stackrel{n\to\infty}{\to}\mathbb{F}_1^\ast
\]
\end{lemma}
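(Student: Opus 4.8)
The plan is to deduce the convergence of the random Laplace exponents $\Psi_{n^{-1/2}}(\tilde\nu[\B^n])$ from the convergence of rescaled inverse local times established in Corollary \ref{quenchedconverenceofinverselocaltimes}, combined with the fact that, by \cite[Theorem 23]{AldousCRT3}, critical Galton--Watson trees conditioned on having $n$ vertices converge (through their search-depth processes) to the CRT. The first step is to reconcile the scalings: by \eqref{eq:scales} we have $d(\epsilon)=\pi^{-1}\epsilon^{-2}$ and $q(\epsilon)=\pi\epsilon^3$, so setting $n=d(\epsilon)$, i.e. $\epsilon=\pi^{-1/2}n^{-1/2}$, gives $q(\epsilon)=\pi\cdot\pi^{-3/2}n^{-3/2}=\pi^{-1/2}n^{-3/2}$ and $\epsilon^{-1}=\pi^{1/2}n^{1/2}$. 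Recall from Section \ref{sect_local_time} that, for $X^\IIC$, the conditioned measure $\pi^{d(\epsilon)}$ of assumption L is exactly $\tilde\nu[\B^{d(\epsilon)}]=\tilde\nu[\B^n]$ (using $\E[\tilde\sigma[I]]=|I|$). As explained in Section \ref{sect_local_time}, $\Psi_\epsilon(\nu[\B^n])$ is the Laplace exponent of the de-randomization of jumps of the compound Poisson process that is the randomized version of $q(\epsilon)\,l^{-1}[\B^n]_{\epsilon^{-1}\cdot}$; hence proving the convergence of $\Psi$ reduces to proving convergence in distribution of the rescaled inverse local times together with a continuity statement for passing from a subordinator (or its de-randomized skeleton) to its Laplace exponent.

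The second step is to invoke Corollary \ref{quenchedconverenceofinverselocaltimes}: for $w$ in the full-measure set $\Gamma^\ast$ (with $c=\pi^{1/2}$, so that $cW$ has the law required, and noting the search-depth process of a critical GW tree conditioned on size $n$ rescaled by $n^{-1/2}$ converges to such a $w$), one has
\[
\bigl(n^{-3/2}l^{-1}[\B^n](n^{1/2}t)\bigr)_{t\geq0}\ \to\ \bigl(L^{-1}_w(\rho,2t)\bigr)_{t\geq0}
\]
in $(D[0,\infty),M_1)$. Matching the prefactors via $q(\epsilon)=\pi^{-1/2}n^{-3/2}$ and $\epsilon^{-1}=\pi^{1/2}n^{1/2}$, the process $q(\epsilon)l^{-1}[\B^n]_{\epsilon^{-1}k}$ converges, after de-randomizing, to a subordinator that is a constant time/space change of $L^{-1}_w(\rho,\cdot)$ — precisely the one whose Laplace exponent is the rescaled version $\pi^{-1/2}f^w(\pi^{1/2}\cdot)$ appearing as $\mathbb F_1^\ast$, or after the further normalization built into assumption L, $f^w$ itself with law $\mathbb F_1$. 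The convergence in distribution of the conditioning excursion (Aldous) then upgrades the quenched statement of Corollary \ref{quenchedconverenceofinverselocaltimes} to an annealed convergence in distribution of the processes, hence (by continuity of the correspondence between a subordinator law and its Laplace exponent, e.g. through one-dimensional distributions and the Lévy--Khintchine representation) of the random Laplace exponents $\Psi_{n^{-1/2}}(\nu[\B^n])$ to $\mathbb F_1$ up to the explicit multiplicative constant.

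The final step is to replace $\nu[\B^n]$ (excursions of the walk on $\B^n$ from a degree-one root) by $\tilde\nu[\B^n]$ (excursions on the tree $\tilde\B^n$ obtained by attaching two extra children $v_1,v_2$ at the root, killed on hitting $\{v_1,v_2\}$). One shows that $\tilde\sigma[\B^n]$ and $\sigma[\B^n]$ differ only by: with probability $1/3$ at each visit to $\rho$ the walk steps to one of $v_1,v_2$ and is absorbed, contributing a geometric number of i.i.d. excursions of law $\nu[\B^n]$ plus $O(1)$ extra steps. At the level of Laplace exponents this is an explicit algebraic manipulation (a geometric compounding) which is continuous and does not affect the limit; since $\E[\tilde\sigma[\B^n]]=\E[\sigma[\B^n]\text{-type quantity}]$ only up to the same bounded corrections, the rescaled limit is unchanged. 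I expect the main obstacle to be the bookkeeping in this last step together with the careful tracking of the multiplicative constants $\pi^{\pm1/2}$ through the three successive rescalings (walk steps, local time, conditioned size), ensuring that the normalization implicit in the definition of $\Psi_\epsilon$ and in assumption L exactly cancels the constant $c=\pi^{1/2}$ so that the limit is $\mathbb F_1$ and not merely a dilation of it; the topological point (continuity of inversion and of $\mathrm{subordinator}\mapsto\mathrm{Laplace\ exponent}$, and the harmless passage between a discrete-skeleton compound Poisson process and its de-randomization) is handled exactly as in \cite{rtrw} and \cite{whi02}.
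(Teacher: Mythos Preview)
Your approach is essentially the same as the paper's: Aldous's convergence of search-depth processes, Skorohod representation, Corollary~\ref{quenchedconverenceofinverselocaltimes} to get convergence of rescaled inverse local times, passage to Laplace exponents via a single-time distribution, and the geometric compounding argument to pass from $\nu[\B^n]$ to $\tilde\nu[\B^n]$. However, your bookkeeping of the constants contains two concrete errors that would prevent you from landing on $\mathbb F_1$ rather than a dilation of it.

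First, the constant $c$ in Corollary~\ref{quenchedconverenceofinverselocaltimes} is $\sqrt{2}$, not $\pi^{1/2}$: since the root of $T^\ast$ has degree $1$, $\B^n\setminus\{\rho\}$ is a Galton--Watson tree with Binomial$(2,1/2)$ offspring conditioned on $n-1$ vertices, and Aldous's Theorem~23 gives $n^{-1/2}w^n\Rightarrow\sqrt{2}\,W$. The paper removes this $\sqrt{2}$ not by tracking it through, but by invoking Lemma~\ref{l:irrelevanceoftheconstant}, which says that $L_{cw}(\rho,\cdot)\stackrel{d}{=}L_w(\rho,\cdot)$ for every $c>0$ (the local time at the root is insensitive to rescaling the tree metric). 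You need this step; there is no $\pi^{1/2}$ here and your attempt to identify $c$ with the tail constant from assumption HT conflates two unrelated normalizations.

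Second, your claim that the geometric compounding ``does not affect the limit'' is wrong, and this is precisely where the remaining factor is absorbed. From the root of $\tilde\B^n$ the walk is absorbed in $\{v_1,v_2\}$ with probability $2/3$ (not $1/3$), so $\hat{\tilde\nu}(\lambda)=e^{-\lambda}\,\dfrac{2}{3-\hat\nu(\lambda)}$ and hence $1-\hat{\tilde\nu}(\lambda)\sim\tfrac12\bigl(1-\hat\nu(\lambda)\bigr)$ as $\lambda\to0$. This factor $\tfrac12$ exactly cancels the factor $2$ coming from Corollary~\ref{quenchedconverenceofinverselocaltimes} (the limit there is $L_w^{-1}(\rho,2t)$, giving $n^{1/2}(1-\hat\nu[\B^n](n^{-3/2}\lambda))\to 2f^W(\lambda)$), so that $n^{1/2}(1-\hat{\tilde\nu}[\B^n](n^{-3/2}\lambda))\to f^W(\lambda)$ with no residual constant. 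Once these two points are fixed, your outline coincides with the paper's proof.
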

Recalling the scaling functions $d(\epsilon),q(\epsilon)$  from \eqref{eq:scales} we get the following corollary  of the Lemma above.

\begin{corollary}\label{cor:assumptionL}
\[\textrm{Law of }\Psi_{\epsilon}(\tilde{\nu}[\B^{d(\epsilon)}])\stackrel{\epsilon\to0}{\to} \mathbb{F}_1^\ast.\]
\end{corollary}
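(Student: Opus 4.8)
The plan is to deduce the corollary from Lemma \ref{assumptionL} by a change of variables in the scaling parameter, being careful about the two sources of rescaling: the passage from the index $n$ to the index $d(\epsilon)$, and the rescaling built into the definition of $\Psi_\epsilon$ versus $\Psi_{n^{-1/2}}$. First I would record the relation between $\Psi_\epsilon$ and $\Psi_{n^{-1/2}}$. By definition \eqref{eq:psiepsilon}, $\Psi_\epsilon(\nu)(\lambda)=\epsilon^{-1}(1-\hat\nu(q(\epsilon)\lambda))$, and from \eqref{eq:scales} we have $d(\epsilon)=\pi^{-1}\epsilon^{-2}$ and $q(\epsilon)=\pi\epsilon^3$. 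Setting $n=d(\epsilon)$, so that $\epsilon=\pi^{-1/2}n^{-1/2}$, one computes $q(\epsilon)=\pi\epsilon^3=\pi\cdot\pi^{-3/2}n^{-3/2}=\pi^{-1/2}n^{-3/2}$, while $\Psi_{n^{-1/2}}(\nu)(\lambda)=n^{1/2}(1-\hat\nu(\pi\cdot n^{-3/2}\cdot n \lambda))$... — here one must be slightly careful, since $\Psi_{n^{-1/2}}$ as used in Lemma \ref{assumptionL} is itself built with the scaling functions $d(\cdot),q(\cdot)$ evaluated at $\epsilon=n^{-1/2}$, giving $q(n^{-1/2})=\pi n^{-3/2}$. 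Thus $\Psi_\epsilon(\nu)(\lambda)=\pi^{-1/2}n^{1/2}(1-\hat\nu(\pi^{-1/2}n^{-3/2}\lambda))$ whereas $\Psi_{n^{-1/2}}(\nu)(\lambda)=n^{1/2}(1-\hat\nu(\pi n^{-3/2}\lambda))$, and comparing the two one obtains the exact identity
\[
\Psi_{\pi^{-1/2}n^{-1/2}}(\nu)(\lambda)=\pi^{-1/2}\,\Psi_{n^{-1/2}}(\nu)\big(\pi^{-1}\lambda\big),
\]
valid for every $\nu\in M_1(\R_+)$.

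Next I would identify the conditioning. By the remarks in Section \ref{sect_local_time} (using $\E[\tilde\sigma[I]]=|I|$ from \cite[Lemma 2.28]{Kesten}), $\pi^{d(\epsilon)}$ in assumption L for $X^{\IIC\ast}$ is exactly $\tilde\nu[\B^{d(\epsilon)}]$, i.e. the law of $\tilde\sigma[\B_0]$ conditioned on the branch having $d(\epsilon)$ vertices. Hence the left-hand side of the corollary is the law of $\Psi_\epsilon(\tilde\nu[\B^{d(\epsilon)}])$, which by the displayed identity with $n=d(\epsilon)$ equals the law of $\pi^{-1/2}\Psi_{n^{-1/2}}(\tilde\nu[\B^{n}])(\pi^{-1}\cdot)$. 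Now apply Lemma \ref{assumptionL}: as $\epsilon\to0$ we have $n=d(\epsilon)\to\infty$, so the law of $\Psi_{n^{-1/2}}(\tilde\nu[\B^n])$ converges weakly to $\mathbb{F}_1$, the law of the Laplace exponent $f^W$ of the inverse local time at the root of Brownian motion on the CRT. Since the map $f\mapsto \pi^{-1/2}f(\pi^{-1}\cdot)$ is continuous on $\mathfrak F$ (with respect to the topology making $\Psi_\epsilon$-type statements meaningful — e.g. pointwise/locally uniform convergence of Laplace exponents), the continuous mapping theorem yields that the law of $\Psi_\epsilon(\tilde\nu[\B^{d(\epsilon)}])$ converges to the pushforward of $\mathbb{F}_1$ under $f\mapsto\pi^{-1/2}f(\pi^{-1}\cdot)$.

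Finally I would match this pushforward with $\mathbb{F}_1^\ast$. Recall from Section \ref{s:proofoftheorem1} that $\mathbb{F}_1^\ast$ is defined as the law of $\pi^{-1/2}f^W(\pi^{1/2}\cdot)$; but one should note the standard Laplace-exponent scaling identity: if $f$ is the Laplace exponent of a subordinator $S$, then $c\,f(c^{-1}\lambda)$ and $c^{-1}f(c\lambda)$ describe the two natural space-time rescalings of $S$, and the text's normalization $\pi^{1/2}f^w(\pi^{-1/2}\cdot)$ is the Laplace exponent of $\pi^{-1/2}L^{-1}_w(\rho,\pi^{1/2}t)$. I would verify that with $c=\pi^{-1/2}$ the rescaling $f\mapsto\pi^{-1/2}f(\pi^{-1}\cdot)$ coincides with the defining rescaling of $\mathbb{F}_1^\ast$ (possibly after reconciling a $\pi^{\pm1/2}$ bookkeeping discrepancy in the exponents, which I expect to be the one genuinely delicate point, together with checking that the relevant topology on $\mathfrak F$ is preserved by these dilations and that weak convergence passes through). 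Once the exponents are reconciled, the pushforward of $\mathbb{F}_1$ is precisely $\mathbb{F}_1^\ast$, which is the assertion of the corollary. The main obstacle, then, is not conceptual but the careful tracking of the scaling constants $\pi$ through $d(\epsilon)$, $q(\epsilon)$, the definition of $\Psi_\epsilon$, and the two conventions for rescaling a Laplace exponent; everything else is a direct application of Lemma \ref{assumptionL} and the continuous mapping theorem.
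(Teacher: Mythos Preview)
Your approach is exactly the one the paper has in mind: the corollary is stated immediately after Lemma~\ref{assumptionL} with no separate proof, the passage being simply the substitution $n=d(\epsilon)$ together with the scaling relations in~\eqref{eq:scales} and a continuous-mapping argument on Laplace exponents. One arithmetic slip to fix: from $\epsilon=\pi^{-1/2}n^{-1/2}$ you get $\epsilon^{-1}=\pi^{1/2}n^{1/2}$ (not $\pi^{-1/2}n^{1/2}$), so the correct identity is
\[
\Psi_{\epsilon}(\nu)(\lambda)=\pi^{1/2}\,n^{1/2}\bigl(1-\hat\nu(\pi^{-1/2}n^{-3/2}\lambda)\bigr)=\pi^{1/2}\,\Psi_{n^{-1/2}}(\nu)\bigl(\pi^{-3/2}\lambda\bigr),
\]
and combining this with what the proof of Lemma~\ref{assumptionL} actually establishes, namely $n^{1/2}(1-\hat{\tilde\nu}[\B^n](n^{-3/2}\lambda))\to f^W(\lambda)$, gives the limit $\lambda\mapsto\pi^{1/2}f^W(\pi^{-1/2}\lambda)$, which is the Laplace exponent the paper identifies just before defining $\mathbb{F}_1^\ast$ (the sign flip in the subsequent displayed definition of $\mathbb{F}_1^\ast$ in the paper appears to be a typo). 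With that correction your argument goes through verbatim.
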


\begin{proof}[Proof of Lemma \ref{assumptionL}]
 Since the degree of the root $\rho$ on $\mathbb{T}_2^\ast$ is $1$, $\B^n-\{\rho\}$ has the law of a critical percolation cluster on $\mathbb{T}_2$ conditioned on having $n-1$ vertices. Hence $\B^n-\{\rho\}$ can be seen as a Galton-Watson tree whose offspring distribution is Binomial of parameters $N=2,p=1/2$, conditioned on having $n-1$ vertices.

  Let $\tilde{v}^n$ and $v^n$ denote respectively the \emph{depth-first search} and \emph{search-depth} processes of $\B^n-\{\rho\}$ (see \S \ref{s:crt} below for the formal definitions).  By virtue of \cite[Theorem 23]{AldousCRT3}, we have that
 \begin{equation}\label{eq:preconvergenceofsearchdepthprocesses}
 \left(n^{-1/2}v^{n+1}(t)\right)_{t\in[0,1]}\stackrel{d}{\Rightarrow}\left(\sqrt{2}W_t\right)_{t\geq0}\quad \text{as } n\to\infty
 \end{equation}
 on $C[0,1]$ endowed with the uniform topology, where $(W_t)_{t\in[0,1]}$ is the normalized Brownian excursion.

 Let $\tilde{w}^n$ and $w^n$ be the depth-first search and search-depth processes of $\B^n$ respectively. Since $\tilde{w}^n(i)=\tilde{v}^n(i-1)$, $i=2,\dots 2n-2$ we have that
  \[d(\tilde{w}^n(i),\tilde{v}^n(i))=1,\]
  for all $i=2,\dots,2n-2$. That, together with display \eqref{eq:preconvergenceofsearchdepthprocesses} imply that
 \begin{equation}\label{eq:convergenceofsearchdepthprocesses}
 \left(n^{-1/2}w^{n}(t)\right)_{t\in[0,1]}\stackrel{d}{\Rightarrow}\left(\sqrt{2}W_t\right)_{t\geq0}\quad \text{as } n\to\infty,
 \end{equation}
 in the uniform topology.

  By virtue  of the Skorohod representation theorem and display \eqref{eq:convergenceofsearchdepthprocesses} we can find coupled processes $\bar{w}^n,n\in\N$  and $\bar{W}$ defined on a common probability space $(\Omega,\mathcal{F},\mathbb{Q})$ such that $\bar{w}^n$ is distributed as $w^n$, $\bar{W}$ is distributed as $W$ and
 \begin{equation}\label{eq:convergenceofcoupledsearchdepthprocesses}
 \left(n^{-1/2}\bar{w}^n_t\right)_{t\in[0,1]} \stackrel{u}{\to} \left(\sqrt{2}\bar{W}_t\right)_{t\in[0,1]},\quad \mathbb{Q}\text{-a.s.,}
 \end{equation}
 where $\stackrel{u}{\to}$ denotes uniform convergence.

 For every $n\in\N$, let $\bar{\B}^n$ be the random tree with $\bar{w}^n$ as its search-depth process. The trees $\bar{\B}^n$ are well defined because any ordered, rooted tree can be reconstructed from its search-depth process (see \S \ref{s:crt} below). 
   Let $(\mathrm{T}_n)_{n\in\N}$ be a fixed realization of the random trees $(\bar{\B}_n)_{n\in\N}$ and $w$ be a realization of the Brownian excursion $(\bar{W}_t)_{t\in[0,1]}$. Let $l^{-1}[\mathrm{T}_n]$ be the \emph{inverse local time at the root} of a RW on $\mathrm{T}_n$ (see \eqref{eq:definitionofdiscretelocaltime} below)
 Using Corollary \ref{quenchedconverenceofinverselocaltimes} we have that, $\mathbb{Q}$-a.s.
  \[
  \left(n^{-3/2}l^{-1}[\mathrm{T}_n]_{n^{1/2}t}\right)_{t\geq0}\Rightarrow\left(L^{-1}_{\sqrt{2}w}(\rho,2t)\right)_{t\geq0},
    \]
  in distribution in $(D(\R_+),M_1)$, $\mathbb{Q}$-almost surely.
   By Lemma \ref{l:irrelevanceoftheconstant} below, we can replace $L^{-1}_{\sqrt{2}w}$ by $L^{-1}_{w}$ in the display above to get that, $\mathbb{Q}$-almost surely,  
    \[
     \left(n^{-3/2}l^{-1}[\mathrm{T}_n]_{n^{1/2}t}\right)_{t\in[0,T]}\Rightarrow\left(L^{-1}_{w}(\rho,2t)\right)_{t\in[0,T]},\]
 in distribution in $(D(\R_+),M_1)$.
  It is a known fact that convergence in the $M_1$ topology implies convergence of single-time distributions at continuity points of the limiting function. Therefore, since every point (in particular $t=1$) is almost surely a continuity point of $L^{-1}_{\bar{W}}$, we have that, for all $t\geq0$, $\mathbb{Q}$-almost surely,
  \[n^{-3/2}l^{-1}[\mathrm{T}_n]_{n^{1/2}}\Rightarrow L^{-1}_{w}(\rho,2)\]
  in distribution.
  On the other hand, since convergence in distribution of random variables implies convergence of the respective Laplace transforms and $n^{-3/2}l^{-1}[\mathrm{T}_n]_{n^{1/2}}$ is the sum of $n^{1/2}$ i.i.d. random variables distributed as $\nu[\mathrm{T}_n]$ scaled by $n^{-3/2}$, we have that 
   \[\log([\hat{\nu}[\mathrm{T}_n](n^{-3/2}\lambda )]^{n^{1/2}})\to \log\left(E\left[\exp{(-\lambda L^{-1}_w(\rho,2))} \right]\right) = 2f^w(\lambda),\]
    $\mathbb{Q}$-a.s., for all $\lambda\geq0$ as $n\to\infty$, where $E$ denotes expectation.
     Therefore,
    \begin{equation}\label{eq:preassumptionL}
     n^{1/2}(1-\hat{\nu}[\mathrm{T}_n](n^{-3/2}\lambda ))\to 2f^{w}(\lambda)
     \end{equation}
    $ \mathbb{Q}\text{-a.s.,}$ for all $\lambda\geq0$ as $n\to\infty$.

We would like to have a convergence result as \eqref{eq:preassumptionL} but with $\tilde{\nu}[\mathrm{T}_n]$ instead of $\nu[\mathrm{T}_n]$.
For each $n\in\N$, let $(H^n_k)_{k\in\N}$ be an i.i.d.~sequence of random variables distributed according to $\nu[\mathrm{T}_n]$. For all $n\in\N$ we set $S^n_0:=0$ and
 \[S^n_t:=\sum_{i=1}^{\lfloor t \rfloor} H^n_i \qquad t \geq0 .\]
 Observe that, for all $n\in\N$, $S^n$ is distributed as the inverse local time of a random walk on $\mathrm{T}_n$. 

Let $G$ be a geometric random variable of parameter (probability of success) $1/3$ independent of the $H^n_i,i\in\N$.
Then
 \[\hat{\tilde{\nu}}[\mathrm{T}_n](\lambda)=\E\left[\exp\left(-\lambda\left(1+\sum_{i=0}^{G}H^n_i\right)\right)\right]\]
\[=\exp(-\lambda)\frac{2}{3-\hat{\nu}[\mathrm{T}_n](\lambda)}.\]
 Therefore
 \[1-\hat{\tilde{\nu}}[\mathrm{T}_n](\lambda)\sim \frac{1-\hat{\nu}[\mathrm{T}_n](\lambda)}{2}\]
as $\lambda\to0$. This, together with display \eqref{eq:preassumptionL} imply that
\[n^{1/2}(1-\hat{\tilde{\nu}}[\mathrm{T}_n](n^{-3/2}\lambda))\to f^{w}(\lambda),\qquad \mathbb{Q}\text{-a.s.,}\]
for all $\lambda\geq0$.
\end{proof}
\begin{proof}[Proof of Proposition \ref{convergenceofWIICast}]
Follows directly from Lemma \ref{lem:assumptionhtforiic}, Corollary \ref{cor:assumptionL} and Theorem \ref{prop:iidrtrw}. 
\end{proof}
\subsection{Proof of Theorems \ref{thm:IIC} and \ref{t:descriptionofziic}}\label{sect_zzz}
Theorems \ref{thm:IIC} and \ref{t:descriptionofziic} are essentially equivalent to Proposition \ref{convergenceofWIICast} but with processes restricted to the positive axis.
 Let \[\sigma^\epsilon_t:=\int_0^t 1_{(0,\infty)}(\epsilon X^{\IIC\ast}_{\pi^{-1}\epsilon^{-3}s}>0)ds\]
and 
\[\sigma:=\int_0^t1_{(0,\infty)}(B^{\mathbb{F}_1^\ast,1/2}_s>0) ds.\]
Let $\tau^\epsilon_t:=\inf\{s\geq 0:  \sigma^\epsilon_s>t\}$ and $\tau_t:=\inf\{s\geq 0:\sigma_s>t\}$. It is clear that $(X^{\IIC\ast}_{\tau^{\epsilon}_t})_{t\geq0}$ is distributed as $(X^{\IIC}_{\epsilon^{-1}t})_{t\geq0}$. On the other hand, using Proposition \ref{convergenceofWIICast} it can be shown that $\sigma^\epsilon\to\sigma$ uniformly over compact sets, 
    in particular, they converge in the weaker $M_1$ topology.
Therefore, by \cite[Theorem 13.2.3]{whi02} 
we deduce that 
\begin{equation}\label{eq:dispabove1}
(\epsilon X^{\IIC}_{\pi^{-1}\epsilon^{-3}t})_{t\geq0}\to(B^{\mathbb{F}_1,{\scriptscriptstyle\frac{1}{2}}}_{\tau_t})_{t\geq0}
\end{equation}
in distribution in the $M_1$ topology. 
By \eqref{eq:dispabove1}, we see that in order to prove the theorem, we need to show that $(B^{\mathbb{F}_1,{\scriptscriptstyle\frac{1}{2}}}_{\tau_{\pi t}})_{t\geq0}$ is distributed as $(Z^{\IIC}_t)_{t\geq0}$. 

It is easy to see that  $(B^{\mathbb{F}_1,{\scriptscriptstyle\frac{1}{2}}}_{\tau_t})_{t\geq0}$ can be constructed exactly as  $(B^{\mathbb{F}_1,{\scriptscriptstyle\frac{1}{2}}}_{t})_{t\geq0}$ but where the underlying process is a reflected Brownian motion instead of a Brownian motion. More precisely, let $(B^+_t)_{t\geq0}$ be a reflected Brownian motion and $l^+(x,t)$ its local time. Let $(x_i,y_i)_{i\in\N}$ be a Poisson point process of intensity $1/2 y^{-3/2} dydx$ independent of $B^+$ and let $(S^i)_{i\in\N}$ be an i.i.d.~sequence of subordinators (and independent of $B^+,(x_i,y_i)_{i\in\N}$) whose Laplace exponents are distributed according to $\mathbb{F}_1$. Let 
\[\phi^+_t:=\sum_{i\in\N} y_i^{3/2} S^i(y_i^{-1/2}l^+(x_i,t))\]
and
\[\psi^+_t:=\inf\{s\geq0:\phi^+_s>t\}.\]
Then we have that $(B^{\mathbb{F}_1,{\scriptscriptstyle\frac{1}{2}}}_{\tau_t})_{t\geq0}$ is distributed as $B^+_{\psi^+_t}$ and $(B^{\mathbb{F}_1,{\scriptscriptstyle\frac{1}{2}}}_{\tau_{\pi t}})_{t\geq0}$ is distributed as $B^+_{\psi^+_{\pi t}}$. It is easy to see that 
\begin{equation}\label{eq:dispabove2}
\begin{aligned}
&\psi^+_{\pi t}=\pi^{-1}\inf\{s\geq0: \phi_s^+>s\}=\pi^{-1}\sum_{i\in\N} y_i^{3/2}S^i(y_i^{-1/2}l^+(x_i,t))\\
&=\sum_{i\in\N} (\pi^{-1}y_i)^{3/2} \pi^{1/2} S^i(\pi^{-1/2}(\pi^{-1}y_i)^{-1/2}l(x_i,t)).
\end{aligned}  
\end{equation}
Since the subordinators $S^i$ are chosen according to $\mathbb{F}_1$, it follows that the subordinators $\pi^{1/2} S^i(\pi^{-1/2}\cdot)$ are chosen according to $\mathbb{F}_1^\ast$. Moreover, since 
$(x_i,y_i)_{i\in\N}$ has intensity $1/2y^{-3/2}dy dx$, we have that $(x_i,\pi^{-1}y_i)_{i\in\N}$ has intensity $1/2\pi^{-1/2}y^{-3/2}dydx$. Therefore, by \eqref{eq:dispabove2}, $(\psi^+_{\pi t})_{t\geq0 }$ is distributed as $(\psi_t^{\IIC})_{t\geq0}$
and, consequently 
\[(\epsilon X^{\IIC}_{\epsilon^{-3}t})_{t\geq0}\to(Z^{\IIC}_t)_{t\geq0}\]
in distribution in the $M_1$ topology.

It only remains to strengthen the convergence to the uniform topology. It is a known fact that convergence in the $M_1$ topology coincides with convergence in the uniform topology when the limiting function is continuous. On the other hand it can be shown that $\psi^{\IIC}$ is continuous and therefore, $(B^+_{\psi^{\IIC}_t})_{t\geq0}$ is also continuous. This, however, does not immediately implies that $(\epsilon^{1/3} X^{\IIC}_{\epsilon^{-1}t})_{t\geq0}$ converges to $(B^+_{\psi^{\IIC}_t})_{t\geq0}$ in the uniform topology. We also have to check measurability of the pre-images $\{\omega\in\Omega:\epsilon^{1/3} X^{\IIC}_{\epsilon^{-1}t}\in A\}$ for all $A$ open in $D(\R_+)$ with the uniform topology.

Since the times of jumps of $X^{\IIC}$ are contained in $\N$, we have that the range $\epsilon^{1/3} X^{\IIC}_{\epsilon^{-1}t}(\Omega)$ is separable in $(D(\R_+),U)$, where $U$ denotes the uniform topology. Hence, for each $A$ open in the uniform topology, there exist countable many $U$-balls (balls in the uniform metric) $(A_i)_{i\in\N}$ such that
\[A\cap \epsilon^{1/3} X^{\IIC}_{\epsilon^{-1}t}(\Omega)=\bigcup_{i\in\N} (\epsilon^{1/3} X^{\IIC}_{\epsilon^{-1}t}(\Omega)\cap A_i).\] Then $\{\omega\in\Omega:\epsilon^{1/3} X^{\IIC}_{\epsilon^{-1}t}\in A\}=\bigcup_{i\in\N} \{\omega\in\Omega:\epsilon^{1/3} X^{\IIC}_{\epsilon^{-1}t}\in A_i\}$. But each set $\{\omega\in\Omega :\epsilon^{1/3} X^{\IIC}_{\epsilon^{-1}t}\in A_i\}$ is measurable because they are pre-images of $U$-balls and the $U$-balls can be written as countable intersections of finite-dimensional sets.

 Now we define probability measures $P_\epsilon, P$ in $(D(0,T),U)$. Let $A$ be a $U$-open set. Let $P_\epsilon[A]=\Pb[\epsilon^{1/3} X^{\IIC}_{\epsilon^{-1}t}\in A]$ and $P[A]=\Pb[Z^{\IIC}_t\in A]$ for each $A$.
 Let $C$ denotes the set of continuous functions. Let $A$ be a $U$-open, $A_U$ be its $U$ closure and $A_{M_1}$ be its $M_1$ closure. Then $\limsup_\epsilon P_\epsilon[A]\leq \limsup_\epsilon P_\epsilon[A_{M_1}]\leq P[A_{M_1}]=P[A_{M_1}\cap C]$, where the second inequality follows from the fact that $P_\epsilon\to P$ in the $M_1$ topology and the last equality follows from $P[C]=1$. But $(A_{M_1}\cap C)\subset A_U$ because, as we have said, the $M_1$ topology coincides with convergence in the uniform topology when the limiting function is continuous (here we are implicitly using the fact that the $M_1$ topology is metrizable, see \cite[Theorem 12.5.1]{whi02}). Therefore $\limsup_\epsilon P_\epsilon[A]\leq P[A_U]$ which implies that $P_\epsilon\to P$ in $(D(0,T),U)$. 

\section{Convergence of local times for the IIC}\label{sect_local_time}

In this section we prepare the proof of Assumption L for $X^\IIC$ and Assumption $\tilde{\text{L}}$ for $X^\IPC$. During the exposition, we will focus on $X^\IIC$, nevertheless, as we will see in Section \ref{s:proofoftheorem2}, the same results can be applied for $X^\IPC$.

Let $\B_1$ be a finite random tree having the distribution of one of the branches of the IIC. That is, $\B_1$ is the connected component of the root under critical percolation on a tree in which the root has degree $1$ and every other vertex has degree $3$. 
Note that, for the case of $X^\IIC$, $\pi^{d(\epsilon)}$ in Assumption L is the distribution of $\tilde{\sigma}[\B_1]$ conditioned on $\E[\tilde{\sigma}[\B_1]]=d(\epsilon)$, where $\tilde{\sigma}[\B_1]$ is as in \eqref{eq:sigmatilde}. On the other hand, as proved in \cite[Lemma 2.28]{Kesten}, for any rooted tree $I$, \[\E[\tilde{\sigma}[I]]=|I|.\] Therefore, for $X^{\IIC}$, $\pi^{d(\epsilon)}$ equals $\tilde{\nu}[\B^{d(\epsilon)}]$, where $\B^n$ denotes a random tree having the law of $\B_1$ conditioned on $|\B_1|=n$. 

 Assumption L states the convergence in distribution of the random Laplace exponent $\Psi_\epsilon(\tilde{\nu}[\B^{d(\epsilon)}])$.
 Next, we will show how $\Psi_\epsilon(\tilde{\nu}[\B^{d(\epsilon)}])$ is related to the inverse local time at the root of the simple random walk on $\B^{d(\epsilon)}$. Let $I$ be a rooted tree and $(Y[I]_k)_{k\in\N_0}$ be discrete-time, symmetric random walk on $I$ started at the root $\rho$. The local time at the root is
\be\label{eq:definitionofdiscretelocaltime}
l[I]_{t}:=\sum_{i=0}^{\lfloor t \rfloor} 1_{\{Y[I]_i=\rho\}},
\ee
and the inverse local time is
 \[l^{\scriptscriptstyle-1}[I]_t:=\min\{s\geq0:l[I]_s>t\}.\]

Note that $l^{\scriptscriptstyle-1}[I]_k$ is the sum of the duration of the $k$ first excursions of $Y[I]$ away from the root. This is a discrete time process and  by randomizing the times of jumps of $l^{\scriptscriptstyle-1}[I]$ (making waiting times exponential of parameter one instead of constant equal to one), we get a compound Poisson process of intensity $1$ whose jumps are distributed according to $\nu[I]$.
 On the other hand, is a standard fact that, for each $\nu\in M_1(\R_+)$, $\Psi_\epsilon(\nu)$ in \eqref{eq:psiepsilon} is the Laplace exponent of a compound Poisson process of rate $\epsilon^{-1}$ and size jump distribution $\nu(\cdot)$, scaled by a factor $q(\epsilon)$.
   Therefore, since the intensity of the compound Poisson process converges to infinity as $\epsilon\to0$, if the compound Poisson subordinator with Laplace exponent $\Psi_\epsilon(\nu[I])$ converges in distribution to some process, then the rescaled inverse local time $q(\epsilon)l^{\scriptscriptstyle-1}[I]_{\epsilon^{\scriptscriptstyle-1}k}$ converges to the same limit.  Therefore, Assumption L is, as we will see, equivalent to the convergence of the process $(q(\epsilon) l^{-1}[\B^{d(\epsilon)}]_{\epsilon^{-1}k})_{k\in\N}$.
  The main result of this section is Proposition \ref{prop:annealedlocal}, in which we prove that the rescaled local times
   \[\left(\epsilon l[\B^{d(\epsilon)}]_{q(\epsilon)^{\scriptscriptstyle-1}k}\right)_{k\in\N_0}\]
    converge, as $\epsilon\to0$, to the local time at the root of the Brownian motion in the Continuum Random Tree. This will imply that
   \[\left(q(\epsilon)l^{\scriptscriptstyle-1}[\B^{d(\epsilon)}]_{\epsilon^{\scriptscriptstyle-1}k}\right)_{k\in\N_0}\]
   converges to the inverse local time at the root of the Brownian motion on the CRT as $\epsilon\to0$. From that, it will follow that $\Psi_\epsilon(\nu[\B^{d(\epsilon)}])$ converges in distribution to the random Laplace exponent of the inverse local time at the root of the Brownian motion on the CRT, only multiplied by a constant factor. Finally, we will show that $\Psi_\epsilon(\tilde{\nu}[\B^{d(\epsilon)}])$ converges to the inverse local time at the root of the Brownian motion on the CRT. This will prove Assumption L for $X^\IIC$.

\subsection{Preliminaries}\label{sect_def_crt}
 This subsection is devoted to recall some known facts about discrete and continuous random trees and processes taking values on them. Those facts will be used to state and prove the main result of this section.
\subsubsection{Random trees}\label{s:crt}
We start by describing the \emph{search-depth} process which is a well-known way of representing trees through excursions.
Let $\mathrm{T}$ be an ordered, rooted tree having $n$ vertices.
  Let $\tilde{w}:\{1,2,\dots,2n-1\}\to \mathrm{T}$ be defined as follows. Set
$\tilde{w}(1)=\textrm{root of }\mathrm{T}$. Given $\tilde{w}(i)$, set $\tilde{w}(i+1)$ as the first (in the order of $\mathrm{T}$) descendant of $\tilde{w}(i)$ which is not on $\{\tilde{w}(k):k=1,\dots,i\}$. If all the descendants of $\tilde{w}(i)$ are in $\{\tilde{w}(k):i=1,\dots,i\}$, then set $\tilde{w}(i+1)$ as the progenitor of $\tilde{w}(i)$. The function $\tilde{w}$ is called the \textit{depth-first search around $\mathrm{T}$}.
In other words, suppose $\mathrm{T}$ is embedded in the plane in such a way that children are ``above" their progenitor and siblings are ordered from left to right according to their order on $\mathrm{T}$. Then $\tilde{w}$ moves along the vertices of $\mathrm{T}$ ``clockwise" (according to the embedding in the plane), starting from the root and ending on the root.

  Define the \textit{search-depth process} $\omega:[0,1]\to [0,\infty)$ by
\begin{equation}\label{d:search-depth}
\omega(i/2n):=d_{\mathrm{T}}(\textrm{root},\tilde{\omega}(i)),\hspace{1cm}1\leq i\leq2n-1
\end{equation}
where $d_{\mathrm{T}}$ is the graph distance on $\mathrm{T}$.
We also set $\omega(0)=\omega(1)=0$ and extend $\omega$ to the whole interval $[0,1]$ by linear interpolation.

It is not hard to see that one can reconstruct a tree from its search depth process. This idea has been exploited by Aldous in \cite{AldousCRT3} to construct ``continuous trees" starting from continuous excursion. We proceed to recall that procedure.
Let
\[\W:=\{w:[0,1]\to[0,\infty): w \textrm{ is continuous}; w(t)>0\textrm{ if and only if } t\in(0,1)\}\]
 be the space of (positive) excursions away from $0$ of duration $1$.
Given $w\in \W$, we define a pseudometric $d_w$ over $[0,1]$ by
\[
d_w(s,t):=w(s)+w(t)-2\inf\{w(r):r\in[s\wedge t, s\vee t]\}.
\]
Define the equivalence relation $\sim$ on $[0,1]$ by stating that $s\sim t$ if and only if $d_w(s,t)=0$. Then define the topological space $\T_w:=[0,1] \slash \sim$.

 We denote by $[r]$ the equivalence class of $r\in[0,1]$. We can endow $\T_w$ with a metric $d_{\T_w}([s],[t]):=d_w(s,t)$.
The space $\T_w$ is arc-connected and contains no subspace homeomorphic to the circle.
Moreover $d_{\T_w}$ is a \textit{shortest-path} metric, that is, $d_{\T_w}$ is additive along the non-self intersecting paths of $\T_w$. In other words, $\T_w$ is an $\R$\emph{-tree} (real tree).
The Lebesgue measure $\lambda$ on $[0,1]$ induces a probability measure $\mu_{\T_w}$ over $\T_w$ by
\[
\mu_{\T_w}(A):=\lambda(\{t\in[0,1]:[t]\in A\})
\]
for any Borelian $A\subset\T_w$.\\

Now, let $W=(W_t)_{t\in[0,1]}$ be a random process defined on a probability space $(\X,\mathcal{G},P)$ having the law of a normalized Brownian excursion. Clearly, $W$ can be viewed as a random object taking values in $\W$. Thus, starting from the Brownian excursion $W$, the previous procedure allows us to construct a random $\R$-tree denoted $\mathfrak{T}$ (or $\mathfrak{T}_W$ if we want to emphasize the role of the excursion), equipped with a shortest-path metric $d_{\mathfrak{T}}$ and a measure $\mu_{\mathfrak{T}}$.
\begin{definition}\label{definitioncrt}
 The triple $(\mathfrak{T},d_{\mathfrak{T}},\mu_{\mathfrak{T}})$ is the \emph{Continuum Random Tree} (CRT).
\end{definition}

Having defined the CRT, we turn our attention to the issue of convergence of rescaled discrete trees to the CRT.
 Aldous in \cite[Theorem 20]{AldousCRT3} showed that the convergence of a rescaled sequence of discrete, ordered, rooted trees to a continuum random tree (in a suitable topology) is equivalent to the convergence of their respective search-depth processes. Furthermore in \cite[Theorem 23]{AldousCRT3} it is shown that the critical Galton-Watson trees conditioned on having $n$ vertices scales to the CRT as $n\to \infty$ (up to an unimportant factor 2).

We finish our review of random trees with some definitions that will be used later.
Let $\K$ be an $\R$-tree and $A$ be a subset of $\K$. We will suppose that $\K$ has a distinguished point $\rho$ which we will regard as the root. We define the subspace $r(\K,A)$ as
\be
r(\K,A):=\bigcup_{x\in A}[[\rho,x]],
\ee
where $[[\rho,x]]$ denotes the unique non-self intersecting path between $\rho$ and $x$.
This subspace is clearly an $\R$-tree. Moreover, if $A$ is finite, $r(\K,A)$ is closed and is called the {\bf reduced sub-tree}.

Given a tree $\mathrm{T}$ (continuous or discrete), and a sub-tree of it $\mathrm{T}^\prime$, we can define the projection $\phi_{\mathrm{T},\mathrm{T}^{\prime}}$ of $\mathrm{T}$ onto $\mathrm{T}^{\prime}$ by simply stating that, for each $x\in \mathrm{T}$, $\phi_{\mathrm{T},\mathrm{T}^{\prime}}(x)$ is the point on $\mathrm{T}^\prime$ which is closest to $x$. The uniqueness of the projection follows easily from the tree structure of $\mathrm{T}$ and $\mathrm{T}^\prime$.

\subsubsection{Brownian Motion on the Continuum Random Tree}\label{s:BMCRT}
Next we will recall the definition of Brownian motion taking values in $\R$-trees, in particular, the Brownian motion in the Continuum Random Tree.

 Let $\K$ be a locally compact $\R$-tree equipped with a shortest-path metric $d_{\K}$ and a Radon measure $\nu$. We will assume that $\nu(A)>0$ for any non-empty open set $A\subset\K$.
  Fox all $x,y,z\in\K$ we define the branching point between $x,y$ and $z$ as the unique point $b^{\K}(x,y,z)$ that satisfies
\be
\{b^{\K}(x,y,z)\}:=[[x,y]]\cap[[x,z]]\cap[[y,z]],
\ee
where $[[x,y]]$ denotes the unique non-self intersecting path between $x$ and $y$.

 Let $((B^{\K,\nu}_t)_{t\geq0},(P^{\K,\nu}_z)_{z\in\K})$ be a reversible Markov process taking values in $\K$ with the following properties
\begin{enumerate}
\item Continuous sample paths.
\item Strong Feller property.
\item Reversible with respect to its invariant measure $\nu$.
\item For $x,y\in\K,x\neq y$ we have
\[P^{\K,\nu}_z(\sigma_x<\sigma_y)=\frac{d_{\K}(b^{\K}(z,x,y),y)}{d_{\K}(x,y)},\textrm{ for all } z\in\K \]
where $\sigma_x:=\inf\{t>0:B_t^{\K,\nu}=x\}$ is the hitting time of $x$.
\item For $x,y\in\K$, the mean occupation measure for the process started at $x$ and killed on hitting $y$ has a density w.r.t.~to $\nu$ given by
\[2d_{\K}(b^{\K}(z,x,y),y)\nu(dz)\textrm{ for all } z\in\K.\]
\end{enumerate}
In section 5.2 of \cite{AldousCRT2}, Aldous claims that one can prove that such a process must be unique (in law). This allows to make the following definition.
\begin{definition}\label{definitionofbmonadendrite}
The process $B^{\K,\nu}$ is the Brownian motion on $(\K,d_{\K},\nu)$.
\end{definition}
 The existence of a process satisfying the definitions above, in the case where $\K$ is a locally compact $\R$-tree was first given by Krebs in \cite{krebs1995brownian}. Also Proposition 2.2 in \cite{rwrt} uses results from Kigami \cite{kigami1995harmonic} to define the $B^\K$ in a more concise way.  \begin{definition}
We define the Brownian motion on the Continuum Random Tree as
$B^{\mathfrak{T}_W,\mu_{\mathfrak{T}_W}}$, where $W$ is distributed as a normalized Brownian excursion. For simplicity, for each realization $w$ of $W$, $B^{\mathfrak{T}_w,\mu_{\mathfrak{T}_w}}$ will sometimes be denoted $B^w$.
\end{definition}

Lemma 2.5 in \cite{rwrt} ensures that for $P$-a.e. $w\in\W$ there exist jointly-continuous local times $(L_w(x,t))_{x\in\mathfrak{T}_w,t\geq0}$ for $B^{\mathfrak{T}_w,\mu_{\mathfrak{T}_w}}$.
The inverse local time at $x\in \mathfrak{T}_w$ is defined as
\[L^{-1}_w(x,t):=\inf\{s:L_w(x,s)>t\}.\]
 We finish this review on the Brownian motion on the CRT by defining the \emph{annealed laws} of the local time (and its inverse) at the root of the Brownian motion on the Continuum random tree, $\mathbb{G},\mathbb{G}^*\in M_1(D[0,\infty)),$ as
 \be\label{eq:annealedlocaltime}
\mathbb{G}(A):=\int_{\W}P^{\mathfrak{T}_w,\mu_{\mathfrak{T}_w}}(L_w(\rho,\cdot)\in A)P(dw)
\ee
 and
\be\label{annealedlocaltime}
\mathbb{G}^*(A):=\int_{\W}P^{\mathfrak{T}_w,\mu_{\mathfrak{T}_w}}(L^{-1}_w(\rho,\cdot)\in A)P(dw)
\ee
for all $A$ Borelian of $D[0,\infty)$ with the Skorohod topology. In what follows, sometimes we will consider the law $\mathbb{G}(A)$ as defined in the uniform topology instead of the Skorohod topology.
 
 To finish this subsection, we state and prove a preliminary result concerning local times, which was used in the proof of  Lemma~\ref{assumptionL}.
\begin{lemma}\label{l:irrelevanceoftheconstant}
 Let $w$ be a fixed realization of the normalized Brownian excursion $W$. Then, for each $c>0$ $L_w(\rho,\cdot)$ (the local time at the root of the Brownian motion on $\mathfrak{T}_w$) is distributed as $L_{cw}(\rho,\cdot)$ (the local time at the root of the Brownian motion on $\mathfrak{T}_{cw}$)
\end{lemma}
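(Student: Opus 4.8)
The plan is to exploit the characterization of the Brownian motion on an $\R$-tree via the five properties listed in Definition \ref{definitionofbmonadendrite}, together with the fact that rescaling the excursion $w \mapsto cw$ induces a simple rescaling of the associated $\R$-tree. First I would observe that the map $[t]\mapsto[t]$ is a bijection between $\mathfrak{T}_w$ and $\mathfrak{T}_{cw}$ (the equivalence relation $\sim$ is unchanged by multiplying $w$ by a positive constant), and under this identification the metrics satisfy $d_{\mathfrak{T}_{cw}} = c\, d_{\mathfrak{T}_w}$, while the mass measures satisfy $\mu_{\mathfrak{T}_{cw}} = \mu_{\mathfrak{T}_w}$ (since both are the pushforward of Lebesgue measure on $[0,1]$, which does not see the vertical scaling). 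So $(\mathfrak{T}_{cw}, d_{\mathfrak{T}_{cw}}, \mu_{\mathfrak{T}_{cw}})$ is isometric, as a measured tree, to $(\mathfrak{T}_w, c\,d_{\mathfrak{T}_w}, \mu_{\mathfrak{T}_w})$.

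Next I would check how the Brownian motion transforms under the scaling $(\mathfrak{T}_w, d, \mu) \rightsquigarrow (\mathfrak{T}_w, c\,d, \mu)$. Properties (1), (2), (3) of Definition \ref{definitionofbmonadendrite} are manifestly unaffected by a deterministic time change (continuity of paths, strong Markov property, reversibility w.r.t.\ $\mu$). For property (4), the hitting-probability ratio $d(b(z,x,y),y)/d(x,y)$ is invariant under $d\mapsto c\,d$, so it is unchanged. For property (5), the occupation-density formula $2\,d(b(z,x,y),y)\,\mu(dz)$ gets multiplied by $c$ when $d\mapsto c\,d$; hence the correct process on $(\mathfrak{T}_w, c\,d,\mu)$ is obtained from $X^w$ by speeding up time by a factor $c$, i.e.\ $(X^w_{ct})_{t\ge 0}$, since running at $c$ times the speed divides occupation times by $c$ and thus multiplies occupation densities by $c$ — wait, that is backwards; one should instead \emph{slow down} time, taking $(X^w_{t/c})_{t\ge0}$, so that occupation times, and hence densities, are multiplied by $c$. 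In any case, by Aldous' uniqueness (Section 5.2 of \cite{AldousCRT2}), $X^{cw} \overset{d}{=} (X^w_{t/c})_{t\ge 0}$ under the tree identification above.

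Finally I would translate this to local times at the root. By the occupation-density (co-area) relation, the local time of $X^{cw}$ at $\rho$ satisfies $L_{cw}(\rho, t) \overset{d}{=} L_w(\rho, t/c)$ as processes in $t$ — no, one must be careful here: a deterministic time change $t\mapsto t/c$ in the path produces $L_w(\rho,t/c)$, but the normalization of local time in Lemma 2.5 of \cite{rwrt} is tied to the occupation-density formula, which carries the extra factor $c$ from property (5). Reconciling these two effects, the factor $c$ from the time change and the factor $c$ from the measure-density normalization cancel, and one gets $L_{cw}(\rho,\cdot) \overset{d}{=} L_w(\rho,\cdot)$. I would make this precise by writing the defining occupation identity $\int_0^t g(X^{cw}_s)\,ds = \int_{\mathfrak{T}_{cw}} g(x) L_{cw}(x,t)\,\mu_{cw}(dx)$, substituting $X^{cw}_s = X^w_{s/c}$, changing variables $s = cu$, and comparing with the analogous identity for $X^w$; since $\mu_{cw}=\mu_w$, this forces $L_{cw}(x,t) = c\,L_w(x,t/c)$, and at the root the claim is the statement that $c\,L_w(\rho,t/c)$ has the same law \emph{as a process} as $L_w(\rho,t)$ — which is exactly the Brownian scaling of the inverse-local-time subordinator, or can be seen directly from the fact that $L_w(\rho,\cdot)$ inherits a self-similarity from the $1/2$-stable nature of the excursion local time. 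The main obstacle I anticipate is bookkeeping the two competing scalings (path time change versus measure normalization) without sign or exponent errors; once the occupation-density identity is written out explicitly the cancellation is forced and the lemma follows.
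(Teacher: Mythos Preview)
Your argument diverges from the paper's at the crucial point and then runs into a genuine gap. The paper's proof is much shorter: it observes that $\mathfrak{T}_w=\mathfrak{T}_{cw}$ as sets (same equivalence relation), and then asserts that $X^{\mathfrak{T}_w,\mu_{\mathfrak{T}_w}}$ is \emph{itself} the Brownian motion on $(\mathfrak{T}_{cw},d_{\mathfrak{T}_{cw}},\mu_{\mathfrak{T}_{cw}})$ in the sense of Definition~\ref{definitionofbmonadendrite}, with no time change at all; the equality of local times is then immediate. You instead carry out the scaling analysis and (correctly) find that property~(5) picks up a factor $c$, leading you to the time-changed identification $X^{cw}_t \overset{d}{=} X^w_{t/c}$ and hence $L_{cw}(\rho,t)=c\,L_w(\rho,t/c)$.

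The problem is your final step. You claim that $t\mapsto c\,L_w(\rho,t/c)$ has the same law as $t\mapsto L_w(\rho,t)$, appealing to ``Brownian scaling of the inverse-local-time subordinator'' or a ``$1/2$-stable'' self-similarity. But $w$ here is a \emph{fixed} realization of the excursion: the inverse local time $L^{-1}_w(\rho,\cdot)$ is a subordinator with a $w$-dependent Laplace exponent $f^w$, and the identity $c\,L^{-1}_w(\rho,s/c)\overset{d}{=}L^{-1}_w(\rho,s)$ you need is equivalent to $f^w(c\lambda)=c\,f^w(\lambda)$ for all $\lambda>0$, i.e.\ $f^w$ linear and $L^{-1}_w$ a pure drift. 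That is certainly false for the Brownian motion on a nontrivial tree. There is no quenched self-similarity to invoke; the $1/2$-stable behaviour you have in mind lives at the annealed level (or for local time of linear Brownian motion), not for fixed $w$. So the two factors you are tracking do \emph{not} cancel in law, and the argument as written does not close. The paper's route sidesteps this entirely by never introducing the time change in the first place.
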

\begin{proof}
 Let $B^{\mathfrak{T}_{w},\mu_{\mathfrak{T}_w}}$ be the Brownian motion on $(\mathfrak{T}_{w},d_{\mathfrak{T}_w},\mu_{\mathfrak{T}_w})$. We recall that $\mathfrak{T}_{w}$ is defined as $[0,1]/\sim$ where $x\sim y$ iff $d_{w}(x,y)=0$. Therefore $\mathfrak{T}_{w}=\mathfrak{T}_{cw}$ as sets. Hence $B^{\mathfrak{T}_{w},\mu_{\mathfrak{T}_w}}$ can be regarded as a process on $\mathfrak{T}_{cw}$. It can be checked that $B^{\mathfrak{T}_{w},\mu_{\mathfrak{T}_w}}$ is the Brownian motion on $(\mathfrak{T}_{cw},d_{\mathfrak{T}_{cw}},\mu_{\mathfrak{T}_{cw}})$ according to Definition \ref{definitionofbmonadendrite}. The result follows.
\end{proof}

\subsection{Statement and proof of the convergence of local times}\label{sect_local_time1}
Recall that $\B^n$ denotes the law of a branch of the IIC conditioned on having $n$ vertices.
In this subsection we will prove that the rescaled local time of the random walk on $\mathcal{B}^n$ converge to the local time of the Brownian motion in the continuum random tree. During the argument we will make use of many ideas of \cite{rwrt}.

Let us call $(\Omega,\mathcal{F},P)$ the probability space over which the conditioned branches $\mathcal{B}^n$ are defined. Recall that, for any realization $\mathcal{B}^n(\omega)$ of the random trees, $(l[\mathcal{B}^n(\omega)]_t)_{t\geq0}$ denotes the local time process of a simple random walk on $\mathcal{B}^n(\omega)$. For each $\omega\in\Omega$, let $P_n^\omega$ denote the law of the process $l[\mathcal{B}^n(\omega)]$.
Let us defined the \emph{annealed law} of the local time as \[\mathbb{P}_n(A):=\int_{\Omega}P_n^\omega[ A ] \mathbb{P}(d\omega),\]
for all $A$ Borelian of $D[0,\infty)$ under the topology of uniform convergence over compact subsets of $[0,\infty)$. The main result of this subsection is:
\begin{proposition}\label{prop:annealedlocal}
 Let $(L(\rho,t))_{t\geq0}$ be a process having the annealed law $\mathbb{Q}$ of the local time of the BM on the CRT (see \eqref{eq:annealedlocaltime}). Then
\[(n^{-1/2}l[\B^n]_{tn^{3/2}})_{t\geq0}\stackrel{n\to\infty}{\to}\left(\frac{1}{2}L(\rho,t)\right)_{t\geq0},\]
in $\mathbb{P}_n$-distribution in $D[0,\infty)$ endowed with the topology of uniform convergence over compact subsets of $[0,\infty)$.
\end{proposition}
The proof of the proposition above is obtained trough a coupling between the random trees $\B_n$ and the CRT. We now pass to describe the coupling.


Let $w^n$ denote respectively the search-depth processes of $\B^n$. As in the argument leading to \eqref{eq:convergenceofsearchdepthprocesses} above, we can use \cite[Theorem 23]{AldousCRT3} to deduce that
 \begin{equation}\label{eq:preassumption1}
  \left(n^{-1/2}w^n(t)\right)_{t\in[0,1]}\stackrel{d}{\Rightarrow}\left(\sqrt{2}W_t\right)_{t\geq0}\quad \text{as } n\to\infty
 \end{equation}
 on $C[0,1]$ endowed with the uniform topology, where $(W_t)_{t\in[0,1]}$ is the normalized Brownian excursion. By virtue  of the Skorohod representation theorem, we can find coupled processes $\bar{w}^n,n\in\N$  and $\bar{W}$ defined on a common probability space $(\bar{\Omega},\bar{\mathcal{F}},\mathbb{Q})$ such that $\bar{w}^n$ is distributed as $w^n$, $\bar{W}$ is distributed as $W$ and
 \begin{equation}\label{eq:convergenceofcoupledsearchdepthprocesses}
 \left(n^{-1/2}\bar{w}^n_t\right)_{t\in[0,1]} \stackrel{u}{\to} \left(\sqrt{2}\bar{W}_t\right)_{t\in[0,1]},\quad \mathbb{Q}\text{-a.s.,}
 \end{equation}
 where $\stackrel{u}{\to}$ denotes uniform convergence.
Note that, since an ordered tree can be reconstructed from its search depth process, we can consider the random trees $(\bar{\B}^n)_{n\in\N}$ corresponding to the search-depths $\bar{w}_n$. The sequence $(\bar{\B}^n)_{n\in\N}$ has the same distribution as that of the original sequence $(\B^n)_{n\in\N}$.

Let $\mathrm{T}_n$ be a fixed realization of $\bar{B}^n$. Recall from \eqref{eq:definitionofdiscretelocaltime} that $l[\mathrm{T}_n]$ denotes the local time process at the root of a random walk on $\mathrm{T}_n$. Recall also that, although $\mathrm{T}_n$ is a deterministic tree, $l[\mathrm{T}_n]$ is a random process. The next proposition  is at the heart of the proof of Proposition \ref{prop:annealedlocal} states the \emph{quenched} convergence of the local times.

 
 \begin{proposition}\label{convergenceoflocaltimes}
 For $\mathbb{Q}$-a.e. realization $(\mathrm{T}_n)_{n\in\N}$ of the random trees $(\bar{\B}^n)_{n\in\N}$ and $\mathbb{Q}$-a.e. realization $w$ of the Brownian excursion $(\bar{W}_t)_{t\in[0,1]}$, it holds that 
 \[(n^{-1/2}l[\mathrm{T}_n]_{tn^{3/2}})_{t\geq0}\stackrel{n\to\infty}{\to}\left(\frac{1}{2}L_{\sqrt{2}w}(\rho,t)\right)_{t\geq0}\] in distribution in $D[0,\infty)$ endowed with the topology of uniform convergence over compact subsets of $[0,\infty)$.
\end{proposition}

As a direct corollary of Proposition \ref{convergenceoflocaltimes} we have the convergence of the corresponding inverse local times. 

\begin{corollary}\label{quenchedconverenceofinverselocaltimes}
For $\mathbb{Q}$-a.e. realization $(\mathrm{T}_n)_{n\in\N}$ of the random trees $(\bar{\B}^n)_{n\in\N}$ and $\mathbb{Q}$-a.e. realization $w$ of the Brownian excursion $(\bar{W}_t)_{t\in[0,1]}$, it holds that 
\[(n^{-3/2}l^{-1}[\mathrm{T}_n](n^{1/2}t))_{t\geq0}\stackrel{n\to\infty}{\to}(L^{-1}_{\sqrt{2}w}(2t))_{t\geq 0}\]
in $D[0,\infty)$ endowed with the Skorohod $M_1$ topology. \end{corollary}
 For the definition of the Skorohod $M_1$ topology we refer to \cite[\S3.3]{whi02}.
 \begin{proof}[Proof of Corollary \ref{quenchedconverenceofinverselocaltimes}]
 By \cite[Lemma 13.6.3]{whi02} we know that the inversion map on $(D(\mathbb R_+), M_1)$ is continuous. Hence we can obtain convergence in the Skorohod $M_1$ topology for the corresponding inverted processes.
\end{proof}

Before going to the proof of Proposition \ref{convergenceoflocaltimes}, we will show that it implies Proposition \ref{prop:annealedlocal}.
\begin{proof}[Proof of Proposition \ref{prop:annealedlocal}]
By definition of the annealed law, and the coupling of Proposition \ref{convergenceoflocaltimes}, we have
\begin{equation}
\begin{aligned}
&\mathbb{P}_n\left[(n^{-1/2}l[\B^n]_{tn^{3/2}})_{t\geq0}\in A\right]:=\\
:=&\int_\Omega P_\omega[(n^{-1/2}l[\B^n(\omega)]_{tn^{3/2}})_{t\geq0}\in A]\mathbb{P}(d\omega)\\
=&\int_{\bar{\Omega}} P_\omega[(n^{-1/2}l[\B^n(\omega)]_{tn^{3/2}})_{t\geq0}\in A]\mathbb{Q}(d\omega).
\end{aligned}
\end{equation}
Using Proposition \ref{convergenceoflocaltimes} (and the bounded convergence theorem), we get that
\[\int_{\bar{\Omega}} P_\omega[(n^{-1/2}l[\B^n(\omega)]_{tn^{3/2}})_{t\geq0}\in A]\mathbb{Q}(d\omega)\to\int_{\bar{\Omega}} P_\omega\left[\left(\frac{1}{2}L_{\sqrt{2}w}(\rho,t)\right)_{t\geq0}\in A\right]\mathbb{Q}(d\omega)
\]
for every $A$, continuity set of $\mathbb{G}$. This shows the convergence in distribution towards $\frac{1}{2}L_{\sqrt{2}w}(\rho,\cdot)$. We can get rid of the $\sqrt{2}$ factor by Lemma \ref{l:irrelevanceoftheconstant}. This finishes the proof.
\end{proof}

The rest of the subsection deals with the proof of
 Proposition \ref{convergenceoflocaltimes}. We will need to consider subtrees of $\mathfrak{T}_w$. In order to span subtrees we use an i.i.d.~sequence of random variables $U=(U_i)_{i\in\N}$, uniformly distributed on $[0,1]$, which are defined in the same probability space $(\bar{\Omega},\bar{\mathcal F},\mathbb{Q})$ as the Brownian excursion $\bar{W}$ and are independent of it.



 Let $(w,(u_i)_{i\in\N}$) be a realization of the pair $(\bar{W},(U_i)_{i\in\N})$. We will span subtrees of $\mathfrak{T}_w$ using $(u_i)_{i\in\N}$.
For $k\in\N$, we define the reduced sub-tree $\mathfrak{T}_w(k)$ as
\begin{equation}\label{eq:defofreduced}
\mathfrak{T}_w(k):=r(\mathfrak{T}_w,\{[u_i]:i\leq k\}).
\end{equation}
 Since $\mathfrak{T}_w(k)$ is composed of a finite number of line segments, we can define the Lebesgue measure $\lambda^{(k)}_{w,u}$ over $\mathfrak{T}_w$. Moreover we will assume that $\lambda^{(k)}_{w,u}$ is normalized to become a probability measure.

Similarly, for $n$ fixed, we would like to use $(u_i)_{i\in\N}$ to span subtrees of a fixed realization $\mathrm{T}_n$ of the random trees $\bar{\B}_n$.
Let
\begin{equation}\label{eq:defofgamma}
\gamma_n(t):=\left\{\begin{array}{ll}

\lfloor2nt\rfloor/2n & \textrm{ if } w^n(\lfloor2nt\rfloor/2n)\geq w^n(\lceil2nt\rceil/2n),\\
\lceil 2nt\rceil/2n & \textrm{ otherwise, }
             \end{array}\right.
\end{equation}
where $w^n$ is the search-depth of $\mathrm{T}_n$.
This function is constructed so that, if $U$ is uniformly distributed over $[0,1]$, then $\tilde{w}_n(\gamma_n(U))$ is uniformly distributed over the vertices of $\mathrm{T}_n$, where $\tilde{w}_n$ is the depth-first search around $\mathrm{T}_n$. We define the discrete reduced sub-trees as
\[
\mathrm{T}_n(k):=r(\mathrm{T}_n,\{\tilde{w}_n(\gamma_n(u_i));i\leq k\}).
\]


\begin{remark}
In what follows, we are going to cite several results from \cite{rwrt} which work under the hypothesis that the pair $((w^n)_{n\in\N},(u_i)_{i\in\N})$ satisfy the so called \emph{Assumption 1}. More precisely, the setting in \cite{rwrt} is to consider, for each $n\in\N$, a sequence $(u^{(n)}_i)_{i\in\N}$ to span points from $\mathrm{T}_n$ (instead of a single sequence $(u_i)_{i\in\N}$ as we do).
Assumption 1 in \cite{rwrt} asks that, for each $n$, the sequence $(u_i^{(n)})_{i\in\N}$ is dense in $[0,1]$, and also
\begin{equation}\label{eq:ass1}
(n^{-1/2}w_n,u^{(n)})\to(w,u)
\end{equation}
in $C([0,1],\R_+)\times[0,1]^\N$, for some $(w,u)\in\Gamma$, where $[0,1]^\N$ is endowed with the product topology and $\Gamma$ is a specific subset of  $C([0,1],\R_+)\times[0,1]^\N$ which satisfies \[\mathbb{Q}[(\bar{W},(U_i)_{i\in\N})\in\Gamma]=1.\]
As we have said, in this article we choose the sequence $u^{(n)}$ independent of $n$, that is $u_i^{(n)}=u_i$, for all $n,i\in\N$. Since $(u_i)_{i\in\N}$ is $\mathbb{Q}$-a.s. dense in $[0,1]$, it is easy to deduce from \eqref{eq:preassumption1} that $((w^n)_{n\in\N},(u_i)_{i\in\N})$ satisfy Assumption 1, $\mathbb{Q}$-almost surely\footnote{Note that, strictly speaking, starting form \eqref{eq:preassumption1} we can deduce the $\mathbb{Q}$-a.s. occurrence of \eqref{eq:ass1}, but only with a $\sqrt{2}$ factor in front of $w$. Nevertheless, all the results that we are going to cite from \cite{rwrt} still hold when we add the $\sqrt{2}$ factor in the definition of Assumption 1.}. Hence when referring to the results of \cite{rwrt} that work under Assumption 1, we can (and will) instead state that they hold $\mathbb{Q}$-almost surely, without making direct reference to Assumption 1.
\end{remark}

 Define
\[
A^{(k)}_t:=\int_{\mathfrak{T}_w(k)}L_w(x,t)\lambda^{(k)}_{w,u}(dx),
\]
where we recall that $L_w$ is the local time of the Brownian motion $B^w$ on $\mathfrak{T}_w$.
Let also
\be\label{definitionoftau}
\tau^{(k)}(t):=\inf\{s:A_s^{(k)}>t\}.
\ee
and
\be\label{eq:definitionofBk}
B^{(k)}_t:=B^w_{\tau^{(k)}(t)}.
\ee
Lemma 2.6 in \cite{rwrt} ensures that the process $B^{(k)}$ is the Brownian motion on $(\mathfrak{T}_w(k),\lambda^{(k)}_{w,u})$ (according to Definition \ref{definitionofbmonadendrite}).
Moreover Lemma 3.3 in \cite{rwrt} implies that, $\mathbb{Q}$-almost surely, $B^{(k)}$ has jointly continuous local times $(L_w^{(k)}(x,t),t\geq0,x\in\mathfrak{T}_w(k))$.

Next we will state a lemma which will be used in the proof of Proposition \ref{convergenceoflocaltimes}.
Let $\Lambda_n^{(k)}:=n^{-1/2}\#\{\text{vertices of }\mathrm{T}_n(k)\}$.
\begin{lemma} \label{l:convergenceoflocaltimes:discretetocontinuous:reduced}
For $\mathbb{Q}$-a.e. realization $(\mathrm{T}_n)_{n\in\N}$ of the random trees $(\bar{\B}_n)_{n\in\N}$ and $\mathbb{Q}$-almost every realization $((w(t))_{t\in[0,1]},(u_i)_{i\in \N})$ of the pair $((\bar{W}_t)_{t\in[0,1]},(U_i)_{i\in\N})$, it holds that 
\[
(n^{-1/2}l[\mathrm{T}_n(k)]_{tn\Lambda_n^{(k)}})_{t\geq0}\stackrel{d}{\to}\left(\frac{1}{2}L^{(k)}_{\sqrt{2}w}(\rho,t)\right)_{t\geq0},
\]
with the topology of uniform convergence over compact intervals of time.
\end{lemma}

\begin{proof}[Proof of Lemma \ref{l:convergenceoflocaltimes:discretetocontinuous:reduced}]
The discrete tree $\mathrm{T}_n(k)$ can be regarded as an $\R$-tree by adding line segments between adjacent vertices. Specifically, since we are interested in rescalings of $\mathrm{T}_n(k)$, we will consider $\mathrm{T}_n(k)$ as an $\R$-tree with a shortest path metric $d_{\mathrm{T}_n(k)}$ by adding line segments of length $n^{-1/2}$ between each pair of adjacent vertices.

 Let $\lambda_{n,k}$ be the Lebesgue measure on $\mathrm{T}_n(k)$ with respect to the metric $d_{\mathrm{T}_n(k)}$, so that $\lambda_{n,k}(\mathrm{T}_n(k))=\Lambda_n^{(k)}$. Let $(B^{n,k}_t)_{t\geq 0}$ be the Brownian motion on $(\mathrm{T}_n(k),d_{\mathrm{T}_n(k)},\lambda_{n,k})$ and $(\bar{L}^{n,k}(x,t))_{x\in \mathrm{T}_n(k),t\geq0}$ be a jointly continuous version of its local time (whose $\mathbb{Q}$-a.s.~existence in guaranteed by Lemma 2.5 in \cite{rwrt}).

 Let $(v^k_i)_{i\leq l^k}$ be the set composed of the root, the leaves and branching points of $\mathfrak{T}_{\sqrt{2}w}(k)$. Also let $(e_i)_{i\leq l^{k}-1}$ denote the line segments of $\mathfrak{T}_{\sqrt{2}w}(k)$ which join the points $(v^k)_{i\leq l^k}$.
 Lemma 4.1 in \cite{rwrt} states that, $\mathbb{Q}$-almost surely, for each $k$ fixed, $\mathrm{T}_n(k)$, regarded as an $\R$-tree, converges to $\mathfrak{T}_{\sqrt{2}w}(k)$ as $n\to\infty$. Therefore, for $n$ large enough, $\mathrm{T}_n(k)$ is homeomorphic to $\mathfrak{T}_w(k)$.  Moreover, we can define the homeomorphism $\Upsilon_{n}^{(k)}:\mathfrak{T}_{\sqrt{2}w}(k)\mapsto \mathrm{T}_n(k)$ which preserves order and is linear along the line segments $(e_i)_{i\leq l^k-1}$.

Let us define the distance $\bar{d}_{n,k}$ on $\mathfrak{T}_w(k)$ by
\[\bar{d}_{n,k}(x,y)=d_{\mathrm{T}_n(k)}(\Upsilon_{n}^{(k)}(x),\Upsilon_{n}^{(k)}(y)).\]
Let $\bar{\lambda}_{n,k}$ be the Lebesgue measure of $\mathrm{T}_n(k)$ with respect to the distance $\bar{d}_{\mathrm{T}_n(k)}$.
Let $\Upsilon^{(k)\leftarrow}_{n}$ denote the inverse of $\Upsilon^{(k)}_n$. By verifying the properties in Definition \ref{definitionofbmonadendrite} it can be shown that $(\Upsilon^{(k)\leftarrow}_{n}(B^{n,k}_t))_{t\geq0}$ has the law of the Brownian motion in $(\mathfrak{T}_{\sqrt{2}w}(k),\bar{d}_{n,k},\bar{\lambda}_{n,k})$.

By the $\mathbb{Q}$-a.s.~convergence of $\mathrm{T}_n(k)$ towards $\mathfrak{T}_{\sqrt{2}w}(k)$ (guaranteed by Lemma 4.1 in \cite{rwrt}), one can choose a family of constants $(\delta_n)_{n\in\N}\subset(0,1]$, $\delta_n\to1$ as $n\to\infty$, that satisfies
\begin{equation}\label{eq:croydon2012}
\delta_n\bar{d}_{n,k}(x,y)\leq d_{\mathfrak{T}_{\sqrt{2}w}(k)}(x,y)\leq \delta_n^{-1}\bar{d}_{n,k}(x,y) \quad \forall x,y \in \mathfrak{T}_{\sqrt{2}w}(k).
\end{equation}
Proposition 3.1 in \cite{Croydon2012}, ensures that under \eqref{eq:croydon2012}, the corresponding local times converge. Therefore, since the local time at the root of 
$(\Upsilon^{-1}_{\mathfrak{T}_{\sqrt{2}w}(k),\mathrm{T}_n(k)}(B^{n,k}_t))_{t\geq0}$ coincides with that of $(B^{n,k}_t)_{t\geq0}$, we have that, almost surely,
\be\label{eq:proposition3.1ofcroydon}
(\bar{L}^{n,k}(\rho,\Lambda_{n}^{(k)} t))_{t\geq0}\stackrel{d}{\to}(L_{\sqrt{2}w}^{(k)}(\rho,t))_{t\geq0}
\ee
as $n\to\infty$ in $C[0,\infty)$ with the topology of uniform convergence on compact sets.

Consider the search depth process $h^n$ of $\mathrm{T}_n$ (see \eqref{d:search-depth}).
 Set $h^{n,k}(0):=0$ and
\[
h^{n,k}(m):=\inf\left\{t\geq h^n(m-1): B^{n,k}_t\in V(T_n(k))-\{B^{n,k}_{h^{n,k}(m-1)}\}\right\}.
\]
Define \[\bar{J}^{n,k}_m:=B^{n,k}_{h^{n,k}(m)}.\]
Observe that the process $\bar{J}^{n,k}$ is a simple random walk on the vertices of $\mathrm{T}_n(k)$.
From now on we will assume that the local time $l[\mathrm{T}_n(k)]$ is constructed with respect to $\bar{J}^{n,k}$.
Finally, the reasoning in the proof of Lemma 4.8 in \cite{rwrt} can be used to show that, $\mathbb{Q}$-almost every realization $((\mathrm{T}_n)_{n\in\N},(u_i)_{i\in\N})$ of the pair  $((\B_n)_{n\in\N},(U_i)_{i\in\N})$, 
\be\label{eq:lemma4.8ofcroydon}
\Pb\left[ \left.\sup_{t\leq M}\left|2n^{-1/2}l[\mathrm{T}_n(k)]_{tn}-\bar{L}^{n,k}(\rho,t)\right|>\varepsilon  \right\vert (\B_n)_{n\in\N}=(\mathrm{T}_n)_{n\in\N}, (U_i)_{i\in\N}=(u_i)_{i\in\N}\right]\to 0
\ee
as $n\to\infty$, for all $M\geq0$.

The lemma follows from displays \eqref{eq:proposition3.1ofcroydon} and $\eqref{eq:lemma4.8ofcroydon}$.
\end{proof}
\begin{proof}[Proof of Proposition \ref{convergenceoflocaltimes}]
The proof will rely on Lemma \ref{l:convergenceoflocaltimes:discretetocontinuous:reduced}. 

We will relate $l[\mathrm{T}_n(k)]$ and $l[\mathrm{T}_n]$ using the following coupling:
Let $X^n$ be a simple random walk on $\mathrm{T}_n$ started at the root. Define $A^{n,k}(0):=0$ and
 \[
 A^{n,k}(m):=\min\left\{j\geq A^{n,k}(m-1):X^{n}_j\in \mathrm{T}_n(k)-\{X^n_{A^{n,k}(m-1)}\}\right\}.
 \]
The process $(J^{n,k}_m)_{m\geq 0}$ defined as
\[
 J^{n,k}_m=X^{n}_{A^{n,k}(m)}
 \]
is a simple random walk on $\mathrm{T}_n(k)$.

During the proof we will assume that the local times $l[\mathrm{T}_n]$ and $l[\mathrm{T}_n(k)]$ are defined in terms of $X^n$ and $J^{n,k}$ respectively.
Since by assumption we have that $\deg_{\mathrm{T}_n}(\rho)=1$ for all $n\in\N$ (where $\deg_{\mathrm{T}_n}(x)$ denote the degree of $x$ in $\mathrm{T}_n$ ) , we have that each excursion away from the root of $X^n$ is also a excursion away from the root of $J^{n,k}$. Therefore
\be\label{fromreducedlocaltimestolocaltimes}
l[\mathrm{T}_n(k)]_{j}=l[\mathrm{T}_n]_{A^{n,k}(j)}.
\ee

On the other hand, Corollary 5.3 in \cite{rwrt} implies that for all $M\geq0$
\be\label{l:corollary5.3ofcroydon}
\lim_{k\to\infty}\limsup_{n\to\infty}\Pb\left[\sup_{t\leq M}\left|n^{-3/2}A^{n,k}(\Lambda_n^{(k)}nt)-t\right|>\varepsilon\right]=0.
\ee

By virtue of the Skorohod representation theorem, we can assume that the convergence in Lemma \ref{l:convergenceoflocaltimes:discretetocontinuous:reduced} is almost sure. In particular, we can assume that $L^{(k)}_{\sqrt{2}w}$ and $l[\mathrm{T}_n(k)]$ are defined in the same probability space and, for each $M\geq0$ they satisfy
\[
\lim_{n\to\infty}\Pb\left[\sup_{t\leq M}\left|n^{-1/2}l[\mathrm{T}_n(k)]_{tn\Lambda_n^{(k)}}-\frac{1}{2}L^{(k)}_{\sqrt{2}w}(\rho,t)\right|\geq\epsilon \right]= 0.
\]
Therefore, by \eqref{fromreducedlocaltimestolocaltimes}, and the uniform continuity of $L^{(k)}_{\sqrt{2}w}(\rho,t)$ in $[0,M]$
\be\label{**}
\lim_{n\to\infty}
\Pb\left[\sup_{t\leq M}\left|n^{-1/2}l[\mathrm{T}_n]_{A^{n,k}(tn\Lambda_n^{(k)})}-\frac{1}{2}L^{(k)}_{\sqrt{2}w}(\rho,t)\right|\geq\epsilon \right]=0.
\ee
Lemma 3.4 in \cite{rwrt} states that
\be\label{Lemma 3.4}
L^{(k)}_{\sqrt{2}w}(\rho,t)=L_{\sqrt{2}w}(\rho,\tau^{(k)}(t)),
\ee
where $\tau^{(k)}(t)$ is as in \eqref{definitionoftau}.
Moreover, by  \cite[Lemma 3.1]{rwrt}, for each $M\geq0$
\be\label{eq:probabilityofC}
\mathbb{P}\left[\sup_{t\leq M}\left|\tau^{(k)}(t)-t\right|>\varepsilon\right]\to0
\ee
as $k\to\infty$.
Therefore, using the uniform continuity of $L_{\sqrt{2}w}(\rho,t)$ in $[0,M]$ together with \eqref{Lemma 3.4} and \eqref{eq:probabilityofC} we get
\begin{equation}\label{***}
\Pb\left[\sup_{t\leq M}\left|L^{(k)}_{\sqrt{2}w}(\rho,t)-L_{\sqrt{2}w}(\rho,t) \right|\geq\epsilon \right]\to0
\end{equation}
as $k\to\infty$.
Finally, combining \eqref{l:corollary5.3ofcroydon}, \eqref{**} and \eqref{***} we get that
\[
\Pb\left[\sup_{t\leq M}\left|n^{-1/2}l[\mathrm{T}_n]_{tn^{3/2}}-\frac{1}{2}L_{\sqrt{2}w}(\rho,t)\right|\geq\epsilon \right]\to 0\]
as $n\to\infty$, which proves our claim.

\end{proof}

\section{Convergence result for the IPC: proof of Theorem~\ref{thm:IPC} and Theorem~\ref{t:descriptionofzipc}}\label{sect_ipc}
\subsection{Assumption $\widetilde{\text{HT}}$ for $Z^{\IPC}$}\label{sect_IPCHT}

We recall that $\Lv_k$ is the branch emerging from the $k$-th vertex of the backbone of the IPC and $\mu_{\IPC}$ is the random measure appearing in (\ref{ipclimitmeasure}). The main result of this section is
\begin{lemma}\label{annealedipc}
Let $V^{\IPC}_x:=\sum_{k=1}^{\lfloor x \rfloor} m(\tilde{\nu}[\Lv_k])$ and $I_x:=\mu_{\IPC}(0,x]$  . Then
\[
(\epsilon^2 V^{\IPC}_{\epsilon^{-1} x})_{x\geq0}\stackrel{\epsilon\to0}
    \to (I_x)_{x\geq0}
\]
in distribution on $(D[0,\infty),J_1)$.
\end{lemma}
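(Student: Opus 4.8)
The plan is to reduce the convergence of the rescaled sums $V^{\IPC}_x$ to the convergence of the underlying structure of the IPC branches, and to identify the limiting measure $\mu_{\IPC}$ through the inverse Gaussian subordinators. Recall from \cite[Lemma 2.28]{Kesten} that $m(\tilde{\nu}[\Lv_k]) = |\Lv_k|$, so $V^{\IPC}_x = \sum_{k=1}^{\lfloor x\rfloor}|\Lv_k|$ is simply a partial sum of branch sizes. The first step is therefore to recall the structural description of the IPC branches from \cite{ipc}: the branch $\Lv_k$ is (conditionally on the environment of weights) distributed as a subcritical percolation cluster on $T^\ast$ with parameter $p_k$, where $p_k \to 1/2$ as $k\to\infty$, and more precisely $p_k$ is governed by the forward maximal weight process along the backbone, which at level $\epsilon^{-1}x$ sits at height $1/2 + \Theta(\epsilon)$. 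The key input from the invasion percolation literature is that the rescaled deviation process $\epsilon^{-1}(p_{\lfloor \epsilon^{-1}x\rfloor} - 1/2)$, or equivalently the forward maximal weight process, converges to (a multiple of) the lower envelope process $E_x$ of a Poisson point process on the quadrant; this is exactly why $E$ enters the definition of $\mu_{\IPC}$.

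The second step is to control the contribution of a single "slightly subcritical" branch. Using Lemma \ref{laplacetransformofcardinality} with $p = p_k = 1/2 - \eta$, one computes $1 - \hat{N}_{p}(\lambda) = \sqrt{1 - 4p(1-p)e^{-\lambda}}$, and expanding near $\lambda = 0$ with $\eta$ small gives $1 - \hat{N}_p(\lambda) \sim \sqrt{2\lambda + 4\eta^2}$ (after absorbing constants), which is precisely the Laplace exponent (up to the correct normalization $\delta = 1/\sqrt{2}$) of an inverse Gaussian subordinator with drift parameter $\gamma$ proportional to $\eta$. Thus when we sum $|\Lv_k|$ over a block of backbone vertices where $p_k \approx 1/2 - \epsilon\cdot\text{const}\cdot E_{a_i}$ and rescale by $\epsilon^2$ in space $\epsilon^{-1}$, the block sum converges to an inverse Gaussian subordinator run over that block with the matching parameter $\gamma = \sqrt{2}E_{a_i}$ — this is the content of gluing together the $\mu^i_{\IPC}$. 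The heavy-tailed behavior $m(\tilde\nu[\Lv_k]) \sim \pi^{-1/2}u^{-1/2}$ at criticality (Lemma \ref{lem:assumptionhtforiic}) is the $\gamma = 0$ endpoint of this family, consistent with the $y^{-3/2}$ intensity appearing in the IIC case.

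The third step assembles these pieces: condition on the weight environment (equivalently on $E$), so that the branches $\Lv_k$ become conditionally independent with known near-critical parameters; apply a triangular-array / functional convergence theorem for sums of independent heavy-tailed contributions (e.g. the standard criterion for convergence to a pure-jump process via convergence of the truncated Laplace transforms established in step two, together with tightness in $(D,J_1)$), to get convergence of $(\epsilon^2 V^{\IPC}_{\epsilon^{-1}x})_x$ towards $(I_x)_x$ conditionally on $E$; and finally integrate out $E$ using the convergence of the discrete maximal-weight process to the envelope $E$, invoking a continuity/continuous-mapping argument to pass from the conditional statement to the annealed one. I expect the main obstacle to be the last point made rigorous: one must show the joint convergence of the maximal-weight process and the branch-size sums, handle the fact that $\mu_{\IPC}$ is neither independent across blocks nor i.i.d. (the parameter of block $i$ depends on $E_{a_i}$, which is determined by the whole past of the envelope), and verify $J_1$-tightness despite the dependence — this is where one has to be careful that the contributions of the many shallow branches do not accumulate and that the conditioning on a probability-zero structure (the envelope) is made sense of correctly, presumably by a limiting argument along discretizations as alluded to in the construction of $\mu_{\IPC}$.
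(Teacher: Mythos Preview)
Your proposal is correct and follows essentially the same route as the paper: reduce to branch sizes via $m(\tilde\nu[\Lv_k])=|\Lv_k|$, use the structural description of the IPC branches from \cite{ipc} together with the convergence of the rescaled forward-maximum process to the envelope $E$, expand the Laplace transform of $N_p$ from Lemma~\ref{laplacetransformofcardinality} near $p=1/2$ to recognize the inverse Gaussian exponent, and then run a quenched-then-annealed argument with $J_1$-tightness. The one point where the paper is more explicit than your sketch is the ``joint convergence'' obstacle you flag at the end: rather than proving joint convergence directly, the paper invokes Skorohod representation to couple the discrete environments $(\bar M^k)$ and the limiting envelope $\bar E$ so that the environment convergence holds almost surely, and then proves the conditional (quenched) functional convergence for fixed environment (their Lemma~\ref{quenchedipc}), handling the behavior near $x=0$ by a separate $\delta$-cutoff argument and establishing tightness via the first-moment criterion of \cite[Theorem~15.6]{Billingsley68}.
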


To prepare the proof of Lemma \ref{annealedipc}, we first need to provide some known facts about the IPC. Let $P_j$ be the weight of the $j$-th vertex of the backbone of the IPC $\I^{\infty}$ and $M_k:=\sup\{P_j:j>k\}$. By \cite [Proposition 2.1]{ipc} we have that, conditioned on a fixed realization of $(P_k)_{k\in\N}$, the sequence of branches $(\Lv_k)_{k\in\N}$ is an independent sequence of trees where each $\Lv_k$ is distributed as a supercritical percolation cluster on $\mathbb{T}_2^\ast$ with parameter $M_k$, conditioned to stay finite.
The percolation parameter $M_k$ corresponding to the cluster attached at $k\in \mathbb{N}$ decreases to $p_c=1/2$ as $k$ goes to $\infty$.
In fact, it can be shown (see \cite[Proposition 3.3]{ipc}) that for any $\epsilon>0$
\begin{equation} \label{environment}
(k[2M_{\left\lceil kt \right\rceil}-1 ])_{t>\epsilon} \stackrel{k\to\infty}{\to} (E_t)_{t>\epsilon}
\end{equation}
in distribution on $(D[\epsilon,\infty),J_1)$ where $E_t$ is the lower envelope of a homogeneous Poisson point process as in display (\ref{lt}).

One can use duality of percolation to see that a supercritical cluster with parameter $p$ conditioned to stay finite is distributed as a subcritical cluster
 with dual parameter $\tilde{p}$ which satisfies (see \cite[Lemma 2.2]{ipc})
\[
 p-p_c \sim p_c-\tilde{p} \textrm{  as  } p\downarrow p_c
\]
where $\sim$ denotes asymptotic equivalence.
Hence, using (\ref{environment}) we can show that, for each $\epsilon>0$
\begin{equation}\label{environmentII}
(k[1-2\tilde{M}_{\left\lceil kt \right\rceil} ])_{t>\epsilon} \stackrel{k\to\infty}{\to} (E_t)_{t>\epsilon}
\end{equation}
in distribution on $(D[\epsilon,\infty),J_1)$

Using display (\ref{environmentII}) and the Skorohod representation theorem we can find, for each $\epsilon>0$, copies of $(k[1-2\tilde{M}_{\left\lceil kt \right\rceil} ])_{t>\epsilon},k\in\N$ and $(E_t)_{t>\epsilon}$ in which the convergence in (\ref{environmentII}) holds almost surely. It will be more convenient to have copies which do not depend on $\epsilon$ and in which the almost sure convergence holds when restricted to $(\epsilon,\infty)$, for each $\epsilon>0$.
In order to do that we first prove this simple lemma
\begin{lemma}\label{scalingforthepercolationparameter}
$(k^{-1}(1-2\tilde{M}_{\lceil tk \rceil})^{-1})_{t\geq0}$ converge in distribution to $(E_t^{-1})_{t\geq0}$ in the Skorohod $J_1$ topology as $k\to\infty$.
\end{lemma}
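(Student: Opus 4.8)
The plan is to deduce the statement from the convergence \eqref{environmentII}, which gives, for each fixed $\epsilon>0$, that $(k[1-2\tilde{M}_{\lceil tk\rceil}])_{t>\epsilon}$ converges in distribution to $(E_t)_{t>\epsilon}$ in $(D[\epsilon,\infty),J_1)$. The only real content is to upgrade this family of "convergence on $[\epsilon,\infty)$" statements, one for each $\epsilon$, to a single statement of convergence on the half-line $[0,\infty)$ after inversion; the inversion itself is harmless because $E$ is strictly positive on $(0,\infty)$ and both $1-2\tilde M_{\lceil tk\rceil}$ and $E_t$ are positive, so the map $x\mapsto x^{-1}$ is continuous at every relevant point, and moreover it is monotone, which interacts well with $J_1$ (one can also simply compose with the continuous-on-$(D((0,\infty)),J_1)$ inversion map and then restrict). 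First I would record that $(E_t^{-1})_{t\geq 0}$ is a well-defined element of $D[0,\infty)$: indeed $E$ is decreasing, piecewise constant, strictly positive, and $E_t\to 0$ as $t\to\infty$ while $E_{0+}=\infty$ with $E_t<\infty$ for all $t>0$; hence $E^{-1}$ is increasing, piecewise constant, finite on $(0,\infty)$, right-continuous with left limits, and $E^{-1}_{0+}=0$, $E^{-1}_0:=0$. Similarly $k^{-1}(1-2\tilde M_{\lceil tk\rceil})^{-1}$ is increasing and finite for $t>0$, with value $0$ imposed at $t=0$ (this is the natural convention since $\tilde M_0$ is essentially $1$; if one prefers one extends by right-continuity from $t>0$).

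The main step is then a standard "convergence on $[0,\infty)$ from convergence on $[\epsilon,\infty)$ for all $\epsilon$" argument, exploiting that the limit process $E^{-1}$ is continuous and equal to $0$ at $0$. Concretely, fix $T<\infty$ and $\eta>0$. Choose $\epsilon>0$ small enough that $\Pb[E^{-1}_\epsilon > \eta]<\eta$; this is possible since $E^{-1}_\epsilon\downarrow 0$ as $\epsilon\downarrow 0$ a.s. By \eqref{environmentII} and the continuous mapping theorem applied to inversion on $(D[\epsilon,\infty),J_1)$ (continuity at the limit point follows because $E$ is bounded away from $0$ on $[\epsilon,\infty)$, so inversion is even Lipschitz there), the restricted processes $(k^{-1}(1-2\tilde M_{\lceil tk\rceil})^{-1})_{t\in[\epsilon,T]}$ converge in distribution in $(D[\epsilon,T],J_1)$ to $(E^{-1}_t)_{t\in[\epsilon,T]}$. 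On the complementary interval $[0,\epsilon]$, the prelimit process is increasing, hence bounded by its value at $\epsilon$, so $\sup_{t\leq\epsilon}k^{-1}(1-2\tilde M_{\lceil tk\rceil})^{-1} = k^{-1}(1-2\tilde M_{\lceil \epsilon k\rceil})^{-1}$, which converges in distribution to $E^{-1}_\epsilon$; by the choice of $\epsilon$ this is $O(\eta)$ with probability $1-O(\eta)$, uniformly in $k$ large. Since the limit $E^{-1}$ is continuous at $0$ with $E^{-1}_0=0$, these two facts combine via the usual characterisation of $J_1$-convergence on $[0,T]$ (matching on $[\epsilon,T]$ plus smallness near $0$) to give $J_1$-convergence on $[0,T]$; see \cite[\S3.3, \S12.9]{whi02} for the gluing of $J_1$-convergent pieces, or simply work with the modulus $w'$ and observe that the oscillation contributed by $[0,\epsilon]$ is at most $\eta$ with high probability. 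Letting $\eta\downarrow 0$ and then $T\uparrow\infty$ gives convergence in $(D[0,\infty),J_1)$.

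The step I expect to be the mildly delicate one is this gluing: passing from convergence of the restrictions to $[\epsilon,\infty)$ for every $\epsilon$ to convergence on $[0,\infty)$ is not automatic in general (a jump of the prelimit process could escape toward $t=0$), and it works here precisely because (i) the limit is continuous and vanishes at $0$, and (ii) the prelimit is monotone, so its entire oscillation on $[0,\epsilon]$ is controlled by the single increment $k^{-1}(1-2\tilde M_{\lceil\epsilon k\rceil})^{-1}$. Everything else — the existence and regularity of $E^{-1}$, the positivity needed for inversion to be continuous, and the reduction to a fixed compact time interval — is routine. I would also remark that this lemma is exactly the tool needed to construct, on one probability space, copies of $(k[1-2\tilde M_{\lceil tk\rceil}])_k$ and $E$ realising the a.s.\ convergence of \eqref{environmentII} simultaneously for all $\epsilon>0$, which is how it will be used in the proof of Lemma \ref{annealedipc}.
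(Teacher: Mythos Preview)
Your proposal is correct and follows essentially the same route as the paper: invoke \eqref{environmentII} together with the continuous mapping theorem for $x\mapsto x^{-1}$ to get $J_1$-convergence on each $[\epsilon,\infty)$, then handle the neighbourhood of $0$ by exploiting monotonicity of the prelimit process and the fact that its value at $\epsilon$ converges to $E_\epsilon^{-1}\to 0$. The paper phrases the gluing step as ``finite-dimensional convergence plus tightness'' (citing \cite[Theorem 15.6]{Billingsley68}) rather than your modulus/$w'$ formulation, but the argument is the same.
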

\begin{proof}
Using display (\ref{environmentII}), continuity of $x\mapsto x^{-1}$ on $(\epsilon,\infty)$ and the continuous mapping Theorem we obtain that, for each $\epsilon>0$, $(k^{-1}(1-2\tilde{M}_{\lceil tk\rceil})^{-1})_{t\geq\epsilon}$ converges  to $(E_t^{-1})_{t\geq\epsilon}$. From this we can deduce convergence of finite dimensional distributions and tightness away from $0$ (for tightness in the Skorohod $J_1$ topology, see e.g., Theorem 15.6 in \cite{Billingsley68}). To deal with the behavior near $0$ we use the fact that $k^{-1}(1-2\tilde{M}_{\lceil \epsilon k\rceil})^{-1}$ converges in distribution to $E_\epsilon^{-1}$ and $E_\epsilon^{-1}$ converges in distribution to $\delta_0$ as $\epsilon\to0$. Also, the processes involved are increasing and positive. This gives tightness near $0$ and convergence of marginals at $t=0$.
\end{proof}
Using the previous lemma and the Skorohod representation theorem we can find a family of processes $(\bar{M}^{k}_t)_{t\geq0}$, $k\in\N$, and a process $(\bar{E}_t)_{t\geq 0}$ defined on a common probability space $(\mathcal{X},\mathcal{G},\mathbb{Q})$ such that
\begin{enumerate}
\item $(k^{-1}(1-2\bar{M}^{k}_{\lceil tk \rceil})^{-1})_{t\geq0}$  converges almost surely to $(\bar{E}_t^{-1})_{t\geq0}$ in the Skorohod $J_1$ topology as $k\to\infty$.
\item for each $k\in\N$, $(\bar{M}^k_t)_{t\geq0}$ is distributed as $(\tilde{M}_t)_{t\geq0}$
\item $(\bar{E}_t)_{t>0}$ is distributed as $(E_t)_{t>0}$.
\end{enumerate}
Note that item 1 above implies that, for each $\epsilon>0$, $(k(1-2\bar{M}^{k}_{\lceil tk \rceil}))_{t\geq\epsilon}$  converges almost surely to $(\bar{E}_t)_{t\geq\epsilon}$ in the Skorohod $J_1$ topology as $k\to\infty$.

Let $(\bar{b}_i)_{i\in\N}$ be a enumeration of the points of discontinuity of $\bar{E}$ and $\bar{a}_i:=\max\{\bar{b}_j:\bar{b}_j<\bar{b}_i\}$. By the matching of jumps property of the $J_1$ topology, for each $i\in\N$, there exists a sequence $(a^i_k)_{k\in\N}$ such that $a^i_k\to \bar{a}_i$, $k(1-2\bar{M}^{k}_{ ka_k^i })\to\bar{E}_{\bar{a}_i}$ and $k(1-2\bar{M}^{k}_{ ka_k^i- })\to\bar{E}_{\bar{a}_i-}$ as $k\to\infty$. Hence
\be\label{asymptoticbehaviorofpercolationparameter}
\sup_{l\in[k a_k^i, kb_k^i)}\left\vert\bar{M}^{k}_l-\frac{1-k^{-1}\bar{E}_{\bar{a}_i}}{2}\right\vert=o(k^{-1}).
\ee

Let us fix a realization of the processes $(\bar{M}^{k}_t)_{t\geq0}$, $k\in\N$ and $(\bar{E}_t)_{t\geq 0}$. Let $((\bar{I}^i_t)_{t\geq0})_{i\in\N}$ be an independent family of inverse Gaussian subordinators, each one with parameters $\delta=2^{-1/2}$ and $\gamma=\sqrt{2}\bar{E}_{a_i}$. Let $\bar{I}_x:=\sum_{i:\bar{a_i}<x}\bar{I}^i_{\bar{b}_i\wedge x}-\bar{I}^i_{\bar{a}_i}$. Recall that for $p\leq1/2$, $N_p$ denote the size of a percolation tree of parameter $p$. We define
$\bar{V}^{(k)}_x:=\sum_{i=1}^{\lfloor x \rfloor} m^{(k)}_i$, where $(m^{(k)}_i)_{i\in\N}$ is a random variable distributed as $N_{\bar{M}^k_i}$.
Hence, for all $k\in\N$, $(\bar{V}^{(k)}_x)_{x\geq0}$ is distributed as $(V^{\IPC}_x)_{x\geq 0}$ and $(\bar{I}_t)_{t\geq0}$ is distributed as $(I_t)_{t\geq0}$.
Lemma \ref{annealedipc} follows from
\begin{lemma}\label{quenchedipc} We have that, $\mathbb{Q}$-almost surely, 
\[
( k^{-2}\bar{V}^{(k)}_{k x})_{x\geq0}\stackrel{\epsilon\to 0}
    \to (\bar{I}_x)_{x\geq0}
\]
in distribution with the Skorohod $J_1$ topology in $D(\R_+)$.
\end{lemma}
\begin{proof}
We first will prove convergence of marginals and in order to do it we compute Laplace transforms. Let $\delta>0$ be fixed. We can write
\[\E\left[\exp(-\lambda k^{-2}(\bar{V}^{(k)}_{kx}-\bar{V}^{(k)}_{k\delta}))\vert (\bar{M}^k_t)_{t\geq0}\right]
=\prod_{l=\lfloor\delta k\rfloor }^{\lfloor kx \rfloor}\hat{N}_{\bar{M}_l^k}(k^{-2}\lambda)\]
\[=\prod_{\{i:a^k_i\leq x,b^k_i>\delta \}}\prod_{l=\lfloor k(a_i^k\vee \delta)\rfloor}^{\lfloor k (b_i^k\wedge x)\rfloor}\hat{N}_{\bar{M}_l^k}(k^{-2}\lambda).
\]
  By virtue of display (\ref{asymptoticbehaviorofpercolationparameter}), Lemma \ref{laplacetransformofcardinality} and some standard computations we have that
\be\label{convergencetothelaplaceexponentofaninversegaussiansubordinator}
\hat{N}_{\bar{M}_l^k}(k^{-2}\lambda)-1= k^{-1}\left(\bar{E}_{\bar{a}_i}-\sqrt{\bar{E}_{\bar{a}_i}^2+\lambda}\right)+o(k^{-1})
\ee
for all $l\in[k a_k^i, kb_k^i)$ and where the error term is uniform over $l$.
From this it follows that
\[
\lim_{k\to\infty}\prod_{l=\lfloor k(a_i^k\vee \delta)\rfloor}^{\lfloor k (b_i^k\wedge  x)\rfloor}\hat{N}_{\bar{M}_l^k}(k^{-2}\lambda)=\exp\left(((\bar{b}_i\wedge x) -(\bar{a}_i\vee \delta))\left(\bar{E}_{\bar{a}_i}-\sqrt{\bar{E}_{\bar{a}_i}^2+\lambda}\right)\right).
\]
Hence
\[\lim_{k\to\infty}\E[\exp(-\lambda k^{-2}(\bar{V}^{(k)}_{kx}-\bar{V}^{(k)}_{k\delta}))\vert (\bar{M}^k_t)_{t\geq0}]\]
\be\label{marginalsawayfromzero}
=\prod_{\{i:\bar{a}_i\leq x,\bar{b}_i>\delta \}}\exp\left[((\bar{b}_i\wedge x) -(\bar{a}_i\vee \delta))\left(\bar{E}_{\bar{a}_i}-\sqrt{\bar{E}_{\bar{a}_i}^2+\lambda}\right)\right].
\ee
On the other side
\[
\limsup_{\delta\to0}\limsup_{k\to\infty}(1-\E[\exp(-\lambda k^{-2}\bar{V}^{(k)}_{k\delta})\vert (\bar{M}^k_t)_{t\geq0}])
\]
\begin{equation}\label{eq:dispabove3}
\leq\limsup_{\delta\to0}\limsup_{k\to\infty}1-\hat{N}_{\bar{M}^k_{\lfloor\delta k\rfloor}}(k^{-2}\lambda)^{\lfloor \delta k\rfloor}
\end{equation}
because, for any $p_1\geq p_2$ we have that $N_{p_1}$ stochastically dominates $N_{p_2}$ and $\bar{M}^k_t$ is non-decreasing in $t$. Moreover, repeating the computations performed to obtain (\ref{convergencetothelaplaceexponentofaninversegaussiansubordinator}) we get that \eqref{eq:dispabove3} equals
\[
=\limsup_{\delta\to0}1-\exp\left(\delta\left(\bar{E}_{\delta}-\sqrt{\bar{E}_{\delta}^2+\lambda}\right)\right)=0
\]
That plus (\ref{marginalsawayfromzero}) yield that
\[
\lim_{k\to\infty}\E\left[\exp(-\lambda k^{-2}\bar{V}^{(k)}_{kx})\vert (\bar{M}^k_t)_{t\geq0}\right]
\]
\be\label{convergenceofmarginals}
=\prod_{i:\bar{a}_i<x}\exp\left(-(\bar{b}_i\wedge x-\bar{a}_i)(\sqrt{\lambda+\bar{E}_{\bar{a}_i}^2}-\bar{E}_{\bar{a}_i})\right)
\ee
which is the Laplace transform of $\bar{I}_x$. We have proved convergence of marginals.

The convergence of finite-dimensional distributions follows from (\ref{convergenceofmarginals}) and independence. It just remains to show tightness.

 In order to prove tightness we use
\cite[Theorem 15.6]{Billingsley68} which states that the tightness in the $J_1$ topology is implied by
\begin{equation}\label{eq:dispabove4}
\E\left[(k^{-2}\bar{V}^{(k)}_{kx_2}-k^{-2}\bar{V}^{(k)}_{kx})^\beta(k^{-2}\bar{V}^{(k)}_{kx}-k^{-2}\bar{V}^{(k)}_{kx_1})^\beta\middle\vert (\bar{M}^k_t)_{t\geq0}\right]\leq|F(t_2)-F(t_1)|^{2\alpha}
\end{equation}
for $x_1\leq x\leq x_2$ and $k\geq1$ where $\beta\geq0$, $\alpha>1/2$ and $F$ is a nondecreasing, continuous function on $[0,T]$.
By independence, \eqref{eq:dispabove4} is equivalent to
\be\label{tightness}
\E\left[(k^{-2}\bar{V}^{(k)}_{kx_2}-k^{-2}\bar{V}^{(k)}_{kx_1})^\beta\middle\vert (\bar{M}^k_t)_{t\geq0}\right]\leq|F(t_2)-F(t_1)|^{\alpha}.
\ee
But we have that
\[
\E\left[k^{-2}\bar{V}^{(k)}_{kx_2}-k^{-2}\bar{V}^{(k)}_{kx_1}\middle\vert (\bar{M}^k_t)_{t\geq0}\right]\leq(x_2-x_1)\E\left[k^{-1}N_{\bar{M}^k_{kx_2}}\right]
\]
again, because for any $p_1\geq p_2$ we have that $N_{p_1}$ stochastically dominates $N_{p_2}$ and $\bar{M}^k_t$ is non-decreasing in $t$.
But, using display \eqref{convergencetothelaplaceexponentofaninversegaussiansubordinator}, it is easy to see that $\E(k^{-1}N_{\bar{M}^k_{kx_2}})$ converges to $1/2\bar{E}_{x_2}^{-1}$ and so (\ref{tightness}) is satisfied with $\beta=\alpha=1$. Hence we have proved Lemma \ref{quenchedipc}.
\end{proof}

\subsection{Proof of condition $\tilde{L}$}\label{sect_LIPC}

In order to prove Assumption $\tilde{\text{L}}$ we let $\B_p^n$ be a random tree having the law of a percolation cluster on $\mathbb{T}_2^\ast$ of parameter $p$ conditioned on having $n$ vertices. It is not hard to see that the distribution of $\B_p^n$ is uniform over the subtrees of $\mathbb{T}_2^\ast$ having $n$ vertices (that comes from the fact that, for each sub-tree $K$ of $\mathbb{T}_2^\ast$ having $n$ vertices, we have that $\Pb[\B_p^n=K]=p^n(1-p)^{n+1}$). Hence the law of $\B_p^n$ does not depend on $p$. In particular, for any $p\in(0,1)$, the law of $\B_p^n$ equals the law of $\B^n=\B^n_{1/2}$. Then Assumption $\tilde{\text{L}}$ for $X^{\IPC\ast}$ follows from Proposition \ref{assumptionL}.

\subsection{Proof of Theorem~\ref{thm:IPC} and Theorem~\ref{t:descriptionofzipc}}\label{s:proofoftheorem2}
Recall that $(\Lv_x)_{x\in\N}$ denotes the random sequence of branches emerging from the backbone of the IPC. Let $(\Lv_x)_{x\in\Z\setminus\N}$ be a sequence of random trees independent of $(\Lv_x)_{x\in\N}$ and distributed as an i.i.d.~sequence of critical percolation clusters on $\mathbb{T}_2^*$.
Let $X^{\IPC\ast}$ be a randomly trapped random walk with $(\tilde{\nu}[\Lv_x])_{x\in\Z}$ as its random trapping landscape.

Let $\mathbb{F}_1\in M_1(\frak{F}^\ast)$ be as in Proposition \ref{convergenceofWIICast}. Let $(I_x)_{x\geq0}$ be as in Lemma \ref{annealedipc} and $(V_x)_{x\geq0}$ be a $1/2$-stable subordinator independent of $(I_x)_{x\geq0}$
Let \[
I^\ast_x:=\begin{cases}
         I_x &: x\geq0\\
         -V_x &: x< 0.
        \end{cases}
\]
 \begin{proposition}\label{convergenceofWIPCast}
 $(\epsilon X^{\IPC\ast}_{\epsilon^{-3}t})_{t\geq0}$ converges in distribution to  $(B^{\mathbb{F}_1,I^\ast}_t)_{t\geq 0}$ on $(D(\R_+),J_1)$.
\end{proposition}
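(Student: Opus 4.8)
The plan is to deduce Proposition~\ref{convergenceofWIPCast} from the non-i.i.d.\ convergence result Theorem~\ref{RTRWIPC}, by checking that $X^{\IPC\ast}$ satisfies assumptions $\widetilde{\text{HT}}$ and $\tilde{\text{L}}$ with $\gamma=1/2$, limiting environment $V^0=I^\ast$, and limiting Laplace-exponent law $\mathbb{F}_1$. The first point to settle is the structural hypothesis of Theorem~\ref{RTRWIPC}: conditionally on the mean landscape $(m(\pi^{\IPC\ast}_x))_{x\in\Z}=(|\Lv_x|)_{x\in\Z}$, the trapping landscape $(\tilde{\nu}[\Lv_x])_{x\in\Z}$ should be a product $\otimes_x P_{m_x}$. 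For $x<0$ this is immediate since the branches are i.i.d.\ critical percolation clusters; for $x\ge 1$ it follows from \cite[Proposition~2.1]{ipc} together with the observation recalled in Section~\ref{sect_LIPC} that a percolation cluster on $T^\ast$ conditioned on its size is uniform over subtrees of that size, hence has a law that does not depend on the percolation parameter. Thus, recalling $m(\tilde{\nu}[I])=|I|$, the family $(P_a)_{a>0}$ is exactly the law of $\tilde{\nu}[\B^a]$, as in the IIC problem.

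Next I would verify assumption $\widetilde{\text{HT}}$, i.e.\ the convergence of $(\epsilon^{2}V_{\epsilon^{-1}x})_{x\in\R}$ in $(D(\R),J_1)$ to a strictly increasing, pure-jump process. Since the negative branches are independent of the positive (IPC) branches, it suffices to treat the two half-lines and then concatenate using independence, noting that the limit has no jump at $x=0$ so the gluing is harmless. On $[0,\infty)$ the required convergence to $(I_x)_{x\ge0}$ is precisely Lemma~\ref{annealedipc}. On $(-\infty,0]$ the increments of $V$ are i.i.d.\ with $\Pb[m(\tilde{\nu}[\B_0])>u]\sim \pi^{-1/2}u^{-1/2}$ by Lemma~\ref{lem:assumptionhtforiic}, so the classical functional stable limit theorem gives convergence of $(\epsilon^{2}V_{\epsilon^{-1}x})_{x\le0}$ to minus a $1/2$-stable subordinator, which matches the definition of $I^\ast$ on the negative axis. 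Both $I$ (built from inverse Gaussian subordinators, which have infinite Lévy measure and no drift) and the $1/2$-stable subordinator are strictly increasing and pure-jump, hence so is $I^\ast$, and $\widetilde{\text{HT}}$ holds.

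For assumption $\tilde{\text{L}}$ I would again appeal to Section~\ref{sect_LIPC}: since $P_{d(\epsilon)}$ is the law of $\tilde{\nu}[\B^{d(\epsilon)}]$, which is parameter-independent and therefore identical to the corresponding object in the IIC problem, the convergence $\Psi_\epsilon(\pi^{d(\epsilon)})\to\mathbb{F}_1$, with $\mathbb{F}_1$ the law of the Laplace exponent of the inverse local time at the root of the Brownian motion on the CRT, is exactly Lemma~\ref{assumptionL} and Corollary~\ref{cor:assumptionL}; non-triviality of $\mathbb{F}_1$ holds because the Brownian motion on the CRT spends a positive amount of time away from the root. With both assumptions in place, Theorem~\ref{RTRWIPC} gives convergence of $(\epsilon X^{\IPC\ast}_{q(\epsilon)^{-1}t})_{t\ge0}$ to $B^{\mathbb{F}_1,I^\ast}$, and it then only remains to rewrite this in the normalization of the statement: the heavy-tail constant $c=\pi^{-1/2}$ from Lemma~\ref{lem:assumptionhtforiic} fixes $d(\epsilon)=\pi^{-1}\epsilon^{-2}$ and $q(\epsilon)=\pi\epsilon^{3}$, and the resulting factor of $\pi$ is absorbed by a deterministic time rescaling together with the corresponding rescaling of the Poisson intensity underlying $I^\ast$ and of the Laplace exponents, exactly as carried out for the IIC in Section~\ref{sect_zzz}.

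I expect the main difficulty to lie not in this proposition per se — once Lemma~\ref{annealedipc} and Lemma~\ref{assumptionL} are granted, the argument is an assembly of already-proven inputs — but rather in those inputs and in ensuring the hypotheses of Theorem~\ref{RTRWIPC} (whose proof parallels \cite[Theorem~2.13]{rtrw}) are genuinely met in the two-sided, non-i.i.d.\ setting. The delicate point specific to the IPC is the behaviour near $x=0$, where the percolation parameters $M_k$ approach criticality and the inverse Gaussian parameters $\gamma=\sqrt{2}\,\bar E_{\bar a_i}$ degenerate; this, however, is already controlled inside the proof of Lemma~\ref{annealedipc} via the $\limsup_{\delta\to0}$ estimate there, so the present proof reduces to bookkeeping and to invoking the cited results.
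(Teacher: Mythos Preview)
Your proposal is correct and follows the same route as the paper: verify $\widetilde{\text{HT}}$ via Lemma~\ref{annealedipc}, verify $\tilde{\text{L}}$ via the parameter-independence argument of Section~\ref{sect_LIPC} reducing to Lemma~\ref{assumptionL}, and conclude by Theorem~\ref{RTRWIPC}. If anything you are more careful than the paper, which cites only Lemma~\ref{annealedipc} for $\widetilde{\text{HT}}$ and leaves the negative half-line, the conditional-independence hypothesis, and the $\pi$-normalization implicit; your treatment of these points is correct.
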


\begin{proof}
 Assumption $\widetilde{\text{HT}}$ follows from Lemma \ref{annealedipc} and Assumption $\tilde{\text{L}}$ was proved in the previous subsection. This implies the result by Theorem~\ref{RTRWIPC}.
 \end{proof}

It is easy to see that $X^\IPC$ is the restriction of $X^{\IPC\ast}$ to the positive axis. Also $B^+_{\psi^{\IPC}_t}$ is the restriction of $B^{\mathbb{F}_1,I^\ast}_\cdot$ to the positive axis. Hence we can obtain Theorem \ref{thm:IPC} from Proposition \ref{convergenceofWIPCast} in the same way that we obtained Theorem \ref{thm:IIC} from Proposition \ref{convergenceofWIICast}.

\section{Proof of Theorem \ref{prop:alternativeexrepssionforziic} and Theorem \ref{prop:alternativeexrepssionforzipc}}\label{sect_last}

In this section we will prove Theorems \ref{prop:alternativeexrepssionforziic} and \ref{prop:alternativeexrepssionforzipc}. We will start with Theorem \ref{prop:alternativeexrepssionforziic}. Let us make some preliminary definitions to prepare the argument. We recall that $(B_t^{\mathcal{F}})_{t\geq0}$ is the Brownian motion in the Continuum random forest $(\mathcal{F},d,\mu)$ and $(l(x,t))_{x\in{\mathcal{F}},t\geq0}$ is a jointly continuous version of its local time.

Next, we express a reflected Brownian motion in $[0,\infty)$ as a time change of $B^{\mathcal{F}}$.  Let $\lambda$ be the Lebesgue measure on the backbone $[0,\infty)$. Define
\[A_{\text{Bb}}(t):=\int_{\R_+}l(x,t)\lambda(dx)\]
and its right continuous generalized inverse
 \[\tau_{\text{Bb}}(t):=\inf\{s\geq 0:A_{\text{Bb}}(s)>t \}.\]
 It follows from the trace theorem for Dirichlet forms (see Theorem 6.2.1 in \cite{fukushima2010dirichlet} and Lemma 2.4 in \cite{rwrt}) that the time-changed process $B_t^{\text{Bb}}:=B^{\mathcal{F}}_{\tau_{\text{Bb}}(t)}$ is the Brownian motion in the backbone $[0,\infty)$ with respect to the measure $\lambda(dx)$, according to Definition \ref{definitionofbmonadendrite}. Furthermore, it is not hard to see from Definition \ref{definitionofbmonadendrite} that the Brownian motion in the backbone $[0,\infty)$ with respect to the measure $\lambda(dx)$ is simply a standard, reflected Brownian motion. Therefore, $B^{\text{Bb}}$ has the law of a reflected BM.
 Moreover, Lemma 3.4 in \cite{rwrt} states that $l_{\text{Bb}}(x,t):=l(x,\tau(t)), t\geq 0, x\in[0,\infty)$ is a jointly continuous version of the local time of $B_t^{\text{Bb}}$.

It is also possible to construct the BM in each one of branches $\T_i$ of the Continuum Random Forest as a time change of $B^{\mathcal{F}}$. Defining
\[A_i(t):=\int_{\T_i} l(x,t) \bar{y}_i\mu_i(dx) \quad \text{and}\quad \tau_i(t):=\inf\{s\geq0:A_i(s)> t\},\]
again, by the trace theorem for Dirichlet forms we have that $B^{\T_i}_t:=B^{\mathcal{F}}_{\tau_i(t)}$ has the law of the Brownian motion in the branch $(\T_i,\bar{y}_i^{\scriptscriptstyle\frac{1}{2}}d_i,\bar{y}_i\mu_i)$ (according to Definition \ref{definitionofbmonadendrite}) and $l_i(x,t):=l(x,\tau_i(t)),t\geq0, x\in\T_i$ is a jointly continuous version of the local time of $B^{\T_i}$.

Now we state and prove two preliminary lemmas which depend only in the joint continuity of the local times.
 \begin{lemma}
Almost surely, for all $t\geq 0$ and $i\in\N$, there exists a decreasing sequence of times $(s_n)_{n\in\mathbb{N}}$ with 
\begin{equation}\label{eq:dentrodelarama}
\lim_{n\to\infty} s_n=\tau_i(t)\quad\text{ and }\quad B^\mathcal{F}_{s_n}\in\T_i\setminus\{x_0\}.
\end{equation}
Similarly, for all $t\geq 0$, there exists a decreasing sequence of times $(s_n)_{n\in\mathbb{N}}$ with 
\begin{equation}\label{eq:dentrodelacolumna}
\lim_{n\to\infty} s_n=\tau_{\text{Bb}}(t)\quad \text{ and }\quad B^\mathcal{F}_{s_n}\in\R_+\setminus\{x_0\}.
\end{equation}
 \end{lemma}
 \begin{proof}
 We start with the proof of \eqref{eq:dentrodelarama}.
Since $A_i(\cdot)$ is continuous and $\tau_i(t)=\inf\{s>0:A_i(s)>t\}$, there exists a decreasing sequence $(\tilde{s}_n)_{n\in\mathbb{N}}$ with $\lim_{n\to\infty}\tilde{s}_n=\tau_i(t)$ and $A_i(\tilde{s}_n)>A_i(\tilde{s}_{n+1})$ for all $n\in\N$.
Therefore, for all $n\in\mathbb{N}$,  $l_i(\cdot,\tilde{s}_{n+1})>l_i(\cdot,\tilde{s}_n)$ in a set of positive $\mu_i$-measure. Hence, there exists points $x_n\in\T_i\setminus\{x_0\}$ with $l(x_{n},\tilde{s}_{n+1})>l(x_n,\tilde{s}_n)$. This implies that there exists a time $s_n\in(\tilde{s}_n,\tilde{s}_{n+1})$ with $B_{s_n}^\mathcal{F}=x_n$. The sequence $(s_n)_{n\in\N}$ has the desired properties.

The proof of \eqref{eq:dentrodelacolumna} is completely analogous.
 \end{proof}
 
 \begin{lemma}\label{lem:tauiai} Almost surely, for all $s\geq 0$ and $i\in\mathbb{N}$,
\begin{equation}
l(\bar{x}_i,\tau_i(A_i(s)))=l(\bar{x}_i,s).
\end{equation}
\end{lemma}
\begin{proof}
Assume for contradiction that, for some $s>0$,
\[l(\bar{x}_i,\tau_i(A_i(s)))>l(\bar{x}_i,s).\]
Then, given that $l(\bar{x}_i,\cdot)$ is continuous, there exists $r^*\in(s,\tau_i(A_i(s)))$ with
$l(\bar{x}_i,r^\ast)>l(\bar{x}_i,s)$. Therefore, since the local time is continuous in the space variable, it follows that $l(\cdot,r^\ast)>l(\cdot,s)$ at least in a neighborhood of $\bar{x}_i$. Therefore 
\[A_i(r^\ast)=\int_{\T_i} l(x,r^\ast) \bar{y}_i\mu_i(dx)>\int_{\T_i} l(x,s) \bar{y}_i\mu_i(dx)=A_i(s).\]
Hence $r^\ast\geq \inf\{r\geq 0:A_i(r)>A_i(s))\}=\tau_i(A_i(s))$, which is in contradiction with $r^*\in(s,\tau_i(A_i(s)))$. 
\end{proof}
 Having dealt with the preliminary lemmas, we are ready to prove one of the main ingredients for the proofs of Theorems \ref{prop:alternativeexrepssionforziic} and \ref{prop:alternativeexrepssionforzipc}.
\begin{lemma}\label{lem:inverselocaltime}
 Almost surely, for all $t\geq 0$, $i\in\mathbb{N}$,
\[l^{-1}_i(\bar{x}_i,
l_{\text{Bb}}(\bar{x}_i,t))=A_i(\tau_{\text{Bb}}(t))\]
\end{lemma}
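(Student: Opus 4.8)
\emph{Proof proposal.} My plan is to unfold the three time changes in play and to recognise both sides as natural excursion‑theoretic quantities. On the right, $A_i(\tau(t))$ is, by the occupation‑time formula, the total (real) time that $X$ spends inside the branch $\T_i$ before the instant $\tau(t)$ at which the backbone clock $A$ first exceeds $t$; on the left, $\tilde l(\bar x_i,t)=l(\bar x_i,\tau(t))$ is the local time that the backbone‑projected motion $B^+=X\circ\tau$ has accumulated at $\bar x_i$ by backbone‑time $t$, and $l^{-1}_i(\bar x_i,\cdot)$ is the inverse local time at $\bar x_i$ of the branch motion $X^i=X\circ\tau_i$. Thus the identity asserts that the time $X$ has spent in $\T_i$ up to real time $\tau(t)$ is exactly the $X^i$‑time needed to build up local time $\tilde l(\bar x_i,t)$ at the root of $\T_i$.

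The key structural input I would isolate first is that the local time of $X$ at $\bar x_i$ ``only sees'' the branch $\T_i$: since $\mu$ assigns no mass to the backbone, $l(\bar x_i,\cdot)$ is constant on every maximal time interval on which $X$ stays out of $\T_i$, and hence it factors through the branch‑occupation clock $A_i$, i.e. there is a continuous non‑decreasing $g$ with $l(\bar x_i,s)=g(A_i(s))$. Together with \cite[Lemmas 2.5, 2.6 and 3.3]{rwrt} — which give that $X^i$ is Brownian motion on $(\T_i,\bar y_i^{1/2}d_i,\bar y_i\mu_i)$ and that $l_i(\bar x_i,u)=l(\bar x_i,\tau_i(u))=g(u)$ is a jointly continuous version of its local time at the root — this identifies $l^{-1}_i(\bar x_i,\cdot)=g^{-1}$ and realises it as the pure‑jump process whose jump at level $\ell$ is the length of the excursion of $X$ into $\T_i$ carrying local‑time level $\ell$. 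In particular $l^{-1}_i(\bar x_i,v)$ is the sum of the lengths of all excursions of $X$ into $\T_i$ whose local‑time level is at most $v$.

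It then remains to check that $A_i(\tau(t))$ is the same sum with $v=\tilde l(\bar x_i,t)$. Since each excursion of $X$ into $\T_i$ has positive length and the backbone meets $\T_i$ only at $\bar x_i$, the backbone point $X_{\tau(t)}$ is never strictly inside a $\T_i$‑excursion; consequently $A_i(\tau(t))$ equals the sum of the \emph{full} lengths of the $\T_i$‑excursions completed by real time $\tau(t)$, and a $\T_i$‑excursion is completed by $\tau(t)$ precisely when its local‑time level is at most $l(\bar x_i,\tau(t))=\tilde l(\bar x_i,t)$. The one point needing care is the excursion at level exactly $\tilde l(\bar x_i,t)$, if it exists: here I would use that $\tau(t)=\inf\{s:A(s)>t\}$ cannot be the starting time of a $\T_i$‑excursion (at such a start $A$ is locally constant, so it could not be the first time $A$ passes $t$), which forces that boundary excursion to be completed by $\tau(t)$ and hence counted on both sides. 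Matching the two sums yields the identity.

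The hard part will be precisely this last bookkeeping: making the excursion decomposition of the inverse local time rigorous and matching the generalised inverses \emph{exactly} (not merely up to the flat stretches of $g$), for which the two facts above — that $\tau(t)$ lies on the backbone, and that $A$ is locally constant at the onset of each $\T_i$‑excursion — are the essential inputs. The other technical point is the structural claim that $l(\bar x_i,\cdot)$ is an additive functional of $X^i$ (equivalently, factors through $A_i$); I expect this to follow from the joint continuity of the local times of $X$ together with the positivity of the lengths of branch excursions, along the lines of the corresponding arguments in \cite{rwrt}.
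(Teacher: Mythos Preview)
Your approach is correct but more elaborate than the paper's. The paper gives a direct sandwiching argument: for any $\delta>0$ it sets $r_\pm:=A_i(\tau(t))\pm\delta$ and shows (i) $l(\bar x_i,\tau_i(r_-))\le l(\bar x_i,\tau(t))$ simply because $A_i$ is continuous, so $\tau_i(r_-)<\tau(t)$; and (ii) $l(\bar x_i,\tau_i(r_+))> l(\bar x_i,\tau(t))$ because $X_{\tau(t)}$ lies on the backbone and any later visit of $X$ to $\T_i$ must pass through $\bar x_i$, so the local time at $\bar x_i$ strictly increases between $\tau(t)$ and $\tau_i(r_+)$. Letting $\delta\downarrow0$ gives the identity. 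No factorisation of $l(\bar x_i,\cdot)$ through $A_i$ and no excursion decomposition are invoked.

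Your route packages the same two inputs (``$X_{\tau(t)}$ is on the backbone'' and ``entry into $\T_i$ forces a visit to $\bar x_i$'') into a structural statement---$l(\bar x_i,\cdot)=g\circ A_i$---and then reads the lemma as $g^{-1}\!\big(g(A_i(\tau(t)))\big)=A_i(\tau(t))$, with the boundary bookkeeping handling the flat stretches of $g$. This is more illuminating and makes the excursion picture explicit, at the cost of having to justify the factorisation. One small point: the reason $l(\bar x_i,\cdot)$ factors through $A_i$ is not that $\mu$ gives no mass to the backbone, but that the Brownian motion is \emph{regular} at $\bar x_i$ (from $\bar x_i$ it enters the interior of $\T_i$ immediately, so $A_i$ cannot stay flat while $l(\bar x_i,\cdot)$ grows); you essentially say this later, but the sentence as written is misleading. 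Apart from that, your outline is sound and your identification of the delicate step---matching the generalised inverses exactly at the boundary level---is precisely where the paper's argument (ii) does its work in a single line.
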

\begin{proof}
By definition
\[
l^{-1}_i(\bar{x}_i,l_{\text{Bb}}(\bar{x}_i,t))=\inf\{s\geq 0: l_i(\bar{x}_i,s)>l_{\text{Bb}}(\bar{x}_i,t)\}.
\]
Since $A_i$ is increasing and continuous, we can replace $s=A_i(u)$ in the display above, to obtain
\[
l^{-1}_i(\bar{x}_i,l_{\text{Bb}}(\bar{x}_i,t))=\inf\{A_i(u)\geq 0: l_i(\bar{x}_i,A_i(u))>l_{\text{Bb}}(\bar{x}_i,t)\}.
\]
Again, since $A_i$ is increasing and continuous, we get that
\[
\inf\{A_i(u)\geq 0: l_i(\bar{x}_i,A_i(u))>l_{\text{Bb}}(\bar{x}_i,t)\}=A_i(\inf\{u\geq 0: l_i(\bar{x}_i,A_i(u))>l_{\text{Bb}}(\bar{x}_i,t)\}).
\]
Using the definitions of $l_i$ and $l_{\text{Bb}}$, we get that
\[
A_i(\inf\{u\geq 0: l_i(\bar{x}_i,u)>l_{\text{Bb}}(\bar{x}_i,t)\})=A_i(\inf\{u\geq 0: l(\bar{x}_i,\tau_i(A_i(u)))>l(\bar{x}_i,\tau_{\text{Bb}}(t))\}).
\]
Using Lemma \ref{lem:tauiai}, we get that
\[
A_i(\inf\{u\geq 0: l(\bar{x}_i,\tau_i(A_i(u)))>l(\bar{x}_i,\tau_{\text{Bb}}(t))\}=A_i(\inf\{u\geq 0: l(\bar{x}_i,u)>l(\bar{x}_i,\tau_{\text{Bb}}(t))\}.
\]
Hence
\begin{equation}\label{eq:identitytorefer}
l^{-1}_i(\bar{x}_i,l_{\text{Bb}}(\bar{x}_i,t))=A_i(\inf\{u\geq 0: l(\bar{x}_i,u)>l(\bar{x}_i,\tau_{\text{Bb}}(t))\}.
\end{equation}
Therefore, it suffices to show that, almost surely
\begin{equation}\label{eq:2ineq}
\begin{aligned}
A_i(\inf\{u\geq 0: l(\bar{x}_i,u)>l(\bar{x}_i,\tau_{\text{Bb}}(t))\})=A_i(\tau_{\text{Bb}}(t))\end{aligned}
\end{equation}
for all $t\geq0$.

Let $t>0$ fixed and $\theta_{\bar{x}_i}(\tau_{\text{Bb}}(t)):=\inf\{s>\tau_{\text{Bb}}(t):B^{\mathcal{F}}_s=\bar{x}_i\}$.  First, $A_i$ increases only when $B^{\mathcal{F}}$ is at $\T_i$. Also, at time $\tau_{\text{Bb}}(t)$, $B^{\mathcal{F}}$ is at the backbone. Therefore, if $s\in[\tau_{\text{Bb}}(t),\theta_{\bar{x}_i}(\tau_{\text{Bb}}(t))]$ then $B^\mathcal{F}_s\not\in \mathcal{T}_i$. Therefore, $A_i$ cannot increase between $\tau_{\text{Bb}}(t)$ and $\theta_{\bar{x}_i}(\tau_{\text{Bb}}(t))$.
Hence
\[
A_i(\tau_{\text{Bb}}(t))=A_i(\theta_{\bar{x}_i}(\tau_{\text{Bb}}(t)))
\]
Therefore, to finish the proof of the lemma, it is enough to show that, almost surely
\begin{equation}\label{eq:newproof}
\theta_{\bar{x}_i}(\tau_{\text{Bb}}(t))=\inf\{u\geq 0: l(\bar{x}_i,u)>l(\bar{x}_i,\tau_{\text{Bb}}(t))\}
\end{equation}
for all $t\geq0$.
Moreover, since, for each fixed $t$, $\tau_{\text{Bb}}(t)$ is a stopping time (for $B^{\mathcal{F}}$), it follows that $\theta_{\bar{x}_i}(\tau_{\text{Bb}}(t))$ is also a stopping time. 
Therefore, by the strong Markov property of the Brownian motion in the continuum random forest, we have that the local time at $\bar{x}_i$ increases immediately after $\theta_{\bar{x}_i}(\tau_{\text{Bb}}(t))$. That is \begin{equation}
l(\bar{x}_i,\theta_{\bar{x}_i}(\tau_{\text{Bb}}(t)))<l(\bar{x}_i,s^*),\quad \forall s^*>\theta_{\bar{x}_i}(\tau_{\text{Bb}}(t)).
\end{equation}
This shows \eqref{eq:newproof} for a fixed $t$. To have the display for all $t\geq 0$, it suffices to note that both, the left hand side and the right hand side are right-continuous functions of $t$.
\end{proof}
We also need to guarantee independence of the processes involved.
\begin{lemma}\label{lem:independence}
The family of processes $B^{\text{Bb}},B^{\T_i},i\in\mathbb{N}$ is jointly independent.
\end{lemma}
\begin{proof}
	We will start showing that $B^{\T_i}$ is independent of $B^{\T_j}$ for $i\neq j$.
	Let $a,b$ points in the backbone $\bar{x}_i<a<b<\bar{x}_j$ (where we are assuming without loss of generality, that $\bar{x}_i<\bar{x}_j$).
	Let $\theta_0=0$,
	\[\theta_1:= \inf\{s\geq 0: B^{\mathcal{F}}_s=b\}\]
	and
	\[\theta_{2k}:=\inf\{s>\theta_{2k-1}:B^{\mathcal{F}}_s=a\},\]
	\[
	\theta_{2k+1}:=\inf\{s>\theta_{2k}:B^{\mathcal{F}}_s=b\} 
	\]
	for $k\in\N$.
	
Then, one has that $B^{\mathcal{T}_i}$
and $B^{\mathcal{T}_j}$ depend upon disjoint intervals of time. More precisely, $B^{\mathcal{T}_i}$ depends on $\cup_{k\geq0} [\theta_{2k},\theta_{2k+1})$ and $B^{\mathcal{T}_i}$ depends on $\cup_{k\geq 0} [\theta_{2k+1},\theta_{2k+2})$. Therefore, the strong Markov property of $B^{\mathcal{F}}$ at the stopping times $\theta_k,k\in\mathbb{N}$ gives the desired independence.
 
 Now, we will show that, for all $i\in\mathbb{N}$, $B^{\text{Bb}}$ and $B^{\mathcal{T}_i}$ are independent.
 We will use the following property of the CRT: For each $n\in\N$ (sufficiently large), there exists a unique point $a_n$ at distance $1/n$ from the root such that the volume of the descendants of $a_n$ is larger that $1/2$ (any constant value would work). This can be shown from the excursion representation of the CRT, we omit the proof. 
 Moreover, it follows from said representation that
 \begin{equation}\label{eq:vanishingmeasure}
\mu_i(\{x\in\T_i: x \nsucceq a_n\})\stackrel{n\to\infty}{\to} 0
 \end{equation} 
 almost surely, where we recall that $\prec$ denotes genealogical order.
  
Let $\theta^{\bar{x}_i,\text{in}}_0=0$ and $\theta^{\bar{x}_i,\text{out}}_0=\inf\{s\geq 0: B^\mathcal{F}_s= a_n\}$.
For $k\in\N$,  
\[\theta^{\bar{x}_i,\text{in}}_k=\inf\{s\geq \theta^{\bar{x}_i,\text{out}}_{k-1}: B^\mathcal{F}_s=\bar{x}_i\},\]
\[
\theta^{\bar{x}_i,\text{out}}_k=\inf\{s\geq \theta^{\bar{x}_i,\text{in}}_k: B^\mathcal{F}_s=a_n\}
\]
Let also $\theta^{a_n,\text{in}}_0=\inf\{s\geq 0: B^\mathcal{F}_s=a_n\}$.
For $k\in\N$,  
\[\theta^{a_n,\text{in}}_k=\inf\{s\geq \theta^{a_n,\text{out}}_{k-1}: B^\mathcal{F}_s=a_n\},\]
\[
\theta^{a_n,\text{out}}_k=\inf\{s\geq \theta^{a_n,\text{in}}_k: B^\mathcal{F}_s=\bar{x}_i\}.
\]

It follows that the intervals $[\theta^{\bar{x}_k,\text{in}}_k,\theta^{\bar{x}_k,\text{out}}_k),[\theta^{a_n,\text{in}}_k,\theta^{a_n,\text{out}}_k)$, $k\in\mathbb{N}$ are pairwise disjoint.
It is not hard to show that $B^{+}$ depends only on the time intervals $[\theta^{\bar{x}_i,\text{in}}_k,\theta^{\bar{x}_i,\text{out}}_k), k\in\N$.

Let us define $A^n(t):=\int_{\{x:x\succeq a_n\}}l(x,t)\lambda(dx)$ and
$\tau^n(t):=\inf\{s\geq 0:A^n(s)>t \}$.
Let also
$B^{\T_i,n}_t:=B^{\mathcal{F}}_{\tau^n(t)}$.
It is not hard to show that $B^{\T_i,n}_t$ depends only on the time intervals $[\theta^{a_n,\text{in}}_i,\theta^{a_n,\text{out}}_i), i\in\N$.
Therefore, using the Strong Markov property of $B^{\mathcal{F}}$ at the stopping times $\theta^{a_n,\text{in}}_k,\theta^{a_n,\text{out}}_k,\theta^{\bar{x}_i,\text{in}}_k,\theta^{\bar{x}_i,\text{out}}_k, k\in\N$, it is possible to show that $B^{\T_i,n}$ and $B^{\text{Bb}}$ are pairwise independent.

Moreover, using \eqref{eq:vanishingmeasure}, it can be shown that $A^n_i$ converges almost surely (and uniformly over compact intervals of time) to $A_i$. It follows that $\tau_i^n$ converges to $\tau_i$. This, together with the uniform continuity of $B^{\mathcal{F}}$, implies that $B^{\T_i,n}$ converges to $B^{\T_i}$. Hence, $B^{\T_i,n}$ and $B^{\text{Bb}}$ are independent, it follows that $B^{\T_i}$ and $B^{\text{Bb}}$ are independent. 

The same reasoning can be generalized to show the joint independence for any finite number of processes in the family $B^{\text{Bb}},B^{\T_i},i\in\mathbb{N}$. This finishes the proof.
\end{proof}

 Recall the definition of $\phi^{\IIC}$ from \eqref{eq:defphiiic}. Putting together the last two lemmas, we get
\begin{lemma}\label{lem:quid}
	 The process $(B^{\text{Bb}}_{A_{\text{Bb}}(t)})_{t\geq0}$ has the same distribution as $(Z^{\IIC}_t)_{t\geq 0}$.
\end{lemma}
\begin{proof}
 Let us express the time change $A_{\text{Bb}}$ as the \emph{inverse of its inverse}, that is $A_{\text{Bb}}(t)=\inf\{s\geq 0:\tau_{\text{Bb}}(s)>t\}$, 
which follows since $A_{\text{Bb}}$ is non-decreasing and $A_{\text{Bb}}(0)=0$ (see \cite[Corollary 13.6.1]{whi02}).
 Hence, recalling the definition $Z^\IIC_t:=B^+_{\psi^{\IIC}_t}$ (with $\psi^{\IIC}:=(\phi^\IIC)^{-1}$), we see that it suffices to show that the pair $(B^{\text{Bb}}_t,\tau_\text{Bb}(t))$ has the same distribution as $(B^+_t,\phi^\IIC_t)$. We already know that $B^{\text{Bb}}$ and $B^{+}$ have the same distribution, namely, that of a reflected Brownian motion. It remains to show that $\tau_\text{Bb}(t)$ can be constructed from $B^{\text{Bb}}$ in the same fashion that $\phi^\IIC_t$ is constructed from $B^+$, this is, by summing independent processes (having the annealed law of the inverse local time at the root of the BM on the CRT) evaluated at the local time of $B^+$ in the points $\bar{x}_i$.
 We can write
\begin{equation}\label{eq:tracethm}
\begin{aligned}
\tau_{\text{Bb}}(t)=&\int_{\mathcal{F}}l(x,\tau_{\text{Bb}}(t))\mu(dx)\\
=&\sum_{i\in\N}\int_{\T_i}l(x,\tau_{\text{Bb}}(t))\bar{y}_i\mu_i(dx).
\end{aligned}
\end{equation}
where we recall that $(\T_i)_{i\in\N}$ denotes the collection of branches of the CRF and we are using the fact that the backbone has $\mu$-measure $0$.
For each $i$, the $i$-th summand in the display above is equal to $A_i(\tau_{\text{Bb}}(t))$. Therefore, by Lemma \ref{lem:inverselocaltime} and  \eqref{eq:tracethm} 
\be\label{eq:alternativedescription}
\tau_{\text{Bb}}(t)=\sum_{i\in\N} l^{-1}_i(\bar{x}_i,l_{\text{Bb}}(\bar{x}_i,t)).
\ee
 By simple scaling properties of the CRT, the law of $(\bar{y}_i^{-2/3}l^{-1}_i(\bar{x}_i,\bar{y}_i^{2}s))_{s\geq0}$ (when regarding $\T_i$ as a random object) has the annealed law of the inverse local time at the root of the BM on the CRT. Hence, 
recalling that the processes $S^i$ appearing in the definition of $\phi^\IIC$ have the annealed law of the inverse local time at the root of the BM on the CRT, we get that
 \begin{equation}\label{eq:lastformula}
 (l^{-1}_i(\bar{x}_i,s))_{s\geq0}\stackrel{d}{=}(\bar{y}_i^{-3/2}S^i(\bar{y}_i^{\scriptscriptstyle\frac{1}{2}}s))_{s\geq0},
 \end{equation}
 where $\stackrel{d}{=}$ denotes equality in distribution. 
 Finally, since for each $i\in\N$ the inverse local time process $(l^{-1}_i(\bar{x}_i,t))_{t\geq0}$ can be constructed $B^{\T_i}$ we can apply Lemma \ref{lem:independence} to get the independence between the processes $l^{-1}_i(\bar{x}_i,\cdot),i\in\N$ and $l_{\text{Bb}}(\cdot,\cdot)$. Therefore, displays \eqref{eq:alternativedescription} and \eqref{eq:lastformula} together with the fact that $(l_{\text{Bb}}(x,t))_{ x\in\R_+,t\geq0}$ is the local time of a reflected Brownian motion in $\R_+$ (and the independence guaranteed by Lemma \ref{lem:independence}) gives that
\[
(\phi^{\IIC}_t)_{t\geq0}\stackrel{d}{=}(\tau_{\text{Bb}}(t))_{t\geq0}.
\] 
  Therefore, in \eqref{eq:alternativedescription} the process $\tau_{\text{Bb}}$ is constructed from the local time of $B^{\text{Bb}}$ in the same way that $\phi^\IIC_t$ is constructed from the local time of $B^+$. This, together with the fact that the reflected BM, $B^{\text{Bb}}$ is independent of the processes $l_i^{-1}(\bar{x}_i,\cdot),i\in\N$ (guaranteed by Lemma \ref{lem:independence}), establishes that $(B^{\text{Bb}}_t,\tau_\text{Bb}(t))$ has the same distribution as $(B^+_t,\phi^\IIC_t)$.


\end{proof}

We are ready to prove Theorem 
\ref{prop:alternativeexrepssionforziic}.
\begin{proof}[Proof of Theorem \ref{prop:alternativeexrepssionforziic}]
Let $\theta_{\text{Bb}}(t):=\inf\{s> t: B^{\mathcal{F}}_s\text{ is in the backbone}\}$. 
By Lemma \ref{lem:quid}, it suffices to establish the following chain of identities:
\[B^{\text{Bb}}_{A_{\text{Bb}}(t)}=B^\mathcal{F}_{\tau_{\text{Bb}}(A_{\text{Bb}}(t))}=B^{\mathcal F}_{\theta_{\text{Bb}}(t)}=\pi(B^\mathcal{F}_t),\]
where $\stackrel{d}{=}$ denotes equality in law.
The first equality follows from the definition of $B^{\text{Bb}}$. The second equality
 will follow after we have showed that, almost surely,
\begin{equation}\label{eq:third}
\tau_{\text{Bb}}(A_{\text{Bb}}(t))=\theta_{\text{Bb}}(t)\quad \forall t\geq 0.
\end{equation}
The last equality is a consequence of the trivial fact that, for a continuous trajectory on a tree, the next hitting point of a subtree coincides with the current projection over that subtree.

Now we deal with the proof of \eqref{eq:third}.
It suffices to show that,
\begin{equation}\label{eq:third1}
\forall s >\theta_{\text{Bb}}(t), \quad A_{\text{Bb}}(s)>A_{\text{Bb}}(t)
\end{equation} 
and 
\begin{equation}\label{eq:third2}
\forall s <\theta_{\text{Bb}}(t), \quad A_{\text{Bb}}(s)\leq A_{\text{Bb}}(t).
\end{equation}
For the proof of \eqref{eq:third1}, we notice that, for all $t$ fixed, $\theta_{\text{Bb}}(t)$ is a stopping time. Therefore, almost surely,
\begin{equation}
l(B^{\mathcal{F}}_{\theta_{\text{Bb}}(t)},\theta_{\text{Bb}}(t))<l(B^{\mathcal{F}}_{\theta_{\text{Bb}}(t)},s^*),\quad \forall s^*>\theta_{\text{Bb}}(t).
\end{equation}
Moreover, since  the local time is continuous in the space variable, it follows from the display above that $l(\cdot,\theta_{\text{Bb}}(t))<l(\cdot,s^*)$ in an open set of the backbone, for all $s^*>\theta_{\text{Bb}}(t)$.
Hence $A_{\text{Bb}}(s^
*)>A_{\text{Bb}}(\theta_{\text{Bb}}(t))\geq A_{\text{Bb}}(t)$.
Therefore, for all $t\geq 0$
\begin{equation}
\mathbb{P}[\tau(A_{\text{Bb}}(t))=\theta_{\text{Bb}}(t)]=1.
\end{equation}
and, since both $\theta_{\text{Bb}}(\cdot)$ and $\tau_{\text{Bb}}(A_{\text{Bb}}(\cdot))$ are right continuous, display \eqref{eq:third} follows. This finishes the proof.
\end{proof}

The proof follows the same argument as that of Theorem \ref{prop:alternativeexrepssionforziic}.
Recall from ... that $B^{\tilde{\mathcal{F}}}$ denotes the Brownian motion in the modified forest $(\tilde{\mathcal{F}},\tilde{d},\tilde{\mu})$. 
Let $\tilde{l}(t,x)$ be a jointly continuous version of the local time of $B^{\tilde{\mathcal{F}}}$ and $\lambda$ denote the Lebesgue measure on $[0,\infty)$. 
Define
\[\tilde{A}_{\text{Bb}}(t):=\int_{\R_+}\tilde{l}(x,t)\lambda(dx) \quad \text{and}\quad \tilde{\tau}_{\text{Bb}}(t):=\inf\{s\geq 0:\tilde{A}_{\text{Bb}}(s)>t \}.\]
As in the case of $B^{\text{Bb}}$, we have that $\tilde{B}^{\text{Bb}}_t:=B^{\tilde{\mathcal{F}}}_{\tilde{\tau}_{\text{Bb}}(t)}$ is a reflected Brownian motion.
Let
\[\tilde{A}_i(t):=\int_{\T_i} l(x,t) \tilde{y_i}\mu_i(dx) \quad \text{and}\quad \tilde{\tau}_i(t):=\inf\{s\geq0:\tilde{A}_i(s)> t\},\]
again, by the trace theorem for Dirichlet forms we have that $\tilde{B}^{\T_i}_t:=B^{\tilde{\mathcal{F}}}_{\tilde{\tau}_i(t)}$ has the law of the Brownian motion in the branch $(\T_i,\tilde{y}_i^{\scriptscriptstyle\frac{1}{2}}d_i,\tilde{y}_i\mu_i)$ 
and $\tilde{l}_i(x,t):=\tilde{l}(x,\tilde{\tau_i}(t)),t\geq0, x\in\T_i$ is a jointly continuous version of the local time of $\tilde{B}^{\T_i}$.
\begin{lemma}\label{lem:shame}
The family of processes $\tilde{B}^{\text{Bb}},\tilde{B}^{\T_i},i\in\mathbb{N}$ is jointly independent.
\end{lemma}
\begin{proof}
The proof is completely analogous to that of Lemma \ref{lem:independence}.
\end{proof}

\begin{lemma}\label{eq:analogousidentity}
 Almost surely, for all $t\geq 0$, $i\in\mathbb{N}$,
\[
\tilde{l}^{-1}_i(\tilde{x}_i,
\tilde{l}_{\text{Bb}}(\tilde{x}_i,t))=\tilde{A}_i(\tilde{\tau}_{\text{Bb}}(t)).
\]
\end{lemma}
\begin{proof}
The proof is completely analogous to that of Lemma \ref{lem:inverselocaltime}.
First we get that, 
almost surely, for all $s\geq 0$ and $i\in\mathbb{N}$,
\begin{equation}\label{eq:tildetauiai}
\tilde{l}(\tilde{x}_i,\tilde{\tau}_i(\tilde{A}_i(s)))=\tilde{l}(\tilde{x}_i,s).
\end{equation}
in the same way we have obtained Lemma \ref{lem:tauiai}. Indeed, the proof of Lemma \ref{lem:tauiai} uses only the joint continuity of the local time, which also holds for $\tilde{l}(\cdot,\cdot)$.
Furthermore, we can repeat verbatim the computations leading to \eqref{eq:identitytorefer} (replacing every instance of $l_i(\cdot,\cdot), \bar{x}_i, l_{\text{Bb}}(\cdot,\cdot),A_i$ by $\tilde{l}_i(\cdot,\cdot), \tilde{x}_i, \tilde{l}_{\text{Bb}}(\cdot,\cdot),\tilde{A}_i$ respectively and using \eqref{eq:tildetauiai} instead of Lemma \ref{lem:tauiai}) to obtain that, almost surely
\begin{equation}
\begin{aligned}
\tilde{l}^{-1}_i(\tilde{x}_i,\tilde{l}_{\text{Bb}}(\tilde{x}_i,t))=\tilde{A}_i(\inf\{u\geq 0: \tilde{l}(\tilde{x}_i,u)>\tilde{l}(\tilde{x}_i,\tilde{\tau}_{\text{Bb}}(t))\})
\end{aligned}
\end{equation}
for all $t\geq0$. Therefore, it is enough to show that, almost surely
\begin{equation}\label{eq:pleasestop}
\begin{aligned}
\tilde{A}_i(\inf\{u\geq 0: \tilde{l}(\tilde{x}_i,u)>\tilde{l}(\tilde{x}_i,\tilde{\tau}_{\text{Bb}}(t))\})=\tilde{A}_i(\tilde{\tau}_{\text{Bb}}(t))\end{aligned}
\end{equation}
for all $t\geq0$.
Let
$\tilde{\theta}_{\tilde{x}_i}(\tilde{\tau}_{\text{Bb}}(t)):=\inf\{s>\tilde{\tau}_{\text{Bb}}(t):B^{\tilde{\mathcal{F}}}_s=\tilde{x}_i\}$.
As in the proof Lemma \ref{lem:inverselocaltime}, we have that $\tilde{A}_i$ cannot increase between $\tilde{\tau}_{\text{Bb}}(t)$ and $\tilde{\theta}_{\tilde{x}_i}(\tilde{\tau}_{\text{Bb}}(t))$. Therefore, we get that
\begin{equation}\label{eq:pleasestop2}
\tilde{A}_i(\tilde{\tau}_{\text{Bb}}(t))=\tilde{A}_i(\tilde{\theta}_{\tilde{x}_i}(\tilde{\tau}_{\text{Bb}}(t)))
\end{equation}
In the same way we obtained \eqref{eq:newproof}, that is, using the strong Markov property of $B^{\tilde{\mathcal{F}}}$ to deduce that its local time $\tilde{l}(\tilde{x}_i,\cdot)$ increases immediately after the stopping time $\tilde{\theta}_{\tilde{x}_i}(\tilde{\tau}_{\text{Bb}}(t))$, we get that almost surely
\begin{equation}\label{eq:pleasestop3}
\tilde{\theta}_{\tilde{x}_i}(\tilde{\tau}_{\text{Bb}}(t))=\inf\{u\geq 0: \tilde{l}(\tilde{x}_i,u)>\tilde{l}(\tilde{x}_i,\tilde{\tau}_{\text{Bb}}(t))\}
\end{equation}
for all $t\geq0$. Displays \eqref{eq:pleasestop2} and \eqref{eq:pleasestop3} yield \eqref{eq:pleasestop}. This finishes the proof.

\end{proof}

\begin{lemma}\[
(\tilde{B}^{\text{Bb}}_{\tilde{A}_{\text{Bb}}(t)})_{t\geq0}\stackrel{d}{=}(Z^{\IPC}_t)_{t\geq0}.
\]
\end{lemma}
\begin{proof}
The proof is completely analogous to that of Lemma \ref{lem:quid}, with the only difference being that we replace the point process $(\bar{x}_i,\bar{y}_i)$ used to choose the locations and sizes of the branches of the Continuum Random Forest should be replaced by $(\tilde{x}_i,\tilde{y}_i)$. We start by writing
\begin{equation}
\begin{aligned}
\tilde{\tau}_{\text{Bb}}(t)=&\int_{\tilde{\mathcal{F}}}l(x,\tilde{\tau}_{\text{Bb}}(t))\tilde{\mu}(dx)\\
=&\sum_{i\in\N}\int_{\T_i}l(x,\tilde{\tau}_{\text{Bb}}(t))\tilde{y}_i\mu_i(dx)\\
=&\sum_{i\in\N} \tilde{A}_i(\tilde{\tau}_{\text{Bb}}(t)).
\end{aligned}
\end{equation}
Therefore, using Lemma \ref{eq:analogousidentity} we get that, almost surely,
\[\tilde{\tau}_{\text{Bb}}(t)=\sum_{i\in\N}\tilde{l}^{-1}_i(\tilde{x}_i,
\tilde{l}_{\text{Bb}}(\tilde{x}_i,t))\]
for all $t\geq0$.
Recalling that the $S^i$ are independent and distributed according the annealed law of the inverse local time of the Brownian motion on the CRT, in the same way we obtained \eqref{eq:lastformula}, we get that 
\begin{equation}\label{eq:lastformula}
 (\tilde{l}^{-1}_i(\tilde{x}_i,s))_{s\geq0}\stackrel{d}{=}(\tilde{y}_i^{-3/2}S^i(\tilde{y}_i^{\scriptscriptstyle\frac{1}{2}}s))_{s\geq0},
 \end{equation}
 where we regard the inverse local time averaged with respect to the randomness of the branch $\T_i$. Moreover, by Lemma \ref{lem:shame}, we have that the processes $(\tilde{l}^{-1}_i(\tilde{x}_i,s))_{s\geq0},i\in\N$ are independent between them and also independent from $\tilde{l}_{\text{Bb}}(\cdot,\cdot)$, we get that 
\[
(\phi^{\IPC}_t)_{t\geq0}\stackrel{d}{=}(\tilde{\tau}_{\text{Bb}}(t))_{t\geq0}.
\]
 Finally, using again the independence between the family of processes $(\tilde{l}^{-1}_i(\tilde{x}_i,s))_{s\geq0},i\in\N$ and $\tilde{B}^{\text{Bb}}$, guaranteed by Lemma \ref{lem:shame}, we get that $(\tilde{B}^{\text{Bb}},\tilde{\tau}_{\text{Bb}})$ have the same distribution as $(B^+,\phi^{\IPC})$, and, as in the proof of Theorem 
\ref{prop:alternativeexrepssionforziic}, we get that $(\tilde{B}^{\text{Bb}},\tilde{A}_{\text{Bb}})$ have the same distribution as $(B^+,\psi^{\IPC})$. Recalling that $Z^\IPC_t:=B^+_{\psi^{\IPC}_t}$, we have that $Z^{\IPC}_t$ has the same distribution as $\tilde{B}^{\text{Bb}}_{\tilde{A}_{\text{Bb}}(t)}$.
\end{proof}
\begin{proof}[Proof of Theorem \ref{prop:alternativeexrepssionforzipc}]
We can proceed in the same way as in Theorem \ref{prop:alternativeexrepssionforziic}, the only difference is in the construction of the point process $(\tilde{x}_i,\tilde{y}_i)_{i\in\N}$, but this does not affect the proof.
Let $\tilde{\theta}_{\text{Bb}}(t):=\inf\{s> t: B^{\tilde{\mathcal{F}}}_s\text{ is in the backbone}\}$. 
It is enough to show that
\[\tilde{B}^{\text{Bb}}_{\tilde{A}_{\text{Bb}}(t)}=B^{\tilde{\mathcal{F}}}_{\tilde{\tau}_{\text{Bb}}(\tilde{A}_{\text{Bb}}(t))}=B^{\tilde{\mathcal{F}}}_{\tilde{\theta}_{\text{Bb}}(t)}=\pi(B^{\tilde{\mathcal{F}}}_t),\]
where $\stackrel{d}{=}$ denotes equality in law.
The first equality follows from the definition of $\tilde{B}^{\text{Bb}}$. The last equality is a consequence of the trivial fact that, for a continuous trajectory on a tree, the next hitting point of a subtree coincides with the current projection over that subtree.
 As in the proof of Theorem \ref{prop:alternativeexrepssionforziic}, the second equality
will follow after we have showed that, almost surely,
\begin{equation}
\tilde{\tau}_{\text{Bb}}(\tilde{A}_{\text{Bb}}(t))=\tilde{\theta}_{\text{Bb}}(t)\quad \forall t\geq 0,
\end{equation}
but the proof of this identity is completely analogous to that of \eqref{eq:third}.
\end{proof}

  We would like to finish this section mentioning some results that are related to the convergence of the IIC to $\mathcal{F}$ and the IPC to $\tilde{\mathcal{F}}$. In \cite{AngelGoodmanMerle2013} the scaling limit of the search-depth processes of IIC and the IPC are identified. The trees $\mathcal{F}$ and $\tilde{\mathcal{F}}$ should be obtained from the limiting search depth processes in the same way that the CRT is obtained from the normalized Brownian excursion.

\section{Finite versions of the SSBM and their link to the Brownian motion on the CRT}\label{sect_finite_SSBM}

In this section, we will define the Brownian motion on the $K$-reduced tree of a CRT and then propose an alternative construction of this process as an SSBM.

\subsection{The Brownian motion projected onto the $K$-reduced tree of a CRT}
\label{sect_BCRT}

Proving convergence towards the Brownian motion (of some relevant discrete model) on the CRT is an important problem with applications in the study of the simple random walk on critical  trees and critical graphs in $\Z^d$ in high dimensions such as critical percolation, lattice trees, critical branching random walks among other models.

A natural approach for proving this convergence is to show a finite dimensional version of it along with some tightness. More precisely, one defines a reduced tree on the discrete model and study the scaling properties of the random walk projected onto the reduced sub-tree. 
Here is where extensions of the SSBM on finite trees are useful because, as we will see, they can represent the continuous analogous of the process above, i.e., the BM on the CRT projected to the backbone.

Consider the CRT $\mathfrak{T}$, which comes with a uniform measure. We can chose $K$ random uniform points and build from those points the $K$-reduced tree $\mathfrak{T}^{(K)}$ as in \eqref{eq:defofreduced}. Let us then define $\pi_{\mathfrak{T}^{(K)}}:\mathfrak{T}\to\mathfrak{T}^{(K)}$ the projection onto $\mathfrak{T}^{(K)}$. That is, for any $x\in\mathfrak{T}$, $\pi_{\mathfrak{T}^{(K)}}(x)$ is the point in $\mathfrak{T}^{(K)}$ which is the closest to $x$ according to the natural distance on $\mathfrak{T}$. 

Using the notation $B^{\mathfrak{T}}$ for the Brownian motion on the CRT, we denote 
\[
Z^{K\text{-crt}}_t:=\pi_{\mathfrak{T}^{(K)}}(B^{\mathfrak{T}}_t).
\]

This yields a stochastic process on the  finite tree $\mathfrak{T}^{(K)}$. One of the central ideas in \cite{rwrt} to prove convergence to the BM on the CRT was to approximate $B^{\text{CRT}}$ by $B^{K\text{-crt}}$ ($K$ large).

 In \S \ref{eq:aftermuerto}, we will show how to express  $B^{K\text{-crt}}$ as an SSBM on $\mathfrak{T}^{(K)}$ (More precisely, as an SSBM where the Poisson point process $(x_i,y_i)_{i\in\N}$ is conditioned on $\sum_{i\in\N} y_i=1$).
 
\subsection{The Brownian motion projected onto the $K$-reduced tree of the CRT as a finite SSBM}

One of the points that we would like to stress with this construction is that the relation between SSBMs and the BM on the CRT is twofold. One one hand, as we have anticipated, the projection of the BM on the reduced sub-trees is an SSMB.  On the other hand, the BM on the CRT can be seen as a limit of SSBMs on reduced sub-trees, which can be built independently from the CRT through the so called \emph{line breaking construction}.
    
\subsubsection{The line-breaking construction}

Next, we recall an alternative construction of $\mathfrak{T}^{(K)}$ introduced by Aldous in \cite{AldousCRT1}. This construction can be relevant in practice because it shows that the $K$-CRT (and hence the Brownian motion on the CRT) can be constructed in a relatively elementary manner that does not require a full description of the CRT itself.

Let $(C_1,C_2,\ldots)$ be the times of and inhomogeneous Poisson process on $(0,\infty)$ with rate $r(t)=t$. Let $\mathcal{R}(1)$ consist of an edge of length $C_1$ from a root to the leaf $1$. Then, inductively we can obtain $\mathcal{R}(k+1)$ from $\mathcal{R}(k)$ by attaching an edge of length $C_{k+1}-C_k$ to a uniform random point of $\mathcal{R}(k)$.

It is known (see the proof of Lemma 21, and the paragraph following Corollary 22 in \cite{AldousCRT3}) that this construction yields a tree that has the same law as $\mathfrak{T}^{(K)}$.

\subsubsection{Construction of $\mathfrak{T}$ from $\mathfrak{T}^{(K)}$}\label{sect_this_sucks}

In this section, we show how to build the CRT from $\mathfrak{T}^{(K)}$ and a Brownian bridge conditioned on local time.

Let us explain how to attach branches to $\mathfrak{T}^{(K)}$ to get $\mathfrak{T}$. The branches that hang off $\mathfrak{T}^{(K)}$ (i.e., the connected components of $\mathfrak{T}\setminus\mathfrak{T}^{(K)}$) are a countable collection of (scaled) CRTs which are independent except for the fact that their total volume is conditioned to be $1$ (because $\mathfrak{T}$ has total volume $1$ and $\mathfrak{T}^{(K)}$ has zero volume).
To construct such sequence of branches, we will use $(\overline{B}^{C_K}_t)_{t\in [0,1]}$ a reflected Brownian bridge reaching $0$ at time $1$ conditioned on having total local time at $0$ equal to $C_K$ (where we recall that $C_K$ is the total length of $\mathfrak{T}^{(K)}$). This stochastic process is chosen independently of the random variables of the previous section. For a rigorous definition of the reflected Brownian bridge conditioned on local time  we refer to \cite{pitman1999sde} and \cite{chassaing2001vervaat}. We denote by $(\overline{L}^{C_K}_t)_{t\in[0,1]}$ the local time at the origin of $\overline{B}^{C_K}$.

Next, we will decompose the Brownian bridge through excursions.
Let $(d_i)_{i\in\N}$ be an enumeration of the discontinuities of the inverse local time $(\overline{L}^{C_K})^{-1}$ (which will range from $0$ to $C_K$) and $\overline{e}_i$ the corresponding excursions, i.e., the function defined for  $t\in[0,(\overline{L}^{C_K})^{-1}_{d_i}-(\overline{L}^{C_K})^{-1}_{d_i^-}]$ which is equal to $ \overline{B}^{C_K}_{(\overline{L}^{C_K})^{-1}_{d_i^-}+t}$.

Using the procedure to construct trees from excursions (see Section \ref{s:crt}) we can use the $\overline{e}_i$ to construct scaled CRTs: The construction in \S \ref{s:crt} can be generalized for excursions whose durations are different from $1$. This yields trees with volume different from $1$. Therefore, the real trees $(\mathfrak{T}_{\overline{e}_i})_{i\in\N}$ constructed from $\overline{e}_i$ are scaled CRTs instead of the usual, normalized CRTs.  
The $(\mathfrak{T}_{\overline{e}_i})_{i\in\N}$ is the desired sequence of (scaled) CRTs conditioned in that their volumes sum up to $1$.                          
 
 The next step is to identify where in $\mathfrak{T}^{(K)}$ are the branches $\mathfrak{T}_{\overline{e}_i}$.
 It is clear that we can parametrize the tree $\mathfrak{T}^{(K)}$ (using its length) by a function $\rm{F}_{\mathfrak{T}^{(K)}}: [0,C_K] \to \mathfrak{T}^{(K)}$. Recall that $(d_i)_{i\in\mathbb{N}}$ are the points of discontinuity of the inverse local time of $\overline{B}^{C_K}$. We will show that the point where we attach the branch $\mathfrak{T}_{\overline{e}_i}$ is $\overline{x}_i={\rm F}_{\mathfrak{T}^{(K)}}(d_i)$. 
       
Consider the tree $\bar{\mathfrak{T}}$ obtained from $\mathfrak{T}^{(K)}$ by attaching the trees $(\mathfrak{T}_{\overline{e}_i})_{i\in\N}$ to the points $(\overline{x}_i)_{i\in\N}$. The following lemma states that $\bar{\mathfrak{T}}$ has the same distribution as the CRT.
More precisely we will show that the pair $(\mathfrak{T},\mathfrak{T}^{(K)})$ has the same distribution as $(\overline{\mathfrak{T}},\mathfrak{T}^{(K)})$. In particular, the distribution of $\overline{\mathfrak{T}}$ is the same as that of $\mathfrak{T}$ even when conditioned on $\mathfrak{T}^{(K)}$.
\begin{lemma}\label{lem:treerepresentation}
For each $K\in\N$, the pair $(\bar{\mathfrak{T}}, \mathfrak{T}^{(K)})$ has the same distribution as $(\frak{T},\mathfrak{T}^{(K)})$.
\end{lemma}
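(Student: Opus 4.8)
The plan is to condition on the reduced tree $\mathfrak{T}^{(K)}$ on both sides of the claimed identity and to check that the conditional laws of the \emph{decoration} of the skeleton --- the countable family of subtrees hanging off $\mathfrak{T}^{(K)}$ together with their attachment points --- agree. On the side of $\mathfrak{T}$, the subtree $\mathfrak{T}^{(K)}$ is spanned by $K$ uniform points, and a normalized Brownian excursion conditioned on the reduced tree spanned by $K$ uniform times admits a classical spinal decomposition (a $K$--point version of the Bismut/Williams decomposition underlying Aldous' line--breaking construction, see \cite{AldousCRT1,AldousCRT3}): conditionally on $\mathfrak{T}^{(K)}$, the CRT $\mathfrak{T}$ is obtained by grafting onto $\mathfrak{T}^{(K)}$ a countable family of rescaled CRT's, the $i$--th one of mass $m_i$, at points of the skeleton which --- for fixed masses --- are placed along $\mathfrak{T}^{(K)}$ according to its length (Hausdorff) measure, the whole configuration being conditioned on $\sum_i m_i=1$ (this constraint comes from $\mu_{\mathfrak{T}}(\mathfrak{T})=1$ together with $\mu_{\mathfrak{T}}(\mathfrak{T}^{(K)})=0$).

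On the side of $\bar{\mathfrak{T}}$, the object grafted onto $\mathfrak{T}^{(K)}$ is, by construction, the family $(\mathfrak{T}_{\overline{e}_i})_{i\in\N}$ of scaled CRT's built from the excursions $\overline{e}_i$ of the reflected Brownian bridge $\overline{B}^{C_K}$ conditioned on having total local time $C_K$, attached at $\overline{x}_i={\rm F}_{\mathfrak{T}^{(K)}}(d_i)$. Using the excursion decomposition of $\overline{B}^{C_K}$, made rigorous in \cite{pitman1999sde,chassaing2001vervaat}, the marked excursions $(d_i,\overline{e}_i)_{i\in\N}$ form a Poisson point process on $[0,C_K]$ governed by It\^o's excursion measure for reflected Brownian motion, conditioned on the total excursion duration being $1$; the durations of the $\overline{e}_i$ are exactly the masses $m_i$ of the trees $\mathfrak{T}_{\overline{e}_i}$ (by the generalization to arbitrary--duration excursions of the construction of Section~\ref{s:crt}), and a tree built from an excursion of duration $m$ is a standard CRT rescaled by $m$; so, conditionally on locations and masses, the $\mathfrak{T}_{\overline{e}_i}$ are independent rescaled CRT's.

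The two descriptions are then matched by identifying the length parametrization ${\rm F}_{\mathfrak{T}^{(K)}}:[0,C_K]\to\mathfrak{T}^{(K)}$ with the local--time clock of $\overline{B}^{C_K}$: placing a subtree at ${\rm F}_{\mathfrak{T}^{(K)}}(d_i)$, with $d_i$ ranging over $[0,C_K]$ uniformly--in--local--time, is the same as placing it at a point of $\mathfrak{T}^{(K)}$ chosen according to length measure, and the mass laws agree because both arise as the same $s^{-3/2}$--type conditioned It\^o family. Since $\mathfrak{T}^{(K)}$ has the law of the $K$--reduced subtree in both constructions (the line--breaking theorem \cite{AldousCRT3}, recalled before the statement), combining these identifications yields $(\bar{\mathfrak{T}},\mathfrak{T}^{(K)})\stackrel{d}{=}(\mathfrak{T},\mathfrak{T}^{(K)})$.

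The step I expect to be the main obstacle is making the matching of these two decorations fully rigorous in the presence of conditioning on null events (the reduced tree on one side, the total local time of the bridge on the other). One clean route is to prove directly the $K$--point spinal decomposition of the normalized Brownian excursion conditioned on $\mathfrak{T}^{(K)}$ --- classical for $K=1$ and obtainable for general $K$ by iterating the decomposition along the edges of $\mathfrak{T}^{(K)}$ --- and then to read off the correspondence between arc length along an edge and local time accumulated by the bridge. An alternative, should the conditioned--bridge manipulations prove delicate, is to argue by discrete approximation: replace $\mathfrak{T}$ by a critical Galton--Watson tree conditioned on its size, where the analogous statement about the subtrees hanging off a discrete reduced tree (attached via the excursions of the simple random walk away from the skeleton) is elementary, and pass to the limit using the convergence of rescaled trees and of local times established in Section~\ref{sect_local_time}.
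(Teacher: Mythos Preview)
Your proposal is sound, and you have in fact identified two viable routes. Your \emph{primary} approach --- the direct continuous argument via a $K$-point spinal decomposition of the normalized excursion together with the It\^o excursion decomposition of the conditioned reflected bridge from \cite{pitman1999sde,chassaing2001vervaat} --- is correct in outline and is genuinely different from what the paper does. The paper instead follows precisely your \emph{alternative} route: it works with a critical Poisson Galton--Watson tree $\mathcal{T}_n$ conditioned on size $n$ (equivalently, a uniform labelled tree), passes to the discrete $K$-skeleton, observes that conditionally on the skeleton the hanging subtrees are independent uniform trees conditioned on their total size, shows via \cite{liggett1968invariance} that the cumulative branch-mass process converges to a $\tfrac12$-stable subordinator conditioned to hit $1$ at time $C_K$, and then invokes \cite[Lemma~4.10]{pitman2006combinatorial} to identify this with the inverse local time of the conditioned reflected bridge.

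The trade-off is roughly as you diagnose it. Your direct approach is conceptually cleaner and stays entirely in the continuum, but the conditioning on null events (both the reduced tree and the bridge's total local time) requires care --- in particular, a fully rigorous $K$-point spinal decomposition of the CRT conditioned on $\mathfrak{T}^{(K)}$ is not something one finds stated off the shelf in exactly this form, so you would need to assemble it from the $K=1$ case by induction along edges. The paper's discrete route sidesteps these regular--conditional--law subtleties entirely: the finite-tree statement is elementary combinatorics (uniform trees decompose nicely over any skeleton), and the passage to the limit is handled by existing convergence results for search-depth processes and for conditioned sums of heavy-tailed variables. The cost is that one must track several scaling limits simultaneously and appeal to an external identification (Pitman's lemma) at the end.
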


\begin{proof}

We will rely on a particular discrete model that converges to the CRT.
Let us consider a critical Poisson Galton-Watson tree $\mathcal{T}_n$, conditioned to have size $n$. This is known to be the uniform tree on $n$ vertices (see Section 2 in \cite{AldousCRT1}). 

 Let $U_1,\ldots, U_K$ be uniform points in $[0,1]$. Let $\tilde{h}_n$ be the depth-first search around $\mathcal{T}_n$ and $\gamma$ defined as in \eqref{eq:defofgamma}. Then we have that $(V_i)_{i=1,\dots,K}:=(\tilde{h}_n(\gamma(U_i)))_{i=1,\dots,K}$ are i.i.d.~and uniform in the vertices of $\mathcal{T}_n$.

We can construct the $K$-skeleton associated with $\mathcal{T}_n$ and $V_1,\ldots, V_K$ that we denote by $\mathcal{T}_n^K$. This skeleton is composed of a shape $\tilde{\mathcal{T}}_n^K$ (which, for high $n$, will have $2K-1$ edges) and lengths associated to those edges of $\tilde{\mathcal{T}}_n^K$ which we denote $l_1^n,\ldots, l^n_{2K-1}$.
Let $({\bf e}_t)_{0\leq t \leq 1}$ be a normalized Brownian excursion independent of $(U_i)_{i=1,\dots,K}$ and 
$\mathfrak{T}^{(K)}$ be the reduced subtree $\mathcal{T}_{2{\bf e}}(V_1,\dots,V_K)$. It is not hard to see that $\mathcal{T}^{(K)}$ has $2K-1$ edges whose lengths we denote as $l_1,\dots,l_{2K-1}$.
 By the convergence results in \cite{AldousCRT3} (more specifically, Theorem 23 together with Theorem 15), we know that  $\tilde{\mathcal{T}}_n^K$ converges in distribution to the shape of $\mathcal{T}^{(K)}$ and $\frac{l_1^n}{\sqrt n},\ldots, \frac{l^n_{2K-1}}{\sqrt n}$ converges in distribution to $l_1,\ldots, l_{2K-1}$. 
 
 By virtue of the Skorohod representation theorem we can (and will) assume that the convergence stated above holds almost surely instead of in distribution.

 Let $v_1,\ldots, v_{L_n}$ the vertices of $\mathcal{T}^{K}_n$, where $L_n=\sum_{j=1}^{2K-1} l_j^{n}$. Since we are working with a Poisson Galton-Watson tree conditioned on $n$ vertices (which, as we have said, corresponds to a uniform random tree on $n$ vertices), we can see that conditionally on $\tilde{\mathcal{T}}_n^K$ and the lengths of the edges  $l_1^n,\ldots, l^n_{2K-1}$, the entire corresponding Galton-Watson tree is obtained by adding  to each vertex $v_i$ of  the skeleton independent Poisson-Galton-Watson trees $\mathcal{T}_n^{v_i}$ conditioned to have total size $n$. More precisely, conditionally on $\tilde{\mathcal{T}}_n^K$ and $l_1^n,\ldots, l^n_{2K-1}$, the tuple of branches $(\mathcal{T}_n^{v_i})_{v_i\in V(\tilde{\mathcal{T}}_n^{v_i})}$ is distributed uniformly on the (finite) set of tuples of trees that have total cardinality equal to $n$.
 
 Therefore, the process
 \[H_n(i):=\sum_{j=1}^{i}|\mathcal{T}_n^{v_j} |\]
 is a sum of independent random variables (each of which is distributed as the cardinality of a critical  Poisson-Galton-Watson tree) conditioned on having total sum equal to $n$. Moreover, by Lemma \ref{lem:assumptionhtforiic}, those random variables are in the domain of attraction of an $1/2$-stable law. Therefore, from Theorem 4 of \cite{liggett1968invariance}, we get that 
 \begin{equation}\label{eq:jumpcvg}
 (n^{-1}H_n(\lfloor n^{1/2} t\rfloor))_{t\in[0,n^{-1/2}L_n]}\stackrel{d}{\to} (S^{\frac{1}{2}}(t))_{t\in[0,C_K]}
 \end{equation}
as $n\to\infty$, where $S^{\frac{1}{2}}$ is a $\frac{1}{2}$-stable subordinator conditioned on $S^{\frac{1}{2}}(C_K)=1$.

 Since the local time at the origin of a Brownian motion is the inverse of a $\frac{1}{2}$-stable subordinator, we get that 
 \begin{equation}\label{localsub}
( (\overline{L}^{C_K})^{-1})_{t\in[0,1]}\stackrel{d}{=}((S^{\frac{1}{2}})^{-1}(t))_{t\in[0,1]}
 \end{equation} as $n\to\infty$, where $(S^{\frac{1}{2}})^{-1}(t):=\inf\{s:S^{\frac{1}{2}}(s)> t\}$. 
  
 On the other hand, since the law of $\T_n$ is uniform on the trees with $n$ vertices, given $\T_n^K$ and the sizes of the branches $|\T_n^{v_i}|_{i=1,\dots,L_n}$, the trees $(\T_n^{v_i})_{i=1,\dots,L_n}$ are independent uniform trees conditioned on the sizes $|\T_n^{v_i}|_{i=1,\dots,L_n}$. Therefore, recalling that a uniform tree on $m$ vertices scales to the CRT as $m\to\infty$, the sequence of branches converges to a sequence of scaled CRTs, where the scaling factors are given by the sizes of the jumps of $S^{\frac{1}{2}}$. More precisely, let $(d_i)_{i\in\mathbb{N}}$ be an enumeration of the discontinuities of $S_{1/2}$. Let $s_i=S_{1/2}(d_i)-S_{1/2}(d_i-)$ be the size of the $i$-th jump. Then, it is not hard to see that 
 \begin{equation}\label{eq:cvgtrees}(\T_n^{v_i})_{i=1,\dots,L_n}\stackrel{d}{\to} (\T_i)_{i\in\N}.
 \end{equation} as $n\to\infty$, where $(\T_i)_{i\in\N}$ is a sequence of independent CRTs with distances scaled by $(\sqrt{s_i})_{i\in\N}$ and volumes scaled by $(s_i)_{i\in\N}$. 
  
Furthermore, since the location of the jumps of $H_n$ records the location of the branches that hang off $\T_n^K$, displays \eqref{eq:jumpcvg} and \eqref{eq:cvgtrees} imply that $\T_n$ converges to a tree constructed by $\mathfrak{T}^{(K)}$ by attaching independent CRTs scaled by factors $(s_i)_{i\in\N}$ and located at positions $(F_K(d_i))_{i\in\N}$. 

Finally, Lemma 4.10 in \cite{pitman2006combinatorial} states that conditionally given the interval partition generated by its zero set, the
excursion of the Brownian bridge $\overline{B}^{C_K}$ over each interval of length $t$ is distributed as a Brownian excursion of length $t$, independently for the different intervals.
Therefore, taking into account \eqref{localsub}  and \eqref{eq:cvgtrees} by  we get that the description above coincides with that of the claim of the Lemma (in terms of the conditioned Brownian bridge).
\end{proof}

\subsection{An SSBM on a finite tree used to approximate the Brownian motion on the CRT}\label{eq:aftermuerto}

In this section we explain how to build an SSBM whose law is that of a Brownian motion projected on the $K$-reduced tree of a CRT. 
We need, three elements: a random tree, a law on subordinators and the law of a measure on our random tree. The tree will be given by $\mathfrak{T}^{(K)}$ obtained from the line-breaking construction.

 For any fixed realization of the CRT, the inverse local time at the root of the Brownian motion on a CRT is a subordinator. Therefore, under the randomness of the CRT, the inverse local time becomes a random subordinator. Let us denote $\overline{\mathbb{F}}$ the law of its (random) Laplace exponent.
 
  Finally, by the construction in Section~\ref{sect_this_sucks}, we know that we can define a $1/2$-stable Poisson point process on $\mathfrak{T}^{(K)}$ conditioned on having total volume 1  by setting $\mu=\sum_{i} \bigl((\overline{L}^{C_K})^{-1}_{d_i}-(\overline{L}^{C_K})^{-1}_{d_i^-}\bigr)\delta_{\overline{x}_i}$. Let us denote $\overline{\mathbb{M}}^{(1/2)}$ its law.

Recall the notations of Section~\ref{Sect_finite_SSBM}.
Let $B^{K\text{-ssbm}}$ be the $\mathfrak{T}^{(K)}$-SSBM corresponding to the laws $\overline{\mathbb{F}}$ and $\overline{\mathbb{M}}^{(1/2)}$ defined above.

  We conjecture that, with the same ideas used for the proof of Theorem \ref{prop:alternativeexrepssionforziic}, it is possible to deduce from Lemma \ref{lem:treerepresentation} that
  \begin{equation}\label{prop_finite_ssbm}
(B^{K\text{-crt}}_t)_{t\geq0}\stackrel{d}{=}(B^{K\text{-ssbm}}_t)_{t\geq0} 
\end{equation}
 for any $K\in\N$.

\begin{remark}\label{rmk:last}
By Proposition 2.2 in \cite{rwrt}, the projection $\pi_{\frak{T}^{(K)}}:\frak{T}\to\frak{T}^{(K)}$ converges uniformly to the identity as $K\to\infty$. In particular, $B^{K\text{-crt}}$ can be made arbitrarily close to $B^{\text{CRT}}$ by choosing $K$ large enough. On the other hand, by \eqref{prop_finite_ssbm}, for each $K\in\N$, $B^{K\text{-crt}}$ is an SSBM on a finite tree. Therefore, $B^{\text{CRT}}$ can be seen as a limit of SSBMs. Those SSBMs characterize $B^{\text{CRT}}$ in the following sense: If there is a process $(W_t)_{t\geq 0}$ taking values in the CRT such that for any $K\in\N$, we have that $(\pi_{\frak{T}^{(K)}}(W_t))_{t\geq0}$ is distributed as $(B^{K\text{-crt}}_t)_{t\geq0}$, then $W$ has the same law as $B^{\text{CRT}}$. This follows from the aforementioned convergence of $\pi_{\frak{T}^{(K)}}$ towards the identity.
\end{remark}

\subsection{Convergence of SSBMs on the CRT}\label{sect_final}
In this section, we discuss informally the relations between the convergence of discrete models towards $B^{\text{CRT}}$ and the convergence of their respective projections to the $K$-reduced sub-trees.
Our interest stems from the fact that, as we have said in \eqref{prop_finite_ssbm}, we conjecture that the projection of $B^{\text{CRT}}$ is an SSBM. We will discuss these relations using the model of Random walks on critical Galton-Watson trees and Random walks on the range of critical branching random walks (in high dimensions).

Let ${\mathcal T}_n$ be a critical Galton Watson tree conditioned on $|{\mathcal T}_n|=n$ and $(X^n_l)_{l\in\N}$ be a simple random walk on ${\mathcal T}_n$. In \cite{rwrt} Croydon showed that $X^n$ converges to $B^{\text{CRT}}$. It can be shown that this convergence implies the convergence of the respective projections onto the reduced sub-trees. Conversely, as explained in Remark \ref{rmk:last}, the convergence of the projections onto the reduced sub-trees implies that $X^n$ scales to $B^{\text{CRT}}$.  

We now pass to describe the model of critical branching random walks in $\mathbb{Z}^d$, $d\in\N$.
Let $\T_n$ be a critical Galton Watson tree conditioned on $|\T_n|=n$. Let $E(\T_n)$ be the set of edges of $\T_n$ and $(L_e)_{e\in E(\T_n)}$ be an i.i.d. sequence distributed uniformly in the $2d$ unitary vectors of $\mathbb{Z}^d$. For any $v\in\T_n$, let $[\text{root},v]$ denote the set of edges in the path from the root to $v$. We define
\[\Phi_n(v):=\sum_{e\in[\text{root},v]} L_e.\]
Here $v$ represents the genealogical label of a particle and $\Phi_n(v)$ its position. The model is that of particles performing branching and jumping with symmetric transition probabilities.

Let us now describe the range of the critical branching random walks. Let $G_n$ be the subgraph of $\mathbb{Z}^d$ induced by the mapping $\Phi_n:\T_n:\to\Z^d$.

Next, consider the $(X^n_k)_{k\geq 0}$ the simple random walk on $G_n$ started at $\boldsymbol{o}$. It was proved in~\cite{simplelabyrinth} that $X^n$ converges for $d$ large, after appropriate rescaling, to an object called the Brownian motion on the ISE. This object can be obtained from the Brownian motion on the CRT by an appropriate isometric embedding into $\Z^d$. By considering the $K$-CRT and embedding it using the same embedding we obtain an object that we call the $K$-ISE.

If we consider the Brownian motion on the ISE projected onto the $K$-ISE then the resulting object is an SSBM (on the CRT) where each segment of the finite tree is embedded using Brownian motions in $\Z^d$. Furthermore, we know that this object appears as the scaling limit of certain finite reduced critical models (this statement is implicit in~\cite{highdimensionallabyrinth}) and converges as $K$ goes to infinity to the Brownian motion on the ISE (also a consequence of~\cite{highdimensionallabyrinth}).

 \vspace{0.5cm}
 
{\bf Acknowledgements} We would like to thank Louigi Addario-Berry for his very valuable input simplifying the proof of Lemma~\ref{lem:treerepresentation} and David Croydon for his useful answers to questions concerning fine properties of the Brownian motion on the CRT and its local time.

 \bibliographystyle{alpha}

\begin{thebibliography}{AGdHS08}

\bibitem[AGdHS08]{ipc}
O.~Angel, J.~Goodman, F.~den Hollander, and G.~Slade.
\newblock Invasion percolation on regular trees.
\newblock {\em Ann. Probab.}, 36(2):420--466, 2008.

\bibitem[AGM13]{AngelGoodmanMerle2013}
O.~Angel, J.~Goodman, and M.~Merle.
\newblock Scaling limit of the invasion percolation cluster on a regular tree.
\newblock {\em Ann. Probab.}, 41(1):229--261, 2013.

\bibitem[Ald91a]{AldousCRT1}
D.~Aldous.
\newblock The continuum random tree. {I}.
\newblock {\em Ann. Probab.}, 19(1):1--28, 1991.

\bibitem[Ald91b]{AldousCRT2}
D.~Aldous.
\newblock The continuum random tree. {II}. {A}n overview.
\newblock In {\em Stochastic analysis ({D}urham, 1990)}, volume 167 of {\em
  London Math. Soc. Lecture Note Ser.}, pages 23--70. Cambridge Univ. Press,
  Cambridge, 1991.

\bibitem[Ald93]{AldousCRT3}
D.~Aldous.
\newblock The continuum random tree. {III}.
\newblock {\em Ann. Probab.}, 21(1):248--289, 1993.

\bibitem[App09]{levy}
D.~Applebaum.
\newblock {\em L{\'e}vy processes and stochastic calculus}, volume 116 of {\em
  Cambridge Studies in Advanced Mathematics}.
\newblock Cambridge University Press, Cambridge, second edition, 2009.

\bibitem[ALW17]{athreya2017invariance}S. Athreya, W. L{\"o}hr and A. Winter.
\newblock Invariance principle for variable speed random walks on trees.
\newblock{\em
  The Annals of Probability}, 45(2):625--667, 2017.
   
\bibitem[BC{\v{C}}R15]{rtrw}
G.~{Ben Arous}, M.~Cabezas, J.~{\v{C}}ern{\`y}, and R.~Royfman.
\newblock Randomly trapped random walks.
\newblock {\em The Annals of Probability}, 43(5):2405--2457, 2015.

\bibitem[BCF16a]{simplelabyrinth}
G{\'e}rard {Ben Arous}, Manuel Cabezas, and Alexander Fribergh.
\newblock Scaling limit for the ant in a simple high-dimensional labyrinth.
\newblock {\em Probability Theory and Related Fields}, 174 (1-2):553--646, 2019.
  
\bibitem[BCF16b]{highdimensionallabyrinth}
G{\'e}rard {Ben Arous}, Manuel Cabezas, and Alexander Fribergh.
\newblock Scaling Limit for the Ant in High-Dimensional Labyrinths.
\newblock {\em Communications on Pure and Applied Mathematics}, 72 (4): 669--763, 2019.



\bibitem[Bil68]{Billingsley68}
P.~Billingsley.
\newblock {\em Convergence of probability measures}.
\newblock John Wiley \& Sons Inc., New York, 1968.

\bibitem[BK06]{BarlowKumagai2006}
M.~Barlow and T.~Kumagai.
\newblock Random walk on the incipient infinite cluster on trees.
\newblock {\em Illinois J. Math.}, 50(1-4):33--65 (electronic), 2006.

\bibitem[CJ01]{chassaing2001vervaat}
P.~Chassaing and S.~Janson.
\newblock A vervaat-like path transformation for the reflected brownian bridge
  conditioned on its local time at 0.
\newblock {\em Annals of probability}, pages 1755--1779, 2001.

\bibitem[Cro08]{rwrt}
D.~Croydon.
\newblock Convergence of simple random walks on random discrete trees to
  {B}rownian motion on the continuum random tree.
\newblock {\em Ann. Inst. Henri Poincar{\'e} Probab. Stat.}, 44(6):987--1019,
  2008.
  
 \bibitem[Cro18]{scaling}
 D.~Croydon
  \newblock Scaling limits of stochastic processes associated with resistance forms.
\newblock {\em Ann. Inst. Henri Poincar{\'e} Probab. Stat.},   54(4):1939--1968,
  2018.
  
\bibitem[Cro12]{Croydon2012}
D.~Croydon.
\newblock Scaling limit for the random walk on the largest connected component
  of the critical random graph.
\newblock {\em Publ. Res. Inst. Math. Sci.}, 48(2):279--338, 2012.

\bibitem[Fel71]{fel71}
W.~Feller.
\newblock {\em An introduction to probability theory and its applications.
  {V}ol. {II}.}
\newblock Second edition. John Wiley \& Sons Inc., New York, 1971.

\bibitem[FOT10]{fukushima2010dirichlet}
Fukushima, M., Oshima, Y. \& Takeda, M.
  \newblock Dirichlet forms and symmetric Markov processes,
  \newblock de Gruyter, 2010.


\bibitem[Kes86]{Kesten}
H.~Kesten.
\newblock Subdiffusive behavior of random walk on a random cluster.
\newblock {\em Ann. Inst. H. Poincar{\'e} Probab. Statist.}, 22(4):425--487,
  1986.

\bibitem[Kig95]{kigami1995harmonic}
J.~Kigami.
\newblock Harmonic calculus on limits of networks and its application to
  dendrites.
\newblock {\em Journal of Functional Analysis}, 128(1):48--86, 1995.

\bibitem[Kre95]{krebs1995brownian}
W.~Krebs.
\newblock Brownian motion on the continuum tree.
\newblock {\em Probability theory and related fields}, 101(3):421--433, 1995.

\bibitem[Lig68]{liggett1968invariance}
Thomas~M Liggett.
\newblock An invariance principle for conditioned sums of independent random
  variables.
\newblock {\em Journal of Mathematics and Mechanics}, 18(6):559, 1968.


\bibitem[MR06]{marcus2006markov}
M. Marcus  and J. Rosen.
	\newblock Markov processes, Gaussian processes, and local times,
	\newblock{Cambridge University Press, 2006}

\bibitem[Pit99]{pitman1999sde}
J.~Pitman.
\newblock The sde solved by local times of a brownian excursion or bridge
  derived from the height profile of a random tree or forest.
\newblock {\em The Annals of Probability}, 27(1):261--283, 1999.

\bibitem[Pit06]{pitman2006combinatorial}
J. Pitman.
\newblock {\em Combinatorial Stochastic Processes: Ecole D'Et{\'e} de
  Probabilit{\'e}s de Saint-Flour XXXII-2002}.
\newblock Springer, 2006.

\bibitem[Whi02]{whi02}
W.~Whitt.
\newblock {\em Stochastic-process limits}.
\newblock Springer Series in Operations Research. Springer-Verlag, New York,
  2002.
\newblock An introduction to stochastic-process limits and their application to
  queues.

\bibitem[WW83]{WilkinsonWillemsen1983}
D.~Wilkinson and J.~F. Willemsen.
\newblock Invasion percolation: a new form of percolation theory.
\newblock {\em J. Phys. A}, 16(14):3365--3376, 1983.

\end{thebibliography}
\def\cprime{$'$}

\end{document}